\newtheorem{theorem}{Theorem}[section]
\newtheorem{lemma}[theorem]{Lemma}
\newtheorem{proposition}[theorem]{Proposition}
\newtheorem{corollary}[theorem]{Corollary}
\theoremstyle{definition}
\newtheorem{definition}[theorem]{Definition}
\theoremstyle{remark}
\newtheorem{remark}[theorem]{Remark}
\numberwithin{equation}{section}
\DeclareMathOperator{\Div}{div}
\DeclareMathOperator{\trig}{Trig}
\newcommand{\uap}{\mathsf{UAP}} 
\newcommand{\e}{\operatorname{e}}  
\newcommand{\im}{\mathrm{i}} 
\newcommand{\N}{\mathbf{N}}  
\newcommand{\Z}{\mathbf{Z}}  
\newcommand{\R}{\mathbf{R}}  
\newcommand{\C}{\mathbf{C}}  
\newcommand{\bes}{{\mathcal{B}}} 
\newcommand{\ste}{{\mathcal{S}}} 
\newcommand{\wie}{{\mathcal{W}}}
\newcommand{\mean}{\mathcal{M}}
\newcommand{\map}{\mathcal{I}}
\newcommand{\omean}{\overline{\mean}}
\newcommand{\loc}{\textrm{loc}}
\newcommand{\uloc}{\textrm{uloc}}
\newcommand{\scal}[1]{\langle #1 \rangle}   
\newcommand{\hatf}{\widehat{f}}
\newcommand{\tildeW}{\widetilde{W}}
\newcommand{\tildeU}{\widetilde{U}}
\newcommand{\uno}{\mathbbm{1}}
\newcommand{\apequiv}{\mathop{\overset{\textrm{\Tiny ap}}{\sim}}}
\newcommand{\lsum}[1][N]{\sideset{}{^{(#1)}}\sum}
\newcommand{\term}[1]{\text{\textcircled{\sf\footnotesize #1}}}
\newcommand{\ot}{\overline{t}} 
\newcommand{\hatw}{\widehat{w}} 
\newcommand{\hatpi}{\widehat{\pi}} 
\begin{document}
  \title{On Leray's problem for almost periodic flows} 
  \author[L. C. Berselli]{Luigi C. Berselli}
    \address{Dipartimento di Matematica Applicata, Universit\`a di
      Pisa, Via F.~Buonarroti 1/c, I-56127 Pisa, Italia}
     \email{\href{mailto:berselli@dma.unipi.it}{berselli@dma.unipi.it}}
    \urladdr{\url{http://users.dma.unipi.it/berselli}}
  \author[M. Romito]{Marco Romito}
    \address{Dipartimento di Matematica, Universit\`a di Firenze,
      Viale Morgagni 67/a, I-50134 Firenze, Italia}
    \email{\href{mailto:romito@math.unifi.it}{romito@math.unifi.it}}
    \urladdr{\url{http://www.math.unifi.it/users/romito}}
  \thanks{This work has been partially supported by the GNAMPA project
    \emph{Modelli aleatorii e computazionali per l'analisi della
      turbolenza generata da pareti ruvide}} 
  \subjclass[2000]{35Q30, 76D03, 35B15}
  \keywords{Almost periodic flux, channel flow, Leray's problem}
  \date{\today}
  \begin{abstract}
    We prove existence and uniqueness for fully-developed
    (Poiseuille-type) flows in semi-infinite cylinders, in the setting
    of (time) almost-periodic functions. In the case of Stepanov
    almost-periodic functions the proof is based on a detailed
    variational analysis of a linear ``inverse'' problem, while in the
    Besicovitch setting the proof follows by a precise analysis in
    wave-numbers.

    Next, we use our results to construct a unique almost periodic
    solution to the so called ``Leray's problem'' concerning 3D fluid
    motion in two semi-infinite cylinders connected by a bounded
    reservoir.  In the case of Stepanov functions we need a natural
    restriction on the size of the flux, while for Besicovitch
    solutions certain limitations on the generalized Fourier
    coefficients are requested.
  \end{abstract}
\maketitle
\tableofcontents
\section{Introduction}
%
We consider the motion of a viscous fluid in semi-infinite cylindrical
pipes, with an assigned (time) almost-periodic flux. The results are
aimed to find solutions of the so-called  ``Leray's problem.''
Moreover, this work can be considered as a intermediate step towards
the analysis of (deterministic) statistical solutions for the flow of
Poiseuille-type, which is the object of our current and ongoing
research. We recall that Leray's problem (which seems to have been
proposed by Leray to Lady\v{z}henskaya~\cite{Lad1959c,Lad1959}) is
that of determining a motion in a region with cylindrical exits,
subject to a given flux, and tending to the Poiseuille solution in
each exit. More precisely, let be given a connected open domain
$O\subset\R^3$ made of a ``reservoir'', a bounded and smooth open set
$O_0$, with two cylindrical exits $O_1$ and $O_2$.
\begin{figure}[ht]
  \begin{tikzpicture}[x=.9mm,y=1mm,line width=0.5pt]
    \fill [path fading=west,pattern color=blue!40!white,pattern=crosshatch dots]%
      (-10, 5) -- (15, 5) -- (15,10) -- (-10,10);
    \fill [path fading=east, pattern color=blue!40!white,pattern=crosshatch dots]%
      (70,15) -- (45,15) -- (45,20) -- (70,20);
     \def\upbasin{(15, 5) to [out=270,in=180] (23, 0) to [out=0,in=270]  (45,15)}
     \def\dnbasin{(45,20) to [out=90,in=0]    (37,25) to [out=180,in=90] (15, 10)}
     \fill [pattern color=blue!40!white,pattern=crosshatch dots] \upbasin -- \dnbasin;
    \draw [path fading=west,dashed] (-10, 5) -- ( 5, 5) (-10,10) -- ( 5,10);
    \draw [color=gray] (15, 5) -- (15,10);
    \draw ( 5, 5) -- (15, 5) ( 5,10) -- (15,10);
    \draw node at (8,7.5) {$O_1$};
    \draw [path fading=east,dashed] (70,15) -- (55,15) (70,20) -- (55,20);
    \draw [color=gray] (45,15) -- (45,20);
    \draw (45,15) -- (55,15) (45,20) -- (55,20);
    \draw node at (52,17.5) {$O_2$};
    \draw \upbasin \dnbasin;
    \draw node at (30,12.5) {$O_0$};
  \end{tikzpicture}
\end{figure}
These two semi-infinite exits (pipes) are described in coordinate systems
directed along the axis as
\[
  O_i=D_i\times \R^+,
\]
where the smooth cross sections $D_i$, $i=1,2$, may be possibly of
different shape and measure. We denote by $z\in \R^+$ the axial
coordinate in both cylinders.  Pioneering results in the stationary
case are those of Lady\v{z}henskaya~\cite{Lad1959b} and
Amick~\cite{Ami1977b}. See also the review in Finn~\cite{Fin1965}. The
extensive literature on the stationary problem is recalled for
instance in~\cite{Gal1994a,Gal1994b} for the linearized and full
Navier-Stokes problem, respectively. More recently the problem of
motion in pipes has also been addressed in the time-evolution case,
see Lady\v{z}henskaya and Solonnikov~\cite{LS1983} and also the
review in Solonnikov~\cite{Sol1992}. In the last decade Beir\~ao da
Veiga~\cite{Bei2005c} and Pileckas~\cite{Pil2006} gave new
contributions to the study of the time-dependent problem with assigned
flux, and the special role of the pressure has been also
emphasized by Galdi and coworkers~\cite{GR2005,GPS2007}.

In~\cite{Bei2005c} Leray's problem has been considered in the context
of time periodic flows, especially in view of application to the study
of blood flow and we recall that the role of blood flow in
mathematical research has been put in evidence by
Quarteroni~\cite{Qua2002}.  We also stress that the (non-trivial)
explicit solution introduced by Womersley is periodic, in some sense
generalizes the Poiseuille flow, and has been discovered in the study
of physiological flows. Since the heart is pumping with a flux which
is not periodic, but a superposition of possibly non-rational
frequencies, this suggests also to study the problem in the setting of
almost periodic functions.  This work has been originated by the
inspiring results in~\cite{Bei2005c} and especially from Remark~3
therein: The independence of the various constant on the period of the
flux let the author suggest about the possible extension to almost
periodic solutions. The problem nevertheless requires a precise
functional setting in order to detect the largest class of almost
periodic functions to be employed. Moreover, it seems that the
very-nice proof based on Fourier series in~\cite{Bei2005c} cannot be
directly applied to the new setting and in addition new difficulties
in treating the nonlinearities arise when almost periodic functions
are employed. This leads us to propose two different approaches in two
different functional settings. We finally remark that, in addition to
early results of Foias~\cite{Foi1962}, the approach \textit{via}
almost-periodic functions finds wide applications in fluid mechanics
(see for instance the recent paper by G{\'e}rard-Varet and
Masmoudi~\cite{GVM2010}).
\subsection{Setting of the problem}
The problem we wish to solve is to find a (time) almost-periodic
solution of the Navier-Stokes equations
\begin{equation}
  \label{eq:nse}
  \begin{cases}
    \partial_t u - \nu\Delta u + (u\cdot\nabla)\,u + \nabla p =
    0,&\qquad x\in O,\quad t\in\R,
    \\
    \Div u = 0, &\qquad x\in O,\quad t\in\R,
    \\
    u = 0&\qquad\text{on }\partial O,\quad t\in\R,
  \end{cases}
\end{equation}
such that $u$ converges in both pipes as $|z|\to\infty$ (in a sense we
shall make clear later) to the solution of the Poiseuille-type
problem. For clarity we recall (cf.\cite[\S~2]{Bei2005c}) that by
solution of the Poiseuille-type problem (of fully-developed flow) we
mean a solution of the Navier-Stokes equations such that, in a
reference frame with $z$ directed along the axis of the pipe and
$x:=(x_1,x_2)$ belonging to the orthogonal plane, is of the form
\[
  u(t,x,z) = (0,0,w(t,x))
    \qquad\text{and}\qquad
  p(t,x,z) = \pi(t,x,z)+p_0(t).
\]
Here $p_0(t)$ is an arbitrary function and in addition the flux
condition is satisfied
\[
  \int_{D} w(t,x)\,dx=f(t),
\]
for some given function $f$, where $D$ is the section of
the domain.\par
The Poiseuille-type \textit{ansatz} implies that Navier-Stokes
equations reduce in the semi-infinite pipes $O_i$, $i=1,2$, to the
following equations
\[
  \begin{cases}
    \partial_t w^i - \nu\Delta_x w^i +\partial_z p^i=0,
       &\qquad (x,z)\in O_i,\; t\in\R,
       \\
    \partial_{x_1} p^i=\partial_{x_2} p^i=0
       &\qquad (x,z)\in O_i,\; t\in \R,
       \\
    w^i(t,x) = 0,
       &\qquad x\in\partial O_i,\; t\in \R,
  \end{cases}
\]
where $\Delta_x$ denotes the Laplacian with respect to the variables
$x_1$ and $x_2$. From the first equation it follows that $\partial_z
p^i$ is independent of $z$. From the second equation, we also obtain
that $p^i$ is independent of $x$, hence $p^i(t,x,z)
=-\pi^i(t)\,z+p_0^i(t)$. Since the term $p_0^i(t)$ does not affect the
velocity field, we may assume that the pressure has the form
$p^i(t,z)=-\pi(t)^i\, z$. Moreover, the dependence of $w^i$ on the space
variables $x_1$ and $x_2$ allows us to consider a problem reduced to
the cross section $D_i$ of $O_i$, and the flux condition is
$\int_{D_i}w^i(t,x)\,dx=f(t)$.  This implies that we have to study in
each pipe the following problem (called in the sequel the ``basic
flow''): Find $(w^i(t,x),\pi^i(t))$ such that
\begin{equation}
  \label{eq:basic_flow}
  \begin{cases}
    \partial_t w^i(t,x) - \nu\Delta w^i(t,x) = \pi^i(t),&\qquad x\in
    D_i,\; t\in\R,
    \\
    w^i(t,x)=0&\qquad x\in \partial D_i,\; t\in\R,
    \\
    \int_{D_i} w^i(t,x)\,dx=f(t)&\qquad t\in\R,
  \end{cases}
\end{equation}
showing (under suitable assumptions) that if $f$ is almost periodic,
then the couple $(w,\pi)$ is almost-periodic, too.

We observe that, contrary to the stationary problem where the same
approach gives the well-known Poiseuille solutions, the solution of
the time-dependent motion is more complex for the determination of the
non-constant pressure (Observe that the classical Poiseuille solution
is that obtained for circular pipes, but nevertheless in more general
domains the same approach gives corresponding results). In our problem
for the basic flow we have two scalar unknowns and two equations, but
contrary to the classical problems in fluid mechanics one cannot get
rid of the quantity $\pi$ by means of projection operators. The
problem we have to solve can be considered as an \textit{inverse}
problem. Moreover the problem cannot be treated with the standard
variational tools in a direct way. We can write a single equation (the
``elimination'' of $\pi^i$ is obtained by taking the mean value over
$D_i$) obtaining
 \[
   \begin{cases}
    \partial_t w^i - \nu\Delta w^i +\nu\int_D\Delta w^i= f'(t),
    &\qquad x\in D_i,\; t\in\R, 
    \\
    w^i(t,x) = 0, &\qquad x\in\partial D_i,\; t\in \R,
  \end{cases}
\]
and the latter equation makes easy to understand why some knowledge
also of the derivative of the flux will be needed in order to
solve~\eqref{eq:pipe-global}. Moreover the usual energy-type estimates
obtained by testing with $w^i$, or with $-\Delta w^i$, and with
$w^i_t$ are not-conclusive when applied to this problem. In
particular, the lack of coercivity prevents from a direct application of
the standard techniques employed for parabolic problems,
see~\cite[Sec.~3]{Bei2005c}.  This particular issue has been addressed
in two different ways by Beir\~ao da Veiga~\cite{Bei2005c} (periodic
case) and Pileckas~\cite{Pil2006} (given smooth flux).

Even if we generalize to the almost-periodic setting the periodic
results obtained in~\cite{Bei2005c}, in the first part of the paper we
will mainly follow and suitably adapt the approach
of~\cite{Pil2006}. In particular we give special emphasis to the
solution of~\eqref{eq:basic_flow} since this represents one of the
main technical difficulties. The nonlinear problem is then treated by
means of perturbation arguments in a more or less standard way. We
want also to point out that in the huge literature on almost periodic
solutions we find particularly inspiring (for the choice of Stepanov
functions as suitable for our problem) the paper by Marcati and
Valli~\cite{MV1985} concerning compressible fluids.

In the second part of the paper we consider the problem in the larger
class of Besicovitch almost periodic solutions with an approach which
is more in the spirit of Fourier analysis. We give a different proof
of the existence of the basic flow which also covers the $H^1(\R)$
case and provides an alternative  proof of~\cite[Thm.~1]{Bei2005c},
when restricted to a time-periodic flux. The fully
nonlinear case needs, besides the natural assumption of large
viscosity, an additional assumption of regularity on the flux
(see~\eqref{eq:bes_phi1}) which accounts of the technical difficulties
of this case, due essentially to the non--local (in time) quantities
that are used, see Section~\ref{sec:final_considerations}.

\begin{remark}
  For its variational formulation and the use of energy estimates the
  problem seems to be naturally set in Hilbert spaces and this is not
  well fitting with the classical continuous (Bohr) spaces of almost
  periodic functions. A suitable choice of the spaces represents then
  a fundamental starting point. We are presenting two different proofs
  in two different settings, since they are substantial different and
  the assumptions we make on the flux are of very different nature.
  In the first part we deal with Stepanov a.p. functions, and the
  setting is much similar to the classical variational one for
  evolution partial differential equations.  In the second part we
  deal with Besicovitch a.p. functions and the proof use analysis in
  wave-numbers. We also point out that while the linear problem can be
  also treated in a unified way, for the nonlinear one the differences
  in the functional setting imply special assumptions on the size of
  the flux and on the Fourier coefficients, respectively.
\end{remark}

\bigskip

\noindent\textbf{Plan of the paper:} In Section~\ref{sec:Stepanov} we
consider the problem under the condition of a Stepanov almost periodic
flux. After recalling the main definition we give a complete solution
of Leray's problem, with the \textit{natural} (in space dimension
three) restriction of a large viscosity. As by-product of our
results, we also prove existence in the case of $H^1(\R)$ fluxes. In
Section~\ref{sec:Besicovitch} we consider the problem in the larger
class of Besicovitch almost periodic solutions and we prove existence
for the basic flow, together with existence for the nonlinear problem
under suitable restrictions on the flux.
\section{Leray's problem in the framework of Stepanov
  a.p. functions}\label{sec:Stepanov} 
Here we introduce a functional setting in which it is possible to
extend the result of~\cite{Bei2005c} to almost periodic solutions.
\subsection{Functional setting}
The problem of almost periodic solutions of partial differential
equations has been studied extensively in the last century, starting
with the work of Bohr, Muckenhoupt, Bochner, and Favard and many
others. See the review in Amerio and Prouse~\cite{AP1971},
Besicovitch~\cite{Bes1955}, Corduneanu~\cite{Cor1968}, and Levitan and
Zhikov~\cite{LZ1982}.\par
In the sequel we will use the standard Lebesgue $L^p$ and Sobolev
spaces $H^s=W^{s,2}$. For simplicity we also denote by $\|\,.\,\|$ the
$L^2$-norm. We will use the symbol $C$ to denote a generic
constant, possibly different from line to line, depending on the
domain and not on the viscosity $\nu$ or on the flux $f$. Next, given
a Banach space $(X,\|\,.\,\|_X)$ we denote by $\uap(\R;X)$ the space of
almost periodic functions in the sense of Bohr-Bochner. We recall that a
function $f\in C^0(\R;X)$ is almost periodic if and only if the set of
its translates is relatively compact in the
$C^0(\R;X)$-topology (observe that if $C^0_b(\R;X)$ denotes the space
of continuous bounded functions, then $\uap(\R;X)\subset C^0_b(\R;X)$). In
the context of weak and strong solutions to partial differential
equations it is probably better to work with a more general notion of
almost-periodicity, given for functions $f\in L^p_{\loc}(\R;X)$, which
is suited to deal with distributional solutions.
\begin{definition}[Stepanov $p$-almost periodicity]
  We say that the function $f:\,\R\to X$ is Stepanov $p$-almost
  periodic (denoted by $f\in \ste^p(\R;X)$) if $f\in L^p_{\loc}(\R;X)$
  and if the set of its translates is relatively compact in the
  $L^p_\uloc(\R;X)$ topology defined by the norm
  \[
    \|f\|_{L^p_\uloc(\R,X)}:=\sup_{t\in\R}\left[\int_t^{t+1}\|f(s)\|_X^p\,ds\right]^{1/p}.
  \]
  When $p=2$ we say simply that the function $f:\,\R\to X$ is Stepanov
  almost periodic.
\end{definition}

We will give the main result by using fluxes belonging to this class,
together with their first derivative. However, in the second part of
the paper we will consider also a wider class of almost periodic
functions: functions almost periodic in the sense of
Besicovitch. Further generalities (not needed in this section) on
almost periodic functions are given in
Section~\ref{sec:generalities_ap}.

A first main result that we will prove concerns the existence of the
``basic flow'' problem in this framework.
\begin{theorem}
  \label{thm:basic_flow_Stepanov}
  Let be given a smooth, connected, and bounded open set $D\subset
  \R^2$ and let be given $f$ such that $f,f'\in \ste^2(\R)$.  Then,
  there exists a unique solution $(w,\pi)$ of~\eqref{eq:pipe-global}
  such that
  \[
  \begin{aligned}
    & \Delta w, w_t\in \ste^2(\R;L^2(D)),
    \\
    &\nabla w\in \ste^2(\R;L^2(D))\cap C^0_b(\R;L^2(D)),
    \\
    &\pi\in \ste^2(\R),
  \end{aligned}
  \]
and 
\begin{equation}
  \label{eq:estimate-ulocR}
  \begin{aligned}
    \sup_{t\in\R} \ \Big[\nu\|\nabla w(t)\|^2+ \int_t^{t+1}\big(\nu^2\|\Delta
    w(s)\|^2+\|w_t(s)\|^2+|\pi(s)|^2\big)\,ds \Big]
    \\
    \leq
    C\big(\nu^2+1+\frac{1}{\nu}\big)\|f\|^2_{H^1_\uloc(\R)},
  \end{aligned}
\end{equation}
\end{theorem}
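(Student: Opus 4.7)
The plan is to combine three ingredients: (i) elimination of the scalar unknown $\pi$ by averaging, reducing the basic flow to a single non-local scalar equation for $w$; (ii) solvability of this equation on a finite time interval together with a priori bounds uniform in the interval length and in time; and (iii) a Bochner-type compactness argument to upgrade the bounded solution so obtained to a Stepanov almost periodic one.

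First I would integrate the first equation of~\eqref{eq:basic_flow} over $D$. Since $w=0$ on $\partial D$ and $\int_D w(t,x)\,dx=f(t)$, the pressure is explicitly recovered as $\pi(t)=|D|^{-1}\bigl(f'(t)-\nu\int_D\Delta w(t,x)\,dx\bigr)$, so that the problem reduces to
\[
  \partial_t w - \nu\Delta w + \frac{\nu}{|D|}\int_D\Delta w\,dx = \frac{f'(t)}{|D|},
    \qquad w|_{\partial D}=0,
\]
whose distinctive feature is the non-local correction $\int_D\Delta w\,dx$, which destroys the usual coercivity of the spatial operator. On a finite interval $[-T,T]$ with zero initial datum I would then build a Galerkin approximation on the Dirichlet eigenbasis of $-\Delta$ on $D$; solvability at the approximate level is immediate, and the real work is the derivation of estimates that are independent both of the discretization and of $T$.

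The core of the argument, and the main obstacle, lies in these a priori estimates for the non-coercive equation. Testing with $w$, with $-\Delta w$ and with $w_t$ separately yields only partial information because each identity contains a boundary-like term produced by the non-local correction; however, exploiting the flux identity $\int_D w=f$ one can replace the offending non-local quantities by expressions controlled by $f$ and $f'$, and a carefully weighted linear combination of the three identities produces a local-in-time differential inequality whose coefficients carry the explicit $\nu$-dependence on the right-hand side of~\eqref{eq:estimate-ulocR}. A Gr\"onwall-type lemma adapted to $L^2_\uloc$ data, in the spirit of Marcati--Valli~\cite{MV1985} and Pileckas~\cite{Pil2006}, then converts this into the uniform-in-$t$ bound stated in the theorem, and in particular shows that the bound is independent of $T$ and of the (vanishing) initial datum.

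With the $T$-uniform estimates in hand, passing to the limit $T\to\infty$ by weak-$*$ compactness gives a solution on all of $\R$ in the stated regularity classes, and uniqueness within this class follows by applying the same estimates to the difference of two solutions corresponding to $f\equiv 0$. To promote this unique bounded solution to a Stepanov almost periodic one I would invoke Bochner's criterion: given any sequence of shifts $h_n$, the Stepanov almost periodicity of $f$ and $f'$ provides, after extraction, convergence of $f(\cdot+h_n)$ and $f'(\cdot+h_n)$ in $L^2_\uloc(\R)$; since $w(\cdot+h_n)$ solves the reduced equation with shifted data and the data-to-solution map is linear and continuous in the topologies provided by~\eqref{eq:estimate-ulocR}, $w(\cdot+h_n)$ is Cauchy in $L^2_\uloc(\R;H^1_0(D))$, and Stepanov almost periodicity of $\pi$ follows at once from the averaging formula. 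The delicate part throughout is the loss of coercivity caused by the non-local term, which forces the combined-test-function strategy above rather than a direct use of standard parabolic theory, and which must be handled uniformly in the interval length and compatibly with the $L^2_\uloc$ (rather than $L^2(\R)$) nature of the data.
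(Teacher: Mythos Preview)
Your overall architecture --- reduce to a non-local scalar equation, derive estimates uniform in the interval length, extend to $\R$, then conclude Stepanov almost periodicity via Bochner's compactness criterion --- matches the paper's. The Bochner argument at the end is essentially the paper's contradiction argument recast, and is fine. The gap is in the middle step, the a priori estimate.

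You propose to test with $w$, $-\Delta w$, and $w_t$ and close by a ``carefully weighted linear combination''. The paper warns explicitly in the introduction that these three tests are \emph{not conclusive}, and the reason is structural. With $|D|=1$ and $I:=\int_D\Delta w$, the $-\Delta w$ test gives
\[
  \tfrac12\tfrac{d}{dt}\|\nabla w\|^2 + \nu\bigl(\|\Delta w\|^2 - I^2\bigr) = -f' I,
\]
and since $\|\Delta w\|^2-I^2=\|\Delta w-I\|^2$ vanishes exactly on multiples of the flux carrier $\varphi$ (the solution of $-\nu\Delta\varphi=\uno$), no coercivity is available for the scalar $I$. The $w$-test and the $w_t$-test each bring a further uncontrolled factor of $I$ on the right, and your averaging formula $\pi=f'-\nu I$ only trades one unknown scalar for the other. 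In fact the combination $(w_t\text{-test})+\nu\cdot(-\Delta w\text{-test})$ collapses to the tautology $\|w_t-\nu\Delta w\|^2=|\pi|^2$, so nothing can be extracted from linear combinations alone.

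The missing ingredient is a \emph{fourth} test, against the flux carrier $\varphi$ itself. Integrating by parts and using the flux constraint gives
\[
  (w_t,\varphi) + f = \frac{\chi_0^2}{\nu}\,\pi,
  \qquad \chi_0^2:=\int_D\varphi\,dx>0,
\]
hence $|\pi|^2\leq C\bigl(\|w_t\|^2+\nu^2|f|^2\bigr)$. Inserting this into the $w_t$-test $\|w_t\|^2+\tfrac{\nu}{2}\tfrac{d}{dt}\|\nabla w\|^2=\pi f'$ with a small $\epsilon$ absorbs $\|w_t\|^2$ and closes; the bounds on $\pi$ and $\Delta w$ then follow by substitution. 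At the Galerkin level one uses $P_m\varphi$ and controls the tail $\|P_m\varphi-\varphi\|$ for $m$ large.

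A secondary point: a zero initial datum on $[-T,T]$ violates the compatibility $\int_D w(-T)=f(-T)$, which for an almost periodic $f$ is typically nonzero. The paper takes $w_0=\nu\chi_0^{-2}\varphi\,f(-n)$ on $[-n,\infty)$ precisely to match this, and then uses an Amerio--Prouse sliding-window argument (rather than Gr\"onwall) to convert the local-in-time inequality into the uniform $H^1_{\uloc}$ bound~\eqref{eq:estimate-ulocR}.
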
%
%
%
%
\begin{remark}
  The result concerning the linear problem for the basic flow holds
  true in any space dimension. 
\end{remark}
This allows to obtain in a rather standard way the following result
for the Navier-Stokes equations.
\begin{theorem}
  \label{thm:Leray_Stepanov}
  Let $O$ as in the introduction and let be given $f$ such that
  $f,f'\in \ste^2(\R)$. There exists $\nu_0=\nu_0(f,O)\geq0$ such that
  if $\nu>\nu_0$ there exists a unique solution $u$ of~\eqref{eq:nse}
  such that
  \[
  u\in \ste^2(\R;H^s(O))\quad\text{for all }s<2
\]
and $u$ converges to a Poiseuille-type solution $w^i$ in each pipe, as
$|z|\to+\infty$.
\end{theorem}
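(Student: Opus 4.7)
The plan is to follow the classical Leray reduction: construct a divergence-free lift of the two basic flows into the full domain $O$, reduce~\eqref{eq:nse} to a perturbation problem with homogeneous boundary data and forcing supported in a bounded neighborhood of the reservoir, and solve the resulting equation by a Galerkin/energy argument in the Stepanov almost-periodic setting, exploiting large viscosity to close the cubic nonlinearity in 3D.

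\textbf{Lifting.} First I would apply Theorem~\ref{thm:basic_flow_Stepanov} in each cross section $D_i$, $i=1,2$, with fluxes $\pm f(t)$ (signs chosen so that global incompressibility is compatible), obtaining $(w^i,\pi^i)$. Fix smooth cutoffs $\chi_i(z)$ vanishing near $z=0$ and equal to $1$ for $z$ large, and define a candidate lift equal to $(0,0,\chi_i(z)\,w^i(t,x))$ in $O_i$; this field is not solenoidal, so I would correct it by a Bogovskii-type field $\tilde W$ supported in a bounded neighborhood of $O_0$, so that the resulting $W$ satisfies $\Div W=0$ in $O$, $W|_{\partial O}=0$, and $W=(0,0,w^i)$ far in each exit. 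The estimate~\eqref{eq:estimate-ulocR} combined with standard bounds on the Bogovskii operator then gives $W,\nabla W,\nabla^2 W,\partial_t W\in\ste^2(\R;L^2(O))$ with norms controlled by $\|f\|_{H^1_\uloc(\R)}$.

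\textbf{Perturbation equation and Galerkin.} Substituting $u=W+v$ into~\eqref{eq:nse}, the perturbation $v$ solves
\[
\partial_t v-\nu\Delta v+(v\cdot\nabla)v+(W\cdot\nabla)v+(v\cdot\nabla)W+\nabla q=F,
\]
together with $\Div v=0$, $v|_{\partial O}=0$, and zero flux on each cross section, where $F:=\nu\Delta W-\partial_t W-(W\cdot\nabla)W$ is compactly supported in $O$ because $(0,0,w^i)$ solves~\eqref{eq:basic_flow} exactly inside each pipe. I would solve this by Galerkin on a basis of divergence-free, zero-flux elements of $H^1_0(O)$. Testing with $v$ and with (a regularization of) $-\Delta v$ and invoking the standard 3D trilinear inequality, one can absorb the transport and stretching terms into $\tfrac{\nu}{2}(\|\nabla v\|^2+\|\nabla^2 v\|^2)$ provided $\nu$ exceeds a threshold $\nu_0$ depending polynomially on $\|W\|_{\ste^2(\R;H^2)}$, i.e.\ ultimately on $\|f\|_{H^1_\uloc(\R)}$ and the geometry of $O_0$. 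The resulting bound
\[
\sup_{t\in\R}\Bigl(\|v(t)\|_{H^1}^2+\int_t^{t+1}\bigl(\|v\|_{H^2}^2+\|\partial_t v\|^2\bigr)\,ds\Bigr)\leq C(\nu,f,O)
\]
gives $v\in\ste^2(\R;H^s(O))$ for every $s<2$ by interpolation, once almost-periodicity is established. Uniqueness follows by the standard energy estimate on $v_1-v_2$, again absorbing the quadratic terms thanks to $\nu>\nu_0$.

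\textbf{Almost periodicity, decay at infinity, main obstacle.} The transfer of almost-periodicity from $f$ to $v$ is the delicate point. Given any sequence of shifts $\{\tau_n\}\subset\R$, the translates $f(\cdot+\tau_n)$ are precompact in $L^2_\uloc(\R)$ by assumption, hence so are $F(\cdot+\tau_n)$ and $W(\cdot+\tau_n)$; the uniform bound above and Aubin-Lions then make $v(\cdot+\tau_n)$ precompact in $L^2_\uloc(\R;L^2(O))$, and by uniqueness the limit solves the equation with the limiting data, so the whole family of translates of $v$ is precompact in the Stepanov topology. The convergence $u\to(0,0,w^i)$ as $z\to+\infty$ in $O_i$ reduces to decay of $v$, which I would prove by a Saint-Venant-type argument: defining $\Phi_{t_0}(z):=\int_{t_0}^{t_0+1}\!\int_{D_i\times(z,\infty)}|\nabla v|^2\,dx\,ds$ and writing the energy identity on the slab beyond $z$, one obtains an inequality of the form $\Phi_{t_0}(z)\leq -C\,\Phi_{t_0}'(z)$ uniformly in $t_0$, yielding exponential decay in $z$. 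The hardest step is checking that the a priori estimates, uniqueness, and continuity of the map $F\mapsto v$ are all uniform under time translation — this is precisely what forces the large-viscosity assumption $\nu>\nu_0$, which in 3D cannot be removed by these techniques since it is the only mechanism making the cubic form $((v\cdot\nabla)v,v)$ absorbable into diffusion simultaneously on every unit time-window.
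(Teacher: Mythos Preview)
Your overall reduction---cutoff plus Bogovski{\u\i} lifting, perturbation equation for $v=u-W$ with forcing $F$ supported in the bounded set $O^1$, and large viscosity to close the 3D trilinear estimates---is the same as the paper's. The decay of $v$ in each pipe is handled in the paper more simply than your Saint-Venant argument: once $v\in H^1_0(O)$ with a bound uniform in $t$, the trace on the cross-sections $D_i$ tends to zero in $H^{1/2}$ as $|z|\to\infty$ (the paper just cites~\cite{Bei2005c,Gal1994a} for this).

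There is, however, a genuine gap in your almost-periodicity step. You invoke Aubin--Lions to obtain precompactness of $\{v(\cdot+\tau_n)\}$ in $L^2_\uloc(\R;L^2(O))$, but the domain $O$ is \emph{unbounded}: although the Poincar\'e inequality~\eqref{eq:Poincare} holds, the embedding $H^1_0(O)\hookrightarrow L^2(O)$ is not compact, so Aubin--Lions does not deliver strong convergence in $L^2(O)$. One could try to repair this using uniform spatial decay of $v$, but your outline places compactness before decay and does not link the two. The paper avoids the issue by a different, classical mechanism. It first constructs the solution on all of $\R$ by solving initial-value problems on $[-n,\infty)$ with data $U_n(-n)=-w(-n)$, obtaining $n$-uniform bounds on $\|\nabla U_n\|$ via comparison with an explicit ODE (solved by a Schauder fixed point), and letting $n\to\infty$. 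It then proves an \emph{asymptotic equivalence} estimate: for any two solutions with the same forcing,
\[
\|U_1(t)-U_2(t)\|^2\le\|U_1(t_0)-U_2(t_0)\|^2\,\e^{-\nu C_P(t-t_0)},\qquad t\ge t_0,
\]
which follows from the $L^2$ energy identity on the difference together with $\nu>\nu_0$. Since both solutions are uniformly bounded on $\R$, letting $t_0\to-\infty$ forces $U_1\equiv U_2$. Combined with the Foias--Zaidman contradiction argument already used for Theorem~\ref{thm:basic_flow_Stepanov}, this yields Stepanov almost periodicity of $U$ in $L^2(O)$ without any spatial compactness; the upgrade to $\ste^2(\R;H^s)$ for $s<2$ then follows by interpolation exactly as you indicate.
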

 \begin{remark}
   The restriction on the viscosity is not surprising and is common to
   several results concerning the three-dimensional Navier-Stokes
   equations.  This is also observed in~\cite[Ch.~XI]{Gal1994b} since
   the existence of a flux carrier that can be absorbed by the
   dissipation for any positive viscosity is generally not known for
   cylindrical domains. This imposes (also in the stationary case)
   limitations on the size of the flux, in terms of the viscosity.

   We observe that also in the time-periodic case~\cite{Bei2005c}
   largeness (in terms of data of the problem) of the viscosity is
   required. Nevertheless, the results in~\cite{Bei2005c} concern weak
   solutions and uniqueness is not stated. On the other hand
   in~\cite{Pil2006} there is no restriction on the viscosity, since
   special ``\textit{two-dimensional-like}'' solutions are considered.
 \end{remark}
%
 \subsection{Construction of the solution of the ``basic
   flow''}\label{sec:basic_Stepanov}
%
In this section we give a detailed analysis of the existence of an
almost periodic basic flow and a complete proof of
Theorem~\ref{thm:basic_flow_Stepanov}. The problem is the following:
given $f,f\in \ste^2(\R)$ find a Stepanov almost periodic solution of
\begin{equation}
  \label{eq:pipe-global}
  \begin{cases}
    \partial_t w - \nu\Delta w = \pi, &\qquad x\in D,\; t\in\R,
    \\
    w(t,x) = 0, &\qquad x\in\partial D,\; t\in \R,
    \\
    \int_D w(t,x)\,dx = f(t) &\qquad t\in\R,
  \end{cases}
\end{equation}
\begin{remark}
  It is easy to check that one can analyze the slightly more general
  problem where~\eqref{eq:pipe-global} is replaced by
  \[
    \begin{cases}
      \partial_t w + \nu Aw = \pi e,	&\qquad t\in\R,
      \\
      \scal{w(t), e}_H = f(t)		&\qquad t\in\R.
    \end{cases}
  \]
  with an unbounded, linear, and with compact inverse operator $A$ on
  the Hilbert space $H$ with domain $D(A)$ and $e\in H$ with $e\not\in
  D(A)$ is given. Under suitable assumptions on $A$, the same
  procedure that we will employ can be used, see also~\cite{Bei2006b}.
  The same remark holds also for the results of
  Section~\ref{sec:Besicovitch}.
\end{remark}
We start by solving the following initial-boundary value problem in the
unknowns $(w,\pi)$,
\begin{equation}
  \label{eq:pipe}
  \begin{cases}
    \partial_t w - \nu\Delta w = \pi,
    &\qquad x\in D,\; t\in]0,T],
    \\
    w(t,x) = 0,
    &\qquad x\in\partial D,\; t\in ]0,T],
    \\
    \int_D w(t,x)\,dx = f(t),
    &\qquad t\in[0,T],
    \\
    w(0,x) = w_0(x),
    &\qquad x\in D.
  \end{cases}
\end{equation}
We follow essentially the same approach of~\cite{Pil2006}, with
additional care on the analysis of the initial datum and on the dependence
of the solution on the various parameters of the problem.
In the sequel we will employ a spectral (spatial) approximation using the
$L^2(D)$-orthonormal eigenfunctions $(e_k)_{k\in\N}$ of the Laplace operator,
\[
\begin{cases}
  -\Delta e_k=\lambda_k e_k
  &\qquad x\in D,
  \\
  e_k=0
  &\qquad x\in \partial D.
\end{cases}
\]
Define
\[
\beta_j:=(\uno,e_j),\qquad j\in\N,
\]
where $(\cdot,\cdot)$ denotes the $L^2(D)$ scalar product and $\uno$
is the function defined on $D$ such that $\uno(x)=1$ a.~e..  Without
loss of generality from now on we assume that $|D|$, the Lebesgue
measure of $D$, is equal to one.  Clearly,
\[
\uno=\sum_{k=1}^\infty \beta_j e_j(x)
\qquad\text{and}\qquad
\sum_{k=1}^\infty \beta_j^2=|D|=1.
\]
A special role is played by the pipe's flux carrier, i.~e., by the function
$\Phi$ (which belongs to $H^2(D)\cap H^1_0(D)$ under the smoothness
assumptions on $D$) defined as solution of the following Poisson problem
\begin{equation*}
  \begin{cases}
    -\Delta \Phi=\uno&\qquad x\in D,
    \\
    \Phi=0&\qquad x\in \partial D.
  \end{cases}
\end{equation*}
We define the following quantities 
\[
  \chi_0^2:=\int_D\Phi\,dx=\int_D|\nabla
  \Phi|^2dx>0\qquad\text{and}\qquad  \eta_0^2:=\int_D|\Phi|^2dx>0, 
\]
which clearly depend only on $D$.  Observe that the function $\Phi$
is enough in the stationary case to construct Poiseuille-type flows,
since in that case the problem for the flux and that for the pressure
completely decouple. On the other hand, in the time-dependent case the
situation if more complex, since both unknown depend also on the time.

To work with in our problem with a general viscosity we need the
scaled version of the flux carrier $\varphi:=\nu^{-1}\Phi$ which solves 
\begin{equation}
  \label{eq:1}
  \begin{cases}
    -\nu\Delta \varphi=\uno&\qquad x\in D,
    \\
    \varphi=0&\qquad x\in \partial D,
  \end{cases}
\end{equation}
and such that
\[
\int_D\varphi\,dx=\nu\int_D|\nabla \varphi|^2dx=
\frac{\chi_0^2}{\nu}>0\qquad\text{and}\qquad
\int_D|\varphi|^2dx=\frac{\eta_0^2}{\nu^2}. 
\]

We start our analysis by proving
the following result.
\begin{proposition}
  \label{prop:local_existence}
  Given $f\in H^1(0,T)$, assume that%
    \footnote{The initial condition here is chosen in such a way that the
      compatibility conditions on the flux at time $t=0$ are satisfied.}
  $w_0(x)=\tfrac{\nu\,\varphi(x)}{\chi_0^2}f(0)$. Then, there exists a
  unique solution $(w,\pi)$ of~\eqref{eq:pipe} such that
  \[
  \begin{aligned}
    &w\in C(0,T;H^1_0(D))\cap H^1(0,T;L^2(D))\cap L^2(0,T;H^2(D)),
    \\
    &\pi\in L^2(0,T),
  \end{aligned}
  \]
  satisfying the following estimate
\begin{equation}
  \label{eq:main_dependence-nu}
  \begin{aligned}
    \nu\|\nabla w (t)\|^2+
    \nu^2\int_0^t\|\Delta w (s)\|^2+\int_0^t\|w _t(s)\|^2\,ds+\int_0^t|\pi (s)|^2\,ds
    \\
    \leq  
    C\int_0^t\big((1+\nu^2)|f(s)|^2+(1+\nu)|f'(s)|^2\big)\,ds,
  \end{aligned}
\end{equation}
  with a constant $C$ depending only on $D$ (and in particular
  independent of $T$).
\end{proposition}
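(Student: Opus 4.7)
The plan is to absorb both the flux constraint and the initial datum through the single substitution $w(t,x) = \frac{\nu f(t)}{\chi_0^2}\varphi(x) + v(t,x)$. Since $\int_D\varphi\,dx = \chi_0^2/\nu$, the flux condition in~\eqref{eq:pipe} becomes $\int_D v(t,x)\,dx = 0$, and the specific choice $w_0 = \frac{\nu\varphi}{\chi_0^2}f(0)$ ensures $v(0)=0$. Using $-\nu\Delta\varphi = \uno$, the pair $(v,\pi)$ must satisfy, pointwise in $D$,
\[
v_t - \nu\Delta v = \pi(t) - \frac{\nu f'(t)}{\chi_0^2}\varphi(x) - \frac{\nu f(t)}{\chi_0^2},
\]
with $v|_{\partial D}=0$ and $v(t)\in V := \{u\in H^1_0(D):\int_D u\,dx=0\}$ for every $t$.

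Testing this identity against any $\psi\in V$ annihilates both $x$-constant contributions, namely $\pi(t)$ and $\nu f(t)/\chi_0^2$, leaving the clean variational problem
\[
(v_t,\psi) + \nu(\nabla v,\nabla\psi) = -\frac{\nu f'(t)}{\chi_0^2}(\varphi,\psi),\qquad \psi\in V,\quad v(0)=0.
\]
The bilinear form is continuous and coercive on $V$ by Poincar\'e, and the right-hand side belongs to $L^2(0,T;V')$ since $f'\in L^2(0,T)$. Standard parabolic theory (Galerkin approximation using the eigenfunctions $e_k$, with the mean-zero constraint enforced on the finite-dimensional spaces, together with Lions-Magenes/Aubin-Lions arguments) produces a unique $v\in L^2(0,T;V)\cap H^1(0,T;L^2(D))\cap C([0,T];L^2(D))$; uniqueness for the original pair $(w,\pi)$ then follows at once by linearity, from the homogeneous problem applied to the difference of two solutions.

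The controlling estimate is obtained by testing with $\psi = v_t$, which legitimately belongs to $V$ because $\int_D v_t\,dx = \frac{d}{dt}\int_D v\,dx = 0$. Using Cauchy-Schwarz and $\|\varphi\|^2 = \eta_0^2/\nu^2$, this yields
\[
\int_0^t\|v_t(s)\|^2\,ds + \nu\|\nabla v(t)\|^2 \le C\int_0^t|f'(s)|^2\,ds.
\]
The genuinely delicate point, and the main obstacle of the proof, is the bound on $\|\Delta v\|$: the Dirichlet Laplacian does not leave $V$ invariant, so inverting it directly in the pointwise PDE leads to a circular estimate. I would sidestep this by projecting the pointwise identity onto mean-zero functions, which erases both $x$-constant terms and gives
\[
\nu\Bigl(\Delta v(x) - \int_D\Delta v\,dy\Bigr) = v_t(x) + \frac{\nu f'(t)}{\chi_0^2}\Bigl(\varphi(x) - \frac{\chi_0^2}{\nu}\Bigr).
\]
This controls the mean-zero part $(\Delta v)^\perp$ in terms of $\|v_t\| + C|f'|$. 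For the remaining one-dimensional degeneracy, i.e.\ the mean $\int_D\Delta v\,dx$, the representation $v = -(\int_D\Delta v\,dx)\Phi - (-\Delta)^{-1}\bigl((\Delta v)^\perp\bigr)$ together with the constraint $\int_D v\,dx = 0$ yields the one-line linear relation $\bigl|\int_D\Delta v\,dx\bigr|\le C\|(\Delta v)^\perp\|$, and hence $\nu^2\|\Delta v\|^2 \le C(\|v_t\|^2 + |f'|^2)$.

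To close the argument, integrate the original equation in $x$ over $D$ to extract the consistency identity $\pi = f' + \frac{\nu f}{\chi_0^2} - \nu\int_D\Delta v\,dx$; combined with the previous estimates this produces $\int_0^t|\pi(s)|^2\,ds \le C\int_0^t(|f'|^2 + \nu^2|f|^2)\,ds$. Translating back from $v$ to $w = \frac{\nu\varphi f}{\chi_0^2} + v$ via the identities $\|\nabla\varphi\|^2 = \chi_0^2/\nu^2$, $\|\varphi\|^2 = \eta_0^2/\nu^2$, and $\Delta\varphi = -1/\nu$ introduces additional $O(|f|^2)$ and $O(|f'|^2)$ contributions in each of the four norms on the left-hand side of~\eqref{eq:main_dependence-nu}, and reassembling yields precisely the claimed estimate with a constant depending only on $D$.
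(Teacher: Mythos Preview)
Your argument is correct and takes a genuinely different route from the paper's. The paper works directly with the coupled pair $(w^m,\pi^m)$: it builds a Galerkin approximation, determines $\pi^m$ as the solution of a Volterra integral equation of the second kind, and then---since the naive bounds on $\pi^m$ blow up with $m$---obtains the decisive uniform estimate by testing the Galerkin system with the projected flux carrier $P_m\varphi$, which yields $\int_0^t|\pi^m|^2\le C\int_0^t(\|w^m_t\|^2+\nu^2|f|^2)$ and allows the remaining estimates to close. Your substitution $w=\frac{\nu f}{\chi_0^2}\varphi+v$ instead decouples the problem: once restricted to the mean-zero subspace $V$, the equation for $v$ is a \emph{standard} coercive parabolic problem with $L^2$-in-time forcing and zero initial data, so existence, uniqueness and the $v_t$--estimate are off the shelf; $\pi$ is then recovered algebraically by reading off the constant-in-$x$ component. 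The only nonstandard step left is the $H^2$ bound, which you handle by splitting $\Delta v$ into its mean and mean-free parts and using $\int_D v=0$ to control the mean---this is the elliptic analogue of the paper's $P_m\varphi$ trick. Your approach is more elementary (no Volterra equation, no delicate choice of test function at the Galerkin level) and makes transparent why the problem is not actually degenerate once the flux carrier is peeled off; the paper's approach, on the other hand, stays closer to the inverse-problem formulation and produces $\pi$ constructively rather than a posteriori.
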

\begin{proof}
  We start by constructing, with the Faedo-Galerkin method, a global
  unique approximate solution in $V_m = \operatorname{Span}\langle
  e_1,\dots,e_m\rangle$.  The first step is to approximate the initial
  condition.  Let $\varphi$ be the function introduced in~\eqref{eq:1}
  and write $\varphi=\sum_{k=1}^\infty\varphi_k e_k$, where the series
  converges in $H^2(D)$ and $\varphi_k=(\varphi,e_k)$. Hence, the
  projection of $\varphi$ over $V_m$ is given by
  \[
    P_m\varphi:=\sum_{k=1}^m\varphi_k e_k.
  \]
  In order to satisfy the flux condition also at time $t=0$ we set
  \[
    w^m(0,x):=\frac{f(0)P_m\varphi(x) }{\int_DP_m\varphi(x)\,dx}.
  \]
  Observe that, for large enough $m\in \N$, the approximate initial
  datum is well-defined. In fact, $P_m\varphi\to\varphi$ in $L^2(D)$
  and since $|D|<+\infty$ then $P_m\varphi\to\varphi$ in
  $L^1(D)$. Since $\int_D P_m \varphi\to \int_D
  \varphi=\nu^{-1}\chi_0^2>0$, there exists $m_0\in\N$ such that
  $\int_D P_m\varphi\not=0$ for all $m\geq m_0$. Moreover,
  $w^m(0,x)\to w_0(x)$ in $H^2(D)$, as $m\to+\infty$.

  We write Galerkin approximate functions
  \[
    w^m(t,x)=\sum_{k=1}^m c ^m_k(t)\e_k(x),
  \]
 and  we look for a couple $(w^m,\pi^m)$ such that
  \begin{equation}
    \label{eq:galerkin}
    \frac{d}{dt}(w^m,e_j)
      +\nu (\nabla w^m,\nabla e_j)
    = \pi^m(\uno,e_j)
    \qquad \text{for }j=1,\dots,m,
  \end{equation}
  and $\pi^m:(0,T)\to(0,T)$ chosen so that the flux condition
  \begin{equation}
    \label{eq:approximate_mean}
    \int_D w^m(t,x)\,dx = f(t)
    \qquad \forall\,t\in(0,T),
  \end{equation}
  is satisfied. The equality is meaningful since $f$ is a.~e.~equal to
  a continuous function.  In terms of Galerkin coefficients
  $(c_j^m)_{1\leq j\leq m}$ we have for the initial condition that
  \[
  c_j^m(0):=f(0)\frac{\varphi_j}{\sum_{j=0}^m \varphi_k \beta_k},
  \]
  while the system of ordinary differential equations reads as
  \[
    \frac{d}{dt} c_j^m(t)
      + \nu\lambda_j c_j^m(t)
    = \pi^m(t)\beta_j
    \qquad     \text{for } j=1,\dots,m,
  \]
  and the solution can be written as follows 
  \[
    c_j^m(t)
     =    c_j^m(0)\e^{-\nu\lambda_j t}
       + \beta_j\int_0^t \pi^m(s)\e^{-\nu\lambda_j (t-s)}\,ds
    \qquad      \text{for }j=1,\dots,m.
  \]
  To find the equation satisfied by $\pi^m$ we multiply the
  latter equality by $\beta_j$ and sum over $j=1,\dots,m$ to get
  \[
    f(t)=\sum_{j=0}^m \beta_j c_j^m(0)\e^{-\nu\lambda_j t}+
    \sum_{j=0}^m \beta_j^2\int_0^t \pi^m(s)\e^{-\nu\lambda_j(t-s)}\,ds.
  \]
  Finally, to obtain an integral equation for $\pi^m$, we
  differentiate with respect to  time deducing 
  \[
    f'(t)=-\nu\sum_{j=0}^m \lambda_j \beta_j c_j^m(0)\e^{-\nu\lambda_j t}+
    \sum_{j=0}^m \Big(\beta_j^2\pi^m(t)-\nu\lambda_j\int_0^t
    \beta_j^2\pi^m(s)\e^{-\nu\lambda_j(t-s)}\,ds\Big),
  \]
  and this yields the following \textit{Volterra integral equation of the second type}
\begin{equation}
    \label{eq:Volterra}
    \begin{aligned}
      \pi^m(t)-\nu\int_0^t\sum_{j=1}^m\frac{\lambda_j
        \beta^2_j}{|\beta|^2}\e^{\nu\lambda_j(t-s)}\pi^m(s)\,ds=\frac{1}{|\beta|^2}
      f'(t)+f(0)\frac{\sum_{j=1}^m\beta_j \varphi_j\e^{-\nu\lambda_j
          t}}{\sum_{k=1}^m\beta_k \varphi_k},
    \end{aligned}
  \end{equation}
  where $|\beta|^2:=\sum_{k=1}^m \beta_m^2$. For any fixed $m\in \N$
  the kernel of the integral equation~\eqref{eq:Volterra} is bounded
  for all $0\leq s\leq t$.  This is enough to infer that if $f\in
  H^1(0,T)$, then there exists a unique $\pi^m\in L^2(0,T)$ satisfying
  the integral equation~\eqref{eq:Volterra} and such that
  \[
    \|\pi^m\|_{L^2(0,T)}\leq C_m(\nu)\|f\|_{H^1(0,T)},
  \]
  for a constant $C_m(\nu)$ possibly depending on $m$ and also on
  $\nu$. Especially the dependence on $m$ is crucial, since we
  consider the problem at fixed viscosity, while we need uniform
  estimates in $m\in\N$ to employ the Galerkin method. In particular,
  the uniform estimate does not follow directly since the series
  defining the kernel for $s=t$, that is
  $\big(\sum_{k=1}^{m}\beta_k^2\big)^{-1}\sum_{k=1}^{m}\lambda_k \beta_k^2$
  does not converge for $m\to+\infty$, see~\cite{Pil2006} for further
  details.

  We need to find the \textit{a priori} estimate in a different way,
  but observe that, once we have constructed $\pi^m$, we can use it as a
  \textit{given external force} in the equation for the
  velocity~\eqref{eq:galerkin}. By using $w^m$ as test function, we
  obtain (with the Schwarz inequality and by
  using~\eqref{eq:approximate_mean}) that
  \[
    \begin{aligned}
      \frac{1}{2}\|w^m\|^2+\nu\int_0^t\|\nabla
      w^m\|^2&=\frac{1}{2}\|w^m_0\|^2+
      \int_0^t\pi^m(s)(\uno,w^m(s))\,ds
       \\
       &= \frac{1}{2}\|w^m_0\|^2+
      \int_0^t\pi^m(s) f(s)\,ds
      \\
      & \leq
       \|w_0\|^2+
      \frac{1}{2\epsilon}\int_0^t|f(s)|^2
      +\frac{\epsilon}{2}\int_0^t|\pi^m(s)|^2\,ds.
    \end{aligned}
  \]
  Observe also that $w^m_0\to w_0$ in $L^2(D)$ and that
  $\|w^m_0\|^2\leq 2\|w_0\|^2$ for large enough $m$ and in addition
    \[
    \|\nabla w^m_0\|^2\leq 2\|\nabla
    w_0\|^2\leq\frac{2\nu^2}{\chi_0^2}\|f\|^2_{H^1(0,T)}\|\nabla\varphi\|^2\leq
    2\|f\|^2_{H^1(0,T)}.
    \]
    This gives the first a-priori estimate showing that, for all
    $m\geq m_0$ there exists a unique solution $w^m\in
    L^\infty(0,T;L^2)\cap L^2(0,T;H^1)$. Since the bounds we obtain on
    the solution are not independent of $m$, they cannot be used to
    make directly the Galerkin method to work. We can prove even more
    regularity on $w^m$ by standard estimates. In fact, by using as
    test function $w^m_t$ in the system satisfied by $w^m$ we get
    \[
      \|w^m_t(t)\|^2+\frac{\nu}{2}\frac{d}{dt}\|\nabla
      w^m(t)\|^2=\pi^m(t)(\uno,w^m_t(t))=
      \pi^m(t) f'(t).
    \]
    Hence, an application of the Schwarz inequality gives
    \begin{equation}
      \label{eq:Galerkin-2}
      \frac{\nu}{2}\|\nabla    w^m(t)\|^2+
      \int_0^t\|w^m_t\|^2\,ds\leq
      \nu\|\nabla w_0\|^2+
      \frac{\epsilon}{2}\int_0^t|\pi^m(s)|^2\,ds+\frac{1}{2\epsilon}\int_0^t|f'(s)|^2\,ds.
    \end{equation}
    By using $-\Delta w^m$ as test function in the equation
    satisfied by $w^m$ we also obtain
    \begin{equation}
      \label{eq:Galerkin-4}
      \frac{1}{2}\|\nabla    w^m(t)\|^2+
      \frac{\nu}{2}  \int_0^t\|\Delta w^m(s)\|^2\,ds\leq
      \|\nabla w_0\|^2+
      \frac{1}{2\nu}\int_0^t|\pi^m(s)|^2\,ds.
    \end{equation}
    These estimates are enough to construct, for each fixed $m\in\N$,
    a unique solution $(w^m,\pi^m)$, which is smooth, say
    $H^1(0,T;L^2(D))\cap L^2(0,T;H^2(D))$.  Again the presence of
    $\pi^m$ in the right-hand side prevents from uniformity in $m$.

    This issue is solved by using a special test function and a couple of
    nested a-priori estimates in the following lemma.
    \begin{lemma}
      \label{lem:estimate-galerkin}
    There exists $M_0\in \N$ (larger or equal than $m_0$) such that
    for all $m>M_0$ it holds for all $t\in[0,T]$
    \begin{equation}
      \label{eq:a-priori-estimate}
      \begin{aligned}
        \nu\|\nabla w^m(t)\|^2+
        \nu^2\int_0^t\|\Delta w^m(s)\|^2+\int_0^t\|w^m_t(s)\|^2\,ds+\int_0^t|\pi^m(s)|^2\,ds
        \\
        \leq  
        C\int_0^t\big((1+\nu^2)|f(s)|^2+(1+\nu)|f'(s)|^2\big)\,ds,
      \end{aligned}
    \end{equation}
    with a constant $C$ depending only on $D$ (hence independent of $m$ and of $T$).
  \end{lemma}
  \begin{proof}
    We observe that the function $P_m\varphi$, where $\varphi$ is
    defined in~\eqref{eq:1}, turns out to be a legitimate test
    function for the Galerkin system. With integration by parts, we
    obtain
    \[
      (w^m_t,P_m\varphi)-\nu(w^m,\Delta P_m\varphi)=
      \pi^m(t)(\uno,\varphi+(P_m\varphi-\varphi)).
    \]
    By adding to both sides of the previous equality the quantity
    $-\nu(w^m,\Delta\varphi)=(w^m,\uno)=f(t)$ we obtain 
    \[
      (w^m_t,P_m\varphi)+f=\pi^m(\uno,\varphi)+\pi^m(\uno,P_m\varphi-\varphi)+
     \nu (w^m,\Delta(P_m\varphi-\varphi)).
    \]
    The last term from the right-hand side vanishes
    since $w^m\in V_m$, while $\Delta(P_m\varphi-\varphi)\in
    V_m^\perp$. Hence squaring the latter equality (remember that
    $(\uno,\varphi)=\nu^{-1}\chi_0^2$) we obtain 
    \[
      \frac{\chi_0^4}{\nu^2}\,|\pi^m(t)|^2\leq
      3\big(\|w^m_t(t)\|^2\|P_m\varphi\|^2+|f(t)|^2
      +|\pi^m(t)|^2\|P_m\varphi-\varphi\|^2\big).
    \]
    Next, since $P_m$ is a projection operator it follows that there
    exists $M_0\in\N$ such that
    $\|P_m\varphi-\varphi\|^2<\chi_0^4/(6\nu^2)$ for all $m\geq
    M_0$. Consequently, we can absorb in the left-hand side the term
    involving $\pi^m$ from the right-hand side. 
    Consequently, after integration over $(0,T)$, we get
    \begin{equation}
      \label{eq:Galerkin-3}
            \int_0^t|\pi^m(s)|^2\,ds\leq 
      C\int_0^t\big(\|w^m_t(s)\|^2+\nu^2|f(s)|^2\big)\,ds\qquad\forall\,
      m\geq M_0,
    \end{equation}
    with a constant C depending only on $D$ (\textit{via} $\chi_0$ and
    $\eta_0$).  Consider again~\eqref{eq:Galerkin-2} and use
    now~\eqref{eq:Galerkin-3} with $\epsilon=1/C$. It is possible to
    absorb the term involving $\pi$ from the right-hand side,
    obtaining
    \[
      \begin{aligned}
        &\frac{\nu}{2}\|\nabla w^m(t)\|^2 +\frac{1}{2}
        \int_0^t\|w^m_t(s)\|^2\,ds
        \\
        &\leq \nu\|\nabla w_0\|^2+{C}
        \int_0^t\nu^2|f(s)|^2+|f'(s)|^2\,ds
        \\
        &\leq C \int_0^t(1+\nu^2)|f(s)|^2+(1+\nu)|f'(s)|^2\,ds
      \end{aligned}
    \]
    This shows a uniform bound on $\|w^m_t\|_{L^2(0,T)}$. Hence, coming back
    again to~\eqref{eq:Galerkin-3} we also obtain that
    \begin{equation*}
    \int_0^t|\pi^m(s)|^2\,ds\leq C
    \int_0^t(1+\nu^2)|f(s)|^2+(1+\nu)|f'(s)|^2\,ds
  \end{equation*}
   Next, by using the bound obtained in~\eqref{eq:Galerkin-4} we also get
   \begin{equation*}
    \nu^2 \int_0^t\|\Delta w^m(s)\|^2\leq   C
    \int_0^t(1+\nu^2)|f(s)|^2+(1+\nu)|f'(s)|^2\,ds
 \end{equation*}
 By collecting all the estimates and with Young's inequality we
 obtain~\eqref{eq:a-priori-estimate}.
  \end{proof}

  With the above lemma we can conclude the proof of the existence
  result, since uniform bounds imply that there exists a couple $(w,\pi)$
  and a sub-sequence $\{m_k\}_{k\in\N}$ such that
\begin{equation*}
  \begin{aligned}
    & w^{m_k}\rightharpoonup w\qquad\text{ in
    }H^1(0,T;L^2(D))\cap L^2(0,T;H^2(D)),
    \\
    &w^{m_k}\overset{*}{\rightharpoonup } w \qquad\text{ in
    }L^\infty(0,T;H^1_0(D)),
    \\
    &\pi^{m_k}\rightharpoonup \pi\qquad \text{ in }L^{2}(0,T).
  \end{aligned}
\end{equation*}
The problem is linear and this implies that $(w,\pi)$ is a
distributional solution of~\eqref{eq:pipe} with the requested
regularity. Uniqueness follows again from linearity of the problem.
\end{proof}

Since the estimates on the norm of the solution are independent of $T$
(they depend just on $D$ and on $\nu$) the same argument shows that if
$f\in H^1(0,+\infty)$, one can study the problem with arbitrary $T>0$
and then a unique solution (with the same regularity) exists in
$[0,+\infty)$. More generally we have also the following result on the
whole real line, which is obtained by letting the initial time to go
to $-\infty$.
\begin{corollary}
  \label{cor:H^1}
  Let be given $f\in H^1(\R)$. Then, there exists a unique solution
  $(w,\pi)$ of~\eqref{eq:pipe} defined for all $t\in\R$ such that
  \[
  \begin{aligned}
    & w\in C_o(\R;H^1_0(D))\cap H^1(\R;L^2(D))\cap L^2(\R;H^2(D)),
    \\
    &\pi\in L^2(\R),
  \end{aligned}
  \]
  with the same bounds as before (here $C_o$ denotes the subspace of
  continuous functions vanishing at infinity).
\end{corollary}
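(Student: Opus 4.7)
The plan is to obtain the solution on $\R$ as a limit of solutions on truncated half-lines $[-n,+\infty)$, exploiting that the constants in Proposition~\ref{prop:local_existence} do not depend on the endpoints of the time interval. For each $n\in\N$, shift time and apply Proposition~\ref{prop:local_existence} on $[-n,T]$ (for arbitrary $T>-n$) with compatible initial datum $w_n(-n,x)=\nu\,\varphi(x)f(-n)/\chi_0^2$, obtaining a unique solution $(w_n,\pi_n)$ on $[-n,T]$. Since the constant $C$ in~\eqref{eq:main_dependence-nu} depends only on $D$, the same bound translates, after extending $(w_n,\pi_n)$ by zero for $t<-n$, to
\[
  \nu\|\nabla w_n(t)\|^2+\int_{-n}^{t}\!\big(\nu^2\|\Delta w_n\|^2+\|w_{n,t}\|^2+|\pi_n|^2\big)\,ds
  \le C\int_{\R}\big((1+\nu^2)|f|^2+(1+\nu)|f'|^2\big)\,ds,
\]
uniformly in $n$ and $t$, since $f\in H^1(\R)$.

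Next I would pass to the limit. The uniform bounds yield a subsequence with $w_n\rightharpoonup w$ in $L^2(\R;H^2(D))\cap H^1(\R;L^2(D))$, $w_n\overset{*}{\rightharpoonup}w$ in $L^\infty(\R;H^1_0(D))$, and $\pi_n\rightharpoonup\pi$ in $L^2(\R)$. Linearity of~\eqref{eq:pipe-global} ensures the limit is a distributional solution with the stated regularity, and the bound~\eqref{eq:main_dependence-nu} is inherited by weak lower semicontinuity. Uniqueness is obtained by linearity: given two such solutions the difference $(\overline w,\overline\pi)$ solves the homogeneous system with zero flux; testing with $\overline w$ gives $\tfrac{1}{2}\tfrac{d}{dt}\|\overline w\|^2+\nu\|\nabla\overline w\|^2=\overline\pi\int_D\overline w\,dx=0$, which combined with $\overline w\in L^2(\R;H^1_0)$ forces $\overline w\equiv 0$, and then $\overline\pi\equiv 0$ from the equation.

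The main nontrivial point is to establish the $C_o(\R;H^1_0(D))$ regularity, i.e.\ continuity on $\R$ together with decay at infinity. Continuity with values in $H^1_0(D)$ follows from the Lions--Magenes intermediate derivative theorem applied to $w\in L^2(\R;H^2)\cap H^1(\R;L^2)$. For the decay, the estimate already gives $\Delta w\in L^2(\R;L^2(D))$; by the Poincaré inequality $\|\nabla w\|\le C\|\Delta w\|$ so $\nabla w\in L^2(\R;L^2(D))$. Moreover, the identity
\[
  \frac{d}{dt}\|\nabla w(t)\|^2=-2(\Delta w(t),w_t(t))
\]
together with $\Delta w,\,w_t\in L^2(\R;L^2(D))$ shows that $t\mapsto\|\nabla w(t)\|^2$ is absolutely continuous with integrable derivative, hence admits limits as $t\to\pm\infty$. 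Combined with $\nabla w\in L^2(\R;L^2(D))$, these limits must be zero, giving $\|\nabla w(t)\|\to 0$ as $|t|\to\infty$, which is exactly the $C_o$ property.
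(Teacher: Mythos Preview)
Your approach is the same as the paper's: let the initial time go to $-\infty$ and use the $T$-independence of the constant in Proposition~\ref{prop:local_existence}; the paper gives no details beyond this sentence, so your treatment of uniqueness and of the $C_o$ property is in fact more complete than what the paper provides. One small repair: extending $w_n$ by zero for $t<-n$ creates a jump at $t=-n$ (since $w_n(-n,\cdot)=\nu\varphi f(-n)/\chi_0^2\neq0$ in general), so the extended function is not in $H^1(\R;L^2)$ and you cannot take a weakly convergent subsequence in that space directly; either extend by the constant value $w_n(-n,\cdot)$ (as done in the proof of Proposition~\ref{prop:basic_uloc}) or pass to the limit on compact time intervals via a diagonal argument.
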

For our purposes of studying almost periodic solutions it is important
to show that one has a global solution, with uniformly bounded gradients,
also if the force $f$ is not in $H^1(0,+\infty)$, but just in
$H^1_\uloc(0,+\infty)$, where
\[
  \|f\|_{H^1_\uloc(0,+\infty)}
    := \sup_{t\geq0}\Big( \int_t^{t+1}|f(s)|^2+|f'(s)|^2\,ds\Big)^{\frac12}.  
\]
In particular, the following result will be crucial for the rest of
the paper.
\begin{proposition}
  \label{prop:intermediate_basic_uloc}
  Let be given $f\in H^1_{\uloc}(0,+\infty)$, then the unique solution
  of~\eqref{eq:pipe} exists for all positive time and it satisfies
      \[
        \begin{aligned}
          & w\in C_b(0,+\infty;H^1_0(D))\cap
      H^1_\uloc(0,+\infty;L^2(D))\cap L^2_\uloc(0,+\infty;H^2(D)),
      \\
      &\pi\in L^2_\uloc(0,+\infty),
    \end{aligned}
  \]
with the estimate~\eqref{eq:estimate-uloc0inf} in terms of the data. 
\end{proposition}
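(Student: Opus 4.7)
The plan is to combine Proposition~\ref{prop:local_existence} with an absorbing differential inequality for the ``energy'' $E(t):=\nu\|\nabla w(t)\|^2$ which converts the time-integrated bound~\eqref{eq:main_dependence-nu} into a uniformly local one. First, since the restriction of $f\in H^1_\uloc(0,+\infty)$ to every interval $[0,T]$ lies in $H^1(0,T)$, Proposition~\ref{prop:local_existence} provides a unique solution $(w^{(T)},\pi^{(T)})$ on $[0,T]$ with the stated regularity. By the linearity (and hence uniqueness) of the problem, the restrictions $(w^{(T_2)},\pi^{(T_2)})|_{[0,T_1]}$ and $(w^{(T_1)},\pi^{(T_1)})$ agree, so they glue to a unique $(w,\pi)$ defined on $[0,+\infty)$; note that the initial datum $w_0=\tfrac{\nu\varphi}{\chi_0^2}f(0)$ satisfies $\|w_0\|_{H^1(D)}^2\leq C|f(0)|^2\leq C\|f\|^2_{H^1_\uloc(0,+\infty)}$.

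The core technical step is to establish the differential inequality
\[
  \frac{d}{dt}E(t) + c_1\,\nu\,E(t)
   \leq C\bigl((1+\nu^2)|f(t)|^2+(1+\nu)|f'(t)|^2\bigr),
\]
with $c_1>0$ depending only on $D$ (through the first Dirichlet eigenvalue $\lambda_1$ of $-\Delta$ on $D$). To derive it I would recast the three ingredients of Lemma~\ref{lem:estimate-galerkin} in differential (rather than time-integrated) form at the limit level: testing the PDE with $w_t$ gives the identity $\|w_t\|^2+\tfrac{\nu}{2}\tfrac{d}{dt}\|\nabla w\|^2=\pi\,f'$ underlying~\eqref{eq:Galerkin-2}; testing with $-\Delta w$ gives $\tfrac{1}{2}\tfrac{d}{dt}\|\nabla w\|^2+\nu\|\Delta w\|^2\leq\tfrac{C}{\nu}|\pi|^2$, the differential form of~\eqref{eq:Galerkin-4}; and the pointwise-in-$t$ test against $P_m\varphi$ used in Lemma~\ref{lem:estimate-galerkin} yields $|\pi(t)|^2\leq C(\|w_t(t)\|^2+\nu^2|f(t)|^2)$ for a.e.\ $t$, which is inherited in the limit $m\to\infty$ through weak convergence and Lebesgue differentiation. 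Substituting this pointwise bound on $|\pi|^2$ into the first two identities absorbs both $\|w_t\|^2$ and $|\pi|^2$ from their right-hand sides, and the Poincar\'e inequality $\|\nabla w\|^2\leq\lambda_1^{-1}\|\Delta w\|^2$ then converts the dissipation $\nu^2\|\Delta w\|^2$ into the absorbing term $c_1\nu E$.

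With this inequality, Gronwall's lemma gives
\[
 E(t) \leq e^{-c_1\nu t}E(0) + C\int_0^t e^{-c_1\nu(t-s)}\bigl((1+\nu^2)|f(s)|^2+(1+\nu)|f'(s)|^2\bigr)\,ds,
\]
and the standard summability $\sup_{t\geq0}\int_0^t e^{-c_1\nu(t-s)}g(s)\,ds\leq(1-e^{-c_1\nu})^{-1}\|g\|_{L^1_\uloc(0,+\infty)}$, together with the control on $E(0)$ noted above, yields $\sup_{t\geq0}E(t)\leq C(\nu,D)\|f\|^2_{H^1_\uloc(0,+\infty)}$, hence $\nabla w\in C_b(0,+\infty;L^2(D))$. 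Integrating each of the three differential identities over $[t,t+1]$ and using the pointwise control of $E$ to dominate the boundary terms $E(t+1)-E(t)$, together with the pointwise bound on $|\pi|^2$ to transfer the $L^2_\uloc$ control from $w_t$ to $\pi$, gives the remaining uniformly local $L^2$ bounds on $\Delta w$, $w_t$ and $\pi$.

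The main obstacle is the derivation of the absorbing differential inequality: the nonlocal pressure $\pi$ is controlled only through the delicate test against $P_m\varphi$, which relates it to $w_t$ and $f$ but \emph{not} to $\nabla w$, so one must phrase that bound pointwise in $t$ and then combine it with the two test-function identities so that both $\|w_t\|^2$ and $|\pi|^2$ on the right-hand sides can be absorbed. Once the coercivity supplied by Poincar\'e's inequality is in place, the remaining estimates, and the continuity of $w$ into $H^1_0(D)$, are routine.
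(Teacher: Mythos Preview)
Your argument is correct and, once the algebraic combination of the three differential identities is carried out (adding a sufficiently large multiple of the $w_t$-identity to the $-\Delta w$-identity so that the $\|w_t\|^2$ term produced by the pointwise bound $|\pi|^2\leq C(\|w_t\|^2+\nu^2|f|^2)$ is absorbed, and then invoking $\|\nabla w\|^2\leq\lambda_1^{-1}\|\Delta w\|^2$), the absorbing inequality $\tfrac{d}{dt}E+c_1\nu E\leq C(\nu^2|f|^2+|f'|^2)$ follows and Gronwall yields the uniform bound; one small simplification is that at the limit level you can test directly against $\varphi$ rather than $P_m\varphi$, which gives the pointwise $\pi$-estimate immediately without passing to the limit.

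Your route, however, is different from the paper's. The paper does \emph{not} derive a pointwise absorbing differential inequality and apply Gronwall. Instead it combines the tests against $w$ and $\nu^{-1}w_t$ with the $\pi$-bound to obtain only the \emph{integral} inequality
\[
  \|w(\tau)\|_{H^1}^2 + c_1\nu\int_\xi^\tau\|w(s)\|_{H^1}^2\,ds
  \leq \tfrac12\|w(\xi)\|_{H^1}^2 + c_2\int_\xi^\tau\bigl(\nu|f|^2+\tfrac1\nu|f'|^2\bigr)\,ds,
\]
and then runs a sliding-window argument \`a la Amerio--Prouse: if $\|w(\ot)\|_{H^1}^2\leq\|w(\ot+1)\|_{H^1}^2$ one bounds $\|w(\ot)\|_{H^1}^2$ directly from the window $[\ot,\ot+1]$; otherwise one shifts the window backwards until either such a $\ot$ is found or one lands in $[0,1]$, where the local existence bound applies. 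Your Gronwall approach is shorter and gives an explicit exponential-in-time decay of the initial data contribution; the paper's window argument is more in the spirit of the classical almost-periodic literature and has the virtue of working entirely with time-integrated quantities, so it does not need the a.e.\ validity of the differential inequality. Both lead to the same uniformly local estimate~\eqref{eq:estimate-uloc0inf}.
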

\begin{proof}
  Generally this result is straightforward in presence of a standard
  parabolic problem. Since here we deal essentially with an inverse
  problem, we give a detailed proof, which is nevertheless
  obtained adapting the usual techniques typical of almost periodic
  solutions, see e.~g.~Amerio and Prouse~\cite{AP1971}. Observe that
  the estimate~\eqref{eq:a-priori-estimate} does not give a direct
  control of $\sup_{0<t<T}\|\nabla w(t)\|$ since the bound depends
  on $\|f\|_{H^1(0,T)}$ and consequently the $H^1$-norm of $w$ may
  become unbounded when $T\to+\infty$, if $f\not\in H^1(\R)$. We first
  prove that $\|\nabla w\|\in L^\infty(0,+\infty)$.

  First observe that Proposition~\ref{prop:local_existence} imply that
  there exists a unique solution
  \[
  \begin{aligned}
    & w\in C(0,+\infty;H^1_0(D))\cap
    H^1_\loc(0,+\infty;L^2(D))\cap L^2_\loc(0,+\infty;H^2(D)),
    \\
    &\pi\in L^2_\loc(0,+\infty),
  \end{aligned}
  \]
  hence the following calculations will be completely justified. By
  using the function $\varphi$ from~\eqref{eq:1} as test function we
  obtain with integration by parts the following identity
  \begin{equation*}
    (w_t,\varphi)-\nu(\Delta w,\varphi)=
    (w_t,\varphi)-(w,\nu\Delta\varphi)=\pi(\uno,\varphi).
  \end{equation*}
Hence, by recalling the definition of $\varphi$ and the flux condition we
get
\begin{equation*}
  \frac{\chi_0^2}{\nu}\pi(t)=(w_t(t,x),\varphi(x))+f(t)
\end{equation*}
and consequently taking the square and integrating over the
$[\xi,\tau]$ (for all couples $\xi\leq\tau$) we obtain that there
exists $C$ depending only on $D$ such that
\begin{equation}
  \label{eq:pressurexuloc}
  \int_\xi^\tau|\pi(s)|^2\,ds\leq
  C\int_\xi^\tau\|w_t(s)\|^2+\nu^2|f(s)|^2\,ds.
\end{equation}
Next, we test the equation satisfied by $(w,\pi)$ by $w$ and $\nu^{-1 }w_t$,
and by recalling that $\int_D w_t\,dx=\big(\int_D w\,dx\big)'=f'(t)$ we
obtain (for any $\epsilon,\eta>0$) the following differential
inequalities, a.e. $t\in(0,+\infty)$
\begin{equation*}
  \begin{aligned}
   & \frac{1}{2}\|w\|^2+\nu\|\nabla w\|^2\leq
    \frac{\epsilon}{2}|\pi|^2+\frac{1}{2\epsilon}|f|^2,
    \\
    &\frac{1}{2}\|\nabla w\|^2+\frac{1}{\nu}\| w_t\|^2\leq
    \frac{\eta}{2\nu}|\pi|^2+\frac{1}{2\eta \nu}|f'|^2.
  \end{aligned}
\end{equation*}
By choosing $\epsilon=(2\nu C)^{-1}$ and $\eta=(2 C)^{-1}$, where $C$
is the constant appearing in~\eqref{eq:pressurexuloc} and by
integrating over an arbitrary interval $[\xi,\tau]$ we get
\begin{equation}
  \label{eq:22july}
  \begin{aligned}
    &\frac{1}{2}\|w(\tau)\|_{H^1}^2+\int_\xi^\tau\big(\nu\|\nabla
    w(s)\|^2+\frac{1}{\nu}\|w_t(s)\|^2\big)\,ds
    \\
    &\qquad \leq
    \frac{1}{2}\|w(\xi)\|_{H^1}^2+\int_\xi^\tau\nu(2+C)|f(s)|^2+\frac{C}{\nu}|f'(s)|^2\,ds.
  \end{aligned}
\end{equation}
Hence, by using the Poincar\'e inequality and dropping the
non-negative term $\nu^{-1}\|w_t\|^2$ we finally obtain that there
exist $c_1,c_2>0$ and depending only on $D$ such that
\begin{equation}
  \label{eq:22july_bis}
  \begin{aligned}
    \|w(\tau)\|_{H^1}^2+c_1\nu\int_\xi^\tau\|    w(s)\|^2_{H^1}\,ds\leq
    \frac{1}{2}\|w(\xi)\|_{H^1}^2+c_2\int_\xi^\tau\nu|f(s)|^2+\frac{1}{\nu}|f'(s)|^2\,ds.
  \end{aligned}
\end{equation}
Suppose now that for a given $\ot\in[0,+\infty[$ it holds that
\begin{equation}
  \label{eq:amerio-1}
  \| w(\ot)\|^2_{H^1} \leq \| w(\ot+1)\|^2_{H^1}
\end{equation}
and rewrite~\eqref{eq:22july_bis} in the interval $[\ot,\ot+1]$ as
  follows:
  \begin{multline}
    \label{eq:intermediate2}
    \|w(\ot+1)\|^2_{H^1} - \|w(\ot)\|^2_{H^1}
    + c_1\nu\int_{\ot}^{\ot+1} \| w(s)\|^2_{H^1}\,ds\leq
    c_2\int_{\ot}^{\ot+1}\nu|f(s)|^2+\frac{1}{\nu}|f'(s)|^2\,ds.
  \end{multline}
  Since by hypothesis~\eqref{eq:amerio-1} the first term is
  non-negative we obtain in particular that
    \begin{equation}
      \label{eq:intermediate-estimate}
      c_1\nu\int_{\ot}^{\ot+1} \| w(s)\|^2_{H^1}\,ds\leq
      c_2\left(\nu+\frac{1}{\nu}\right)\|f\|^2_{H^1_\uloc}.
    \end{equation}
    Next, by using the estimate~\eqref{eq:intermediate-estimate} and
    the same argument we get, for each couple $\tau\leq\xi$ with
    $\tau,\xi\in[\ot,\ot+1]$, that 
    \[
    \begin{aligned}
      \left| \|w(\tau)\|^2_{H^1}-\| w(\xi)\|^2_{H^1}\right| &\leq
      c_1\nu\int_{\xi}^{\tau}\| w^m(s)\|^2_{H^1}\,ds+
      c_2(\nu+\frac{1}{\nu})\|f\|_{H^1_\uloc}^2
      \\
      &\leq 2c_2(\nu+\frac{1}{\nu})\|f\|_{H^1_\uloc}^2.
      \end{aligned}
  \]
  Since $\|w\|^2_{H^1}$ is a continuous function we can fix
  $\xi\in[\ot,\ot+1]$ such that
  \[
  \| w(\xi)\|^2_{H^1}=\min_{\ot\leq s\leq\ot+1} \|w(s)\|^2_{H^1}.
  \]
  We have then, using~\eqref{eq:amerio-1} and the definition of $\xi$,
  \[
  \begin{aligned}
    \| w(\ot)\|^2_{H^1}
    &\leq \|w(\ot+1)\|^2_{H^1}
    \\
    &\leq \Bigl|\| w(\ot+1)\|^2_{H^1} - \| w(\xi)\|^2_{H^1}\Bigr|
    + \| w(\xi)\|^2_{H^1}
    \\
    & =   \Bigl|\|w(\ot+1)\|^2_{H^1} - \| w(\xi)\|^2_{H^1}\Bigr|
    + \int_{\ot}^{\ot+1}\| w(\xi)\|^2_{H^1}\,ds
    \\
    &\leq \Bigl|\| w(\ot+1)\|^2_{H^1} - \|w(\xi)\|^2_{H^1}\Bigr|
    + \int_{\ot}^{\ot+1}\| w(s)\|^2_{H^1}\,ds.
  \end{aligned}
  \]
  Finally, from~\eqref{eq:intermediate2}
  and~\eqref{eq:intermediate-estimate} we obtain
  \[
  \| w(\ot)\|^2_{H^1}\leq   C\left(\nu+1+\frac{1}{\nu^2}\right)\|f\|_{H^1_\uloc}^2.
  \]
  Hence, for each $\ot\in(0,+\infty)$ such that~\eqref{eq:amerio-1} holds true
  we have that $\|w(\ot)\|^2_{H^1}$ is bounded uniformly. On the contrary, if
  \[
  \| w(\ot)\|^2_{H^1} > \| w(\ot+1)\|^2_{H^1},
  \]
  we can repeat the same argument on the ``window'' $[\ot-1,\ot]$. In
this way it is clear that if we are not able to find an interval
$[\ot-n-1,\ot-n]$ with $n<\ot-1$ such that 
\begin{equation*}
  \| w(\ot-n-1)\|^2_{H^1}
  \leq \| w(\ot-n)\|^2_{H^1}
\end{equation*}
we obtain, for some $\overline{n}\in\N$ such that $0\leq \ot-n<1$, the inequalities
\[
   \| w(\ot-n)\|^2_{H^1} > \dots > \|w(\ot)\|^2_{H^1},
\]
hence that 
\[
\| w(\ot)\|_{H^1}\leq  \sup_{0\leq t\leq1}\|w(t)\|^2_{H^1}.
\]
The latter is bounded simply by the local existence result. This
finally shows that, since $\|w_0\|_{H^1}$ depends itself on the norm
of $f$ in $H^1_\uloc(\R)$, for all $t\in\R^+$,
\[
\| w(t)\|^2_{H^1} + c_1 \nu \int_{t}^{t+1}\| w(s)\|^2_{H^1}\,ds \leq
C\left(\nu+1+\frac{1}{\nu^2}\right)\|f\|_{H^1_\uloc}^2.
\]
Next, by going back to~\eqref{eq:22july} we obtain an estimate on
$\|w_t\|^2_{L^2(t,t+1)}$, which implies by \eqref{eq:pressurexuloc}
the corresponding estimate for $|\pi|^2_{L^2(t,t+1)}$. Finally, by
comparison we get also an estimate for $\|\Delta w\|^2_{L^2(t,t+1)}$
and by collecting all inequalities we finally get
\begin{equation}
  \label{eq:estimate-uloc0inf}
  \begin{aligned}
    \sup_{t\geq0} \ \Big[\nu\|\nabla w(t)\|^2+ \int_t^{t+1}\big(\nu^2\|\Delta
    w(s)\|^2+\|w_t(s)\|^2+|\pi(s)|^2\big)\,ds \Big]
    \\
    \leq
    C\big(\nu^2+1+\frac{1}{\nu}\big)\|f\|^2_{H^1_\uloc(0,+\infty)},
  \end{aligned}
\end{equation}
ending the proof of the Proposition.
\end{proof}
We can finally construct a solution over the whole real line and we
have the following result.
\begin{proposition}
  \label{prop:basic_uloc}
  Let be given $f\in H^1_{\uloc}(\R)$, with 
\[
  \|f\|_{H^1_\uloc(\R)}:=\sup_{t\in\R}\Big[\int_t^{t+1}|f(s)|^2+|f'(s)|^2\,ds\Big]^{1/2},
\]
then the unique solution of~\eqref{eq:pipe-global} exists for all times and it
satisfies
\[
\begin{aligned}
  &w\in C_b(\R;H^1)\cap H^1_\uloc(\R;L^2)\cap L^2_\uloc(\R;H^2),
  \\
  &\pi\in L^2_\uloc(\R),
\end{aligned}
\]
satisfying the estimate~\eqref{eq:estimate-ulocR}.
\end{proposition}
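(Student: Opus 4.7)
The plan is to build the solution on $\R$ by taking solutions on the increasing half-lines $[-n,+\infty)$ produced by Proposition~\ref{prop:intermediate_basic_uloc} (after a time shift) and passing to the limit $n\to\infty$. For each $n\in\N$, the translated flux $f_n(\cdot):=f(\cdot-n)$ lies in $H^1_\uloc(0,+\infty)$ with $\|f_n\|_{H^1_\uloc(0,+\infty)}\leq \|f\|_{H^1_\uloc(\R)}$, so Proposition~\ref{prop:intermediate_basic_uloc}, applied with the compatible initial datum $\tfrac{\nu\varphi(\cdot)}{\chi_0^2}f_n(0)$, yields a solution $(\widetilde w_n,\widetilde\pi_n)$ on $[0,+\infty)$. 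Translating backwards by $n$ gives a pair $(w_n,\pi_n)$ defined on $[-n,+\infty)$ which solves~\eqref{eq:pipe-global} there and satisfies the estimate~\eqref{eq:estimate-uloc0inf} with a constant depending only on $D$ and on $\|f\|_{H^1_\uloc(\R)}$, hence \emph{uniformly in $n$}.

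Next, for any fixed $N>0$ and all $n\geq N$, the sequence $\{w_n\}$ is bounded in $H^1(-N,N;L^2(D))\cap L^2(-N,N;H^2(D))\cap L^\infty(-N,N;H^1_0(D))$ and $\{\pi_n\}$ in $L^2(-N,N)$. A standard diagonal extraction over $N\to\infty$ produces a subsequence converging weakly (respectively weakly-$*$ in $L^\infty$) to a pair $(w,\pi)$ defined on all of $\R$. Since the problem is linear, it is immediate to pass to the limit in~\eqref{eq:pipe-global} on each finite interval, and the bound~\eqref{eq:estimate-ulocR} follows from the uniform $n$-bound on $(w_n,\pi_n)$ by weak lower semicontinuity of the norms (applied on each window $[t,t+1]$ and then taking the supremum in $t$).

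The regularity statement $w\in C_b(\R;H^1_0(D))$ is obtained by combining $\nabla w\in L^\infty(\R;L^2(D))$ with $w_t\in L^2_\uloc(\R;L^2(D))$, which gives uniform continuity of $t\mapsto w(t)$ in $L^2(D)$, and then interpolating with the $H^2$ regularity.

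The main obstacle is uniqueness in this class, since energy arguments on the full line are not automatic without some decay. Given two solutions, their difference $(\delta w,\delta\pi)$ solves the homogeneous version of~\eqref{eq:pipe-global} with zero flux. Testing the equation with $\delta w$ then kills the pressure term (because $\int_D \delta w\,dx\equiv 0$) and yields
\[
\tfrac{1}{2}\tfrac{d}{dt}\|\delta w(t)\|^2+\nu\|\nabla\delta w(t)\|^2=0.
\]
Poincar\'e's inequality upgrades this to $\tfrac{d}{dt}\|\delta w\|^2+c\nu\|\delta w\|^2\leq 0$, so for any $s<t$ one gets $\|\delta w(t)\|^2\leq e^{-c\nu(t-s)}\|\delta w(s)\|^2$; using the uniform bound $\sup_{s\in\R}\|\delta w(s)\|<\infty$ inherited from $(w_1,w_2)\in C_b(\R;H^1_0)$ and letting $s\to-\infty$ forces $\delta w\equiv 0$, and then~\eqref{eq:pipe-global} gives $\delta\pi\equiv 0$ as well.
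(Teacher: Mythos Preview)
Your proof is correct and follows essentially the same strategy as the paper: solve~\eqref{eq:pipe-global} on the half-lines $[-n,+\infty)$ using Proposition~\ref{prop:intermediate_basic_uloc}, exploit the uniform-in-$n$ estimate~\eqref{eq:estimate-uloc0inf}, and pass to a weak limit. The paper phrases the construction slightly differently (it extends each $(w_n,\pi_n)$ to all of $\R$ by freezing the values at $t=-n$ rather than using a diagonal extraction, and it first records the elementary fact that $f\in H^1_\uloc(\R)$ implies $f\in C_b(\R)$, which is needed to make sense of the initial datum $f(-n)$), but these are cosmetic differences.

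One point where your argument is actually more complete than the paper's: for uniqueness on the full line the paper simply writes ``Since the problem is linear this is the unique solution,'' which is not self-evident without an initial condition. Your energy argument---testing the difference equation with $\delta w$, using $\int_D\delta w\,dx=0$ to kill the pressure term, obtaining exponential decay via Poincar\'e, and sending the initial time to $-\infty$ against the uniform $L^\infty(\R;L^2)$ bound---fills this gap cleanly and is the natural way to justify the claim.
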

\begin{proof}
  We start by observing that since $f\in H^1_{\uloc}(\R)$, then it
  follows that $f\in C_b(\R)$ (again by a.e. identification). It
  follows directly that $f\in C(\R)$, but the control of the maximum
  of $|f|$ is obtained as follows. We claim that $\sup_{x\in\R}
  |f(x)|\leq 2\|f\|_{H^1_\uloc}$.  In fact, for each couple of points
  $x,y\in\R$ such that $|x-y|\leq1$ it follows that
  \[
    |f(x)-f(y)|=\left|\int_x^y f'(s)\,ds\right|\leq\|f\|_{H^1_\uloc}.
  \]
  Suppose now \emph{per absurdum} that there exists $x_0\in\R$ such
  that $|f(x_0)|>2\|f\|_{H^1_\uloc}$. The previous inequality implies
  that
  \[
  |f(x)| > \|f\|_{H^1_\uloc} \qquad\text{for all }x\in[x_0,x_0+1],
  \]
  hence the contradiction
  \[
   \int_{x_0}^{x_0+1} |f(s)|^2\,ds
     > \|f\|_{H^1_\uloc}^2.
  \]
  This proves that the bound on $|f(x)|$ is true for all $x\in \R$.

  The proof of Proposition~\ref{prop:basic_uloc} is then obtained by
  using the previous results from
  Proposition~\ref{prop:intermediate_basic_uloc} to solve the following
  family of problems parametrized by $n\in\N$,
\[
  \begin{cases}
    \partial_t w_n - \nu\Delta w_n = \pi_n,	&\qquad x\in D,\; t>-n,
    \\
    w_n(t,x) = 0,				&\qquad x\in\partial D,\; t>-n,
    \\
    \int_D w_n(t,x)\,dx = f(t)			&\qquad t>-n,
    \\
    w_n(-n,x)=\frac{\nu\varphi(x)}{\chi_0^2}f(-n),&\qquad x\in D,\,t=-n.
  \end{cases}
\]
The same arguments as before imply that
\[
  \begin{aligned}
    &w_n\in L^2_\uloc(-n,+\infty;H^2(D))\cap
    H^1_\uloc(-n,+\infty;L^2(D)),
    \\
    &\nabla w_n\in L^\infty(-n,+\infty;L^2(D)), 
    \\
    &\pi_n\in L^2_{\uloc}(-n,+\infty),
      \end{aligned}
  \]
  with bounds independent of $n\in\N$ (the dependence on $\nu$ is the
  same as in~\eqref{eq:main_dependence-nu}).  By defining the extended
  functions $\widetilde{w}_n(t,x)$ and $\widetilde{\pi}_n(t,x)$ on the whole
  real line as
\[
    \widetilde{w}_n(t,x):=
    \begin{cases}
      w_n(t,x)&\quad t\geq -n,
      \\
      \frac{\nu\varphi(x)}{\chi_0^2}f(-n)&\quad t\leq -n,
    \end{cases}\quad\text{and}\quad 
    \widetilde{\pi}_n(t):=
    \begin{cases}
      \pi_n(t)&\quad t\geq -n,
      \\
      \frac{\nu f(-n)}{\chi_0^2}&\quad t<-n,
    \end{cases}
\]
we can extract a sub-sequence (relabeled) as
$(\widetilde{w}_n(t,x),\widetilde{\pi}_n(t,x))$ such that
\[
  \begin{aligned}
    &\widetilde{w}_n \rightharpoonup w
      \qquad&\text{in } L^2_\uloc(\R;H^2(D))\cap H^1_\uloc(\R;L^2(D)),
      \\
    &\nabla\widetilde{w}_n\overset{*}{\rightharpoonup} \nabla w
      \qquad&\text{in } L^\infty(\R;L^2(D)),\phantom{\hspace{3.6cm}}
      \\
    &\widetilde{\pi}_n\rightharpoonup \pi
      \qquad&\text{in } L^2_{\uloc}(\R).\phantom{\hspace{4.6cm}}
  \end{aligned}
\]
It is easy to see that $(w,\pi)$ is a distributional solution to~\eqref{eq:pipe}.
Since the problem is linear this is the unique solution. 
\end{proof}
We finally prove the result in Stepanov space of almost periodic
functions.
\begin{proof}[Proof of Theorem~\ref{thm:basic_flow_Stepanov}]
  Since the function $f$ is almost periodic, for each sequence
  $\{r_m\}\subset \R$ we can find a sub-sequence
  $\{r_{m_k}\}\subset\R$ and a function $\widehat{f}$ such that
  \[
    \sup_{t\in\R}\int_t^{t+1}\big(|f(\tau+r_{m_k})-\widehat{f}(\tau)|^2+
    |f'(\tau+r_{m_k})-\widehat{f}'(\tau)|^2\big)\,d\tau
      \overset{k\to+\infty}{\longrightarrow}0. 
  \]
  By using a standard argument by contradiction (see e.~g.~Foias and
  Zaidman~\cite{FZ1961} and Foias and Prodi~\cite{FP1967}) we assume
  \textit{per absurdum} that $w$ and $\pi$ are not almost periodic,
  hence that there exist a sequence $\{h_m\}\subset\R$, a function
  $\widehat{f}:\R\to\R$ such that
  \[
    \sup_{t\in \R}\int_t^{t+1}\big(|f(\tau+h_m)-\widehat{f}(\tau)|^2+
    |f'(\tau+h_m)-\widehat{f}'(\tau)|^2\big)\,d\tau
      \overset{m\to+\infty}{\longrightarrow}0,
  \]
  a constant $\delta_0>0$, and three
  sequences $\{t_p\}$, $\{h_{m_p}\}$, $\{h_{n_p}\}$ such that for
  all $p\in \N$,
  \[
    \sup_{t\in\R}\int_{t_p}^{t_p+1}\big(\nu^2\|\Delta
    w(\tau+h_{m_p})-\Delta w(\tau+h_{n_p})\|^2+
    |\pi(\tau+h_{m_p})-\pi(\tau+h_{n_p})|^2\big)\,d\tau\geq\delta_0.
  \]
  In addition, there are (by eventually relabeling the sequences) two real
  functions $\widehat{f}_1$ and $\widehat{f}_2$ such that
  \[
    \begin{aligned}
     & \sup_{t\in\R}\int_t^{t+1}\big(|f(\tau+t_p+h_{m_p})-\widehat{f}_1(\tau)|^2+
      |f'(\tau+t_p+h_{m_p})-\widehat{f}'_1(\tau)|^2\big)\,d\tau
        \overset{p\to+\infty}{\longrightarrow}0,
      \\
     & \sup_{t\in\R}\int_t^{t+1}\big(|f(\tau+t_p+h_{n_p})-\widehat{f}_2(\tau)|^2+
      |f'(\tau+t_p+h_{n_p})-\widehat{f}'_2(\tau)|^2\big)\,d\tau
        \overset{p\to+\infty}{\longrightarrow}0.
    \end{aligned}
  \]
  It is clear that $\widehat{f}=\widehat{f}_1=\widehat{f}_2$. We consider now the
  problem~\eqref{eq:pipe} with the two fluxes
  \[
    \begin{aligned}
     & F_{1 p}(t):=f(t+t_p+h_{m_p}),
      \\
      &F_{2 p}(t):=f(t+t_p+h_{n_p}),
    \end{aligned}
  \]
  and the corresponding  solutions
  \[
    \begin{aligned}
   & w_{1 p}(t,x):=w(t+t_p+h_{m_p},x),\qquad  \qquad   \pi_{1 p}(t,x):=\pi(t+t_p+h_{m_p}),
    \\
    &w_{2 p}(t,x):=w(t+t_p+h_{n_p},x),\qquad   \qquad   \pi_{2 p}(t,x):=\pi(t+t_p+h_{n_p}).
  \end{aligned}
  \]
Observe that $F_{1 p}(t)\to f(t)$ and $F_{2 p}(t)\to f(t)$ in $H^1_\uloc(\R)$
as $p\to\infty$. Hence, by passing to the limit as $p\to+\infty$ we construct
two solutions $(w_1,\pi_1)$ and $(w_2,\pi_2)$ corresponding to the same
flux $f$. In particular,
\[
  w_{1 p}\to w_1
    \qquad\text{and}\qquad
  w_{2 p}\to w_2,
\]
in $C_b(\R;H^1_0(D))$, but also in the topologies given in the statement of
Proposition~\ref{prop:basic_uloc}. Hence, we have in particular
\begin{multline*}
  \delta_0 \leq\int_{t_{p_s}}^{t_{p_s}+1}\nu^2\|\Delta
  w(\tau+h_{m_{p_s}})-\Delta w(\tau+h_{n_{p_s}})\|^2+
  |\pi(\tau+h_{m_{p_s}})-\pi(\tau+h_{n_{p_s}})|^2\,d\tau =
  \\
  = \int_{0}^{1}\nu^2\|\Delta w_{1 p_s}(\tau)-\Delta w_{2 p_s}(\tau)\|^2+
  |\pi_{1 p_s}(\tau)-\pi_{2 p_s}(\tau)|^2 \,d\tau \longrightarrow 0,
\end{multline*}
because the problem is linear and the two solutions $w_1$ and
$w_2$ corresponding to the same flux, coincide. This proves the
almost periodicity of $w$ and the same argument applied to $w_t$ ends
the proof of the result.
\end{proof}
\subsection{The full nonlinear problem in
  \texorpdfstring{$S^2(\R)$}{S2(R)}}\label{sec:nonlinear_Stepanov}
%
We now finally consider the Navier-Stokes equations and we look for
solutions of the following problem
\[
  \begin{cases}
    \partial_t u - \nu\Delta u + (u\cdot\nabla)u + \nabla p = 0,
      &\qquad x\in O,\,t\in\R,\\
    \nabla\cdot u = 0,
      &\qquad x\in O,\,t\in\R,\\
    u=0,
      &\qquad x\in \partial O,\,t\in\R,
  \end{cases}
\]
with
\[
  \sup_{t\in\R}\|u(t) - w_i(t)\|_{H^1_0(O_i)}\leq c_0,
\]
for $i=1,2$. Observe that this constraint implies
(see~\cite[\S~7]{Bei2005c} and~\cite[VI.I]{Gal1994a}) that
\[
\lim_{z\to+\infty}\|u(t) - w_i(t)\|_{H^{1/2}(D_i))}=0,
\]
uniformly in $t$.

As a preliminary remark we recall that it is well-known (see for
example~\cite[VI]{Gal1994a}, Amick~\cite{Ami1977b}) that due to the
particular shape of the unbounded domain $O$ the Poincar\'e inequality
holds true
\begin{equation}
  \label{eq:Poincare}
  \exists \,c_P>0\qquad   \|u\|_{L^2(O)}\leq c_P\|\nabla
  u\|_{L^2(O)}\qquad \forall\, u\in H^1_0(O) 
\end{equation}
and, if the boundary is smooth enough (say of class $C^{1,1}$), then
the following estimate for the Stokes operator holds true
\begin{equation*}
  \|u\|_{H^2(O)}\leq c\|P\Delta u\|_{L^2(O)},
    \qquad u\in H^2(O)\cap H^1_0(O).
\end{equation*}

We now take advantage of the results on the linear case of the
previous section, but first we need to ``glue'' the basic flows
constructed in the two pipes.  In this part of the paper we do not
claim any originality and we use a classical approach. We denote by
$z\in \R^+$ the axial coordinate in both cylinders and we define the
``truncated pipes''
\[
  O_i^r:=\left\{(x,z)\in O_i:\ z<r\right\}\qquad i=1,2,
\]
and we also define the truncated domain
\[
  O^r:=O_0\cup  O_1^r\cup   O_2^r.
\]
We first define a field (which is as smooth as $w_i$) defined on
$O$ and such that is equal to $w_i$ in the sets $O_i\backslash
O_i^1$. This extension is obtained by freezing the time variable and
by gluing together the functions $w_i$ by cut-off functions
$\phi_i(z)\in C^\infty(\R^3)$ depending just on the axial coordinate
$z$, and such that
\[
  \phi_i(z)=
  \begin{cases}
    1 &\qquad x\in O_i\backslash O_i^1,
    \\
    0&\qquad x\in  O_i^{1/2}.
  \end{cases}
\]
Next observe that, 
\begin{equation}
  \label{eq:V0_def}
  V_0:=\sum_{i=1}^2\phi_i\,w_i\in H^1_\uloc(\R;L^2(O))\cap L^2_\uloc(\R;H^2(O))
\end{equation}
since the mapping $(w_1,w_2)\mapsto V_0$ is bi-linear and the
extension does not involve the time-variable. 
\begin{remark}
  We also observe that if we are in a different functional framework
  the same procedure can be applied because the properties of the
  extension with respect to the time variable are the same of those of
  the basic flows $(w^i,\pi^i)$.
\end{remark}
This is not exactly the required extension, since the function $V_0$ is not
divergence-free.  To this end one has to use the Bogovski{\u\i}
formula to solve the \textit{linear} problem in the bounded domain
$O^1$: Find $\mathcal{B}\, V_0$ such that
\[
  \begin{cases}
    \nabla\cdot(\mathcal{B}\, V_0)=-\nabla \cdot V_0,&\qquad x\in O^1
  \\
 \mathcal{B}\, V_0=0,&\qquad x\in \partial O^1.
  \end{cases}
\]
Since $\nabla\cdot V_0\in H^1_0(O^1)$ and the compatibility
condition $\int_{O^1}\nabla\cdot V_0=0$ is satisfied the problem has a
solution $\mathcal{B}\,V_0\in H^2( O^1)$. Denoting again by
$\mathcal{B}\,V_0$ the null extension of $O^1$, we finally set
  \begin{equation}
   \label{eq:v0_def}
    w:=V_0+\mathcal{B}V_0.
  \end{equation}
The regularity of the solution of the divergence equation and the
previous argument shows that if $w_i\in H^1_\uloc(\R;L^2(O_i))\cap L^2_\uloc(\R;H^2(O_i))$,
then 
\[
w=w_i\qquad \text{in }O_i\backslash O^1_i
\]
and
\[
\|w\|_{H^1(\R;L^2(O))\cap L^2(\R;H^2(O))}
  \leq c\sum_{i=1}^2\|w_i\|_{H^1_\uloc(\R;L^2(O_i))\cap L^2_\uloc(\R;H^2(O_i))},
\]
for some $c=c(O_0,O_1,O_2)$.
We have finally the following result.
\begin{lemma}
  Let $(w_i,\pi_i)$ be solutions of the basic flow  with the
  regularity of Theorem~\ref{thm:basic_flow_Stepanov}. Then the
  extended function $w$ is Stepanov almost periodic and 
  \[
  \begin{aligned}
    & \|w\|_{L^\infty(\R;H^1_0(O)\cap H^1_\uloc(\R;L^2(O))\cap
      L^2_\uloc(\R;H^2(O))}
    \\
    &\qquad \leq c\sum_{i=1}^2\|w_i\|_{L^\infty(\R;H^1_0(O)\cap
      H^1_\uloc(\R;L^2(O_i))\cap L^2_\uloc(\R;H^2(O_i))}.
  \end{aligned}
  \]
\end{lemma}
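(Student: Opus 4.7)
The plan is to exploit the fact that the extension $w=V_0+\mathcal{B}V_0$ is built from $(w_1,w_2)$ by operations that act only on the spatial variables and are linear: pointwise-in-$t$ multiplication by the $z$-dependent cutoffs $\phi_i$, summation, and the Bogovski{\u\i} operator $\mathcal{B}$ on the bounded domain $O^1$. Consequently, both the norm estimate and the Stepanov almost periodicity claim reduce to a bounded-linear transfer argument from $(w_1,w_2)$ to $w$, using the properties already established in Theorem~\ref{thm:basic_flow_Stepanov}.

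For the norm bound, I would first note that each multiplication $u\mapsto\phi_i u$, with $\phi_i\in C^\infty(\R^3)$ having uniformly bounded derivatives, is bounded $H^k_0(O_i)\to H^k_0(O)$ for $k=0,1,2$, which controls $V_0(t)$ in these spaces by $w_1(t)$ and $w_2(t)$ with a constant depending only on $O_0,O_1,O_2$. For the correction $\mathcal{B}V_0$ I would invoke the standard Bogovski{\u\i} estimates on the bounded Lipschitz domain $O^1$ (see Galdi, Ch.~III), giving bounded maps $\mathcal{B}\colon L^2_0(O^1)\to H^1_0(O^1)$ and $\mathcal{B}\colon H^1_0(O^1)\to H^2(O^1)\cap H^1_0(O^1)$; the compatibility condition $\int_{O^1}\nabla\cdot V_0\,dx=0$ is ensured, pointwise in $t$, by the orientation of the two pipes' axes together with the shared flux $f(t)$. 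Applying these bounds pointwise in time and then taking $\sup_{t\in\R}$, respectively $\sup_t\int_t^{t+1}$, of both sides yields the inequality stated.

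For Stepanov almost periodicity I would use the translate-compactness characterization directly. Given any sequence $\{r_n\}\subset\R$, Theorem~\ref{thm:basic_flow_Stepanov} combined with a diagonal extraction yields a subsequence (not relabeled) along which $w_i(\cdot+r_n)$ converges, for $i=1,2$, in $L^2_\uloc(\R;H^2(D_i))\cap H^1_\uloc(\R;L^2(D_i))$ and in $C^0_b(\R;H^1_0(D_i))$ to limits $w_i^\ast$. Since the spatial maps $u\mapsto\phi_i u$ and $V\mapsto\mathcal{B}V$ are bounded linear and commute with time translations, they are continuous on the corresponding $L^2_\uloc$ and $C^0_b$ topologies, so $w(\cdot+r_n)\to\sum_i\phi_i w_i^\ast+\mathcal{B}\bigl(\sum_i\phi_i w_i^\ast\bigr)$ in the same topology, which gives the required relative compactness of the translates of $w$. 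The only points needing attention are to check that the Bogovski{\u\i} compatibility holds at each $t$ and that its operator norm is independent of $t$; both are automatic once the cutoffs and the truncation $O^1$ are fixed in a time-independent way, so no genuine obstacle arises.
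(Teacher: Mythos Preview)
Your proposal is correct and follows essentially the same approach as the paper: the lemma is stated in the paper as a summary of the construction immediately preceding it, and the justification given there is precisely that $(w_1,w_2)\mapsto V_0$ and $V_0\mapsto\mathcal{B}V_0$ are bounded linear operators acting only in the spatial variables, so both the $L^2_\uloc$/$H^1_\uloc$/$L^\infty$ norm bounds and the Stepanov almost periodicity transfer from the $w_i$ to $w$. Your write-up is in fact more explicit than the paper's, spelling out the translate-compactness argument and the pointwise-in-time verification of the Bogovski{\u\i} compatibility condition, but the underlying idea is the same.
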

With this result, we look now for solutions in the form 
\[
  u = U + w,
  \qquad
  p = P + \pi.
\]
We have to find $U\in L^\infty_\uloc(\R;L^2(O))\cap
L^2_\uloc(\R;H^1_0(O))$ solving
\begin{equation}
  \label{eq:auxiliar_Leray-nonlinear}
  \begin{cases}
    \partial_t U - \nu\Delta U
    +(U\cdot\nabla)\,U+(U\cdot\nabla)\,w+(w\cdot\nabla)\,U
    + \nabla P = F,
    \\
    \nabla\cdot U = 0,
    \\
    U = 0, &\text{on }\partial O,
    \\
    U(0,x)=-w(0,x).
  \end{cases}
\end{equation}
where 
\begin{equation}
  \label{eq:F_def}
  F(t,x,z):=-\big(\partial_t w(t,x)-\nu\Delta
  w(t,x)+(w(t,x)\cdot\nabla)\,w(t,x)\big)+ 
  \sum_{i=1}^2 \frac{\partial}{\partial z}(z\,\phi_i(z) \pi_i(t)).
\end{equation}
From the results of the previous section we can infer that $F\in
S^2(\R;L^2(O))$, hence we can now use standard results to show
existence of an almost periodic solution $U$.  The support of $F$ is
contained in the bounded subset $\mathcal{O}^1$, hence we can use the
standard variational techniques to show existence of a unique
solution, provided that the viscosity is large enough.  In particular
the only property that we need to check is that
$(w(t,x)\cdot\nabla)\,w(t,x)$ is almost periodic.  This follows since
if we take the $L^2(O)$-norm of $F$, this is equal to the
$L^2(O_1)$-norm. All other terms in the summation are clearly
relatively compact in $L^2_{\uloc}(\R)$, the nonlinear one can be
estimated as
\[
  \int_{t}^{t+1}\left|\int_{O_1}
    w(t,x)\cdot\nabla)\,w(t,x)\right|^2\leq
  \sup_t\|\nabla w(t)\|_{L^2(O_1)}^2\int_{t}^{t+1}\|\Delta w(s)\|_{L^2(O_1)}^2\,ds . 
\]
This proves that $F\in \ste^2(\R;L^2(O))$ and, by using this
expression and from the estimate~\eqref{eq:estimate-ulocR} on
$(w^i,\pi^i)$, we obtain that
\begin{equation*}
  \|F\|_{L^2_\uloc}^2\leq C\Big(\nu^2+1+\frac{1}{\nu^4}\Big)
  \|f\|_{H^1_\uloc}^2. 
\end{equation*}
In particular, as in the previous section, we need to show that there
exists a solution $U\in L^2_\uloc(0,+\infty;H^2(O))$, and this will
follow by using that the right-hand side is in
$L^2_\uloc(0,+\infty;L^2(O))$. The existence of a local solution
$L^2_\loc(0,+\infty;H^1_0(O))$ follows by standard arguments. In
particular, one has to perform a truncation in the space variables
(which is possible since $F$ is of compact support, see
again~\cite{Bei2005c}) and the usual a-priori estimates. In addition
to these rather standard results, we now prove that the solution is
strong (provided that the viscosity is large enough, which is
nevertheless needed also for weak solutions) and that the solution is
uniformly bounded in $H^1(O)$ and $L^2_{\uloc}(\R;H^2(O)).$ We present
just the a priori estimates, which can be justified by the usual
Galerkin method and truncation of the domain.

  \begin{proof}[Proof of Theorem~\ref{thm:Leray_Stepanov}]
    We multiply~\eqref{eq:auxiliar_Leray-nonlinear} by $-P\Delta U$
    and we carefully treat the various terms. First note that
    \[
      \begin{aligned}
        \int_{O_i} (P\Delta U)\cdot(U\cdot\nabla)\,U
          & =   \sum_k\int_k^{k+1}\int_{D_i} (U\cdot\nabla)\,U\, P\Delta U\,dx\,dz\\
          &\leq C\|U\|_{L^4((k,k+1)\times D_i)}\|\nabla U\|_{L^4((k,k+1)\times D_i)}
                  \|\Delta U\|_{L^2((k,k+1)\times D_i)}\\
          &\leq C\|U\|_{H^1(O_i)} \|\nabla U\|_{H^1(O_i)} \|\Delta U\|_{L^2(O_i)},
      \end{aligned}
    \]
    since the constant of the Sobolev embedding $H^1\subset L^4$ are
    uniformly bounded in each of the strips $(k,k+1)\times D_i$,
    see~\cite[Lemma~2.1]{Gal1994b}. Moreover the standard regularity
    theory for the Stokes operator shows also that
    \[
      \int_{O_i} (P\Delta U)\cdot(U\cdot\nabla)\,U
        \leq C\|U\|_{H^1(O_i)}\|P\Delta U\|_{L^2(O_i)}^2.
    \]
    The term $\int_{O_i} (U\cdot\nabla)\,w$ is then estimated as
    follows, by using the same splitting into slices of width 1 in the
    $z$-direction and the Sobolev embedding $H^2\subset L^\infty$,
    \[
      \begin{aligned}
        &\int_{O_i} (U\cdot\nabla)\,w\, P\Delta U\leq C
        \|w\|_{L^2(O_i)}\|P\Delta U\|_{L^2(O_i)}^2,
      \end{aligned}
    \]
    The other term is estimated in the same way, and the contribution
    from the domain $O_0$ is handled with standard tools. Hence adding
    together the three terms, we finally we arrive at the differential
    inequality
    \[
      \frac{d}{dt}\|\nabla U\|^2+\big(\nu-C_1(\|\nabla U\|+\|\nabla
      w\|)\big)\|P\Delta U\|^2\leq C_2\frac{\|F\|^2}{\nu}
    \]
    We fix the viscosity large enough, so that
    \[
      \nu-C_1(\|\nabla U(0)\| - \|\nabla w(0)\|)
        = \nu-2 C_1\|\nabla w(0)\|
        > 0.
    \]
    and this is possible since $\|\nabla
    w(0)\|=\frac{\|\nabla\varphi\|}{\chi_0^2}|f(0)|\leq
    \frac{C}{\nu}\|f\|_{H^1_\uloc}$. Since $\|F(t)\|\in L^2_\uloc(\R)$
    this is enough to show existence of a local unique solution, in a
    time interval $[0,\delta[$ for some positive $\delta$.

    The next step is to show that, under the same assumptions, the
    solution is global. This is rather standard, since if we have the
    uniform bound on $\|\nabla w\|^2$ coming from
    Proposition~\ref{prop:basic_uloc},
    \[
    \sup_{t\in\R}\|\nabla w(t)\|^2\leq
    C\big(\nu+1+\frac{1}{\nu^2}\big)\|f\|_{H^1_\uloc(\R)}^2, 
    \]
    we can fix $\nu_0=\nu_0(f,O)$ large enough such that
    \[
    \text{if }\ \nu>\nu_0 \qquad\text{then}\qquad \nu-C_1\|\nabla
    w(t)\|\geq\frac{\nu}{2}, \qquad t\in\R.
    \]
    For such $\nu$ we are reduced to solve (possibly redefining the constants)
    \begin{equation*}
      \frac{d}{dt}\|\nabla U\|^2+c_1\Big(\frac{\nu}{2}-\|\nabla
      U\|)\Big)\|\nabla U\|^2\leq \frac{C_2}{\nu}\|F\|^2 
    \end{equation*}
    Hence, if we define $Z(t)$ as the solution of the following Cauchy problem
    \begin{equation}
      \label{eq:auxiliar_Leray-nonlinear2}
      \begin{cases}
        Z'+c_1\big(\frac{\nu}{2}-\sqrt{Z}\big) Z= \frac{C}{\nu}\|F\|^2
        \\
        Z(0)=\|\nabla U(0)\|^2
      \end{cases}
    \end{equation}
    we have that $\|\nabla U(t)\|^2\leq Z(t)$. We employ now a
    fixed point argument in the space of function which are continuous
    and bounded over $[0,+\infty]$ to show that $Z$ is well defined
    and bounded in the same interval.  Let be given $\overline{z}\in
    C_b(0,+\infty)$ such that
    \[
      0\leq\overline{z}\leq\frac{\nu^2}{4},
    \]
    Solve now the problem
    \[
      \begin{cases}
        z'+c_1\big(\frac{\nu}{2}-\sqrt{\overline{z}}\big) z=
        \frac{c_2}{\nu}\|F\|^2.
        \\
        z(0)=Z(0)
      \end{cases}
    \]
    Since {$\frac{\nu}{2}-C_1 \sqrt{\overline{Z}}\geq\frac{\nu}{4}$} and
    both $z(0)$ and $ \frac{c_2}{\nu}\|F\|^2$ are non-negative, it
    follows that $z(t)\geq0$. Hence $z$ satisfies
    \begin{equation}
      \label{eq:Ascoli}
      z'+\frac{c_1\nu}{4} z\leq
      \frac{c_2}{\nu}\|F\|^2.
    \end{equation}
    The same argument employed in the proof of
    Proposition~\ref{prop:intermediate_basic_uloc} shows that
    \[
      0\leq z(t)\leq z(0)+ \frac{5 c_2}{\nu}\|F\|_{L^2_\uloc}^2\leq
      C\Big(\nu+1+\frac{1}{\nu^5}\Big)\|f\|_{H^1_\uloc}^2.
    \]
    Then, if $\nu$ is large enough such that
    \begin{equation*}
      C\Big(\nu+1+\frac{1}{\nu^5}\Big)\|f\|_{H^1_\uloc}^2\leq\frac{\nu^2}{4}. 
    \end{equation*}

    Consider now the map $\Phi:\overline{z}\mapsto z$ and given
    $\tau>0$ define
    \[
      K_\tau:=\left\{\overline{z}\in C^0([0,\tau]):\ 0\leq
        \overline{z}\leq\frac{\nu^2}{4}\right\}. 
    \]
    Clearly we have $\Phi(K_\tau)\subseteq K_\tau$ and the map is
    relatively compact by Ascoli-Arzel\`a theorem since $z$ is
    Lipschitz continuous as solution of~\eqref{eq:Ascoli}. Hence
    $\Phi$ as a unique fixed point which is a solution
    to~\eqref{eq:auxiliar_Leray-nonlinear2}. Since $\tau>0$ is
    arbitrary this proves that $Z$ exists on the whole interval
    $[0,+\infty)$. A standard comparison argument shows that
    $\|\nabla U(t)\|^2\leq Z(t)\leq \frac{\nu^2}{4}$ for all positive $t$.
    
    This estimate implies, by using standard argument well established for
    the Navier-Stokes equations, that there exists a solution 
    \[
      U \in C_b(0,+\infty;H^1_0(O))\cap L^2_\uloc(0,+\infty;H^2(O))\cap
      H^1_\uloc(0,+\infty;L^2(O)) ,
    \]
    and due to the regularity proved the solution is unique.

    Next, one can construct a global solution, by solving the
    following family of problems in $[-n,\infty)\times O$,
    \[
      \begin{cases}
        \partial_t U_n - \nu\Delta U_n + (U_n\cdot\nabla)U_n + (U_n\cdot\nabla)w + (w\cdot\nabla)\,U_n + \nabla P_n = F,\\
        \nabla\cdot U_n = 0,\\
        U_n = 0,               \qquad\text{on }\partial O,\\
        U_n(-n,x) = -w(-n,x)   \qquad\text{in }O.
      \end{cases}
    \]
    Again by prolongation we define a velocity on the whole real line
    by
    \[
      \tilde{U}_n(t,x)=
      \begin{cases}
        U_n(t,x)\qquad t\geq -n,
        \\
        U(-n,x)\qquad t< -n.
      \end{cases}
    \]
    We can show that $\tilde{U}_n(t,x)$ converges to a solution
    $U$ such that
    \[
    U\in C_b(\R;H^1_0(O))\cap L^2_\uloc(\R;H^2(O))\cap
      H^1_\uloc(\R;L^2(O)).
    \]
    To end the proof we need to show that if the external force is
    almost periodic, then $U$ is almost periodic too. For this
    result we need a result of ``asymptotic equivalence,'' which is
    obtained as follows. Let us suppose the we have two solutions
    of~\eqref{eq:auxiliar_Leray-nonlinear} on the interval
    $[t_0,+\infty)$ corresponding to different initial data, but to
    the same external force. Then the difference
    $\widetilde{U}:=U_1-U_2$ satisfies
    \[
      \begin{cases}
        \partial_t\widetilde{U} - \nu\Delta\widetilde{U}
        +(U_1\cdot\nabla)\,U_1-(U_2\cdot\nabla)\,U_2+(\widetilde{U}\cdot\nabla)\,w+(w\cdot\nabla)\,\widetilde{U}
        + \nabla\widetilde{P} = 0,
        \\
        \nabla\cdot\widetilde{U} = 0,
        \\
        \widetilde{U} = 0, \qquad\text{on }\partial O,
        \\
        \widetilde{U}(t_0,x)= \widetilde{U}_0.
      \end{cases}
    \]
    By multiplication by $\widetilde{U}$ and usual integration by
    parts, we get
    \[
      \frac{1}{2}\frac{d}{dt}\|\widetilde{U}\|^2+\nu\|\nabla\widetilde{U}\|^2\leq\int_O
      (\widetilde{U}\cdot\nabla)\,(w+U_2)\widetilde{U}\,dx. 
    \]
    By using the same techniques employed before to handle unbounded
    domains we show that
    \[
      \frac{1}{2}\frac{d}{dt}\|\widetilde{U}\|^2+\big(\nu-C(\|\nabla
      w\|+\|\nabla U_2\|)\big)\|\nabla\widetilde{U}\|^2\leq 0.
    \]
    The uniform bounds on $w$ and $U_2$ in $H^1_0(O)$ imply that
    for large enough viscosity one has
    \[
    \frac{d}{dt}\|\widetilde{U}\|^2+\nu C_P\|\widetilde{U}\|^2\leq 0.
    \]
    This finally shows that
    \[
      \|\widetilde{U}\|^2\leq \|\widetilde{U}(t_0)\|^2\e^{-\nu
        C_P(t-t_0)},\qquad t\geq t_0. 
    \]
    The same argument employed in the proof of
    Theorem~\ref{thm:basic_flow_Stepanov} can be employed and, for
    each sequence $\{r_m\}\subset \R$ we can find a sub-sequence
    $\{r_{m_k}\}\subset\R$ and a function $\tilde{F}$ such that
    \[
      \sup_{t\in\R}\int_t^{t+1}
      \|F(\tau+r_{m_k})-\tilde{F}(\tau)\|^2\,d\tau\overset{k\to+\infty}{\longrightarrow}0.  
    \]
    Assume by contradiction that $U$ is not almost periodic, hence
    that there exist a sequence $\{h_m\}\subset\R$ and a function
    $\tilde{f}:\R\to\R$ such that
    \[
      \sup_{t\in
        \R}\int_t^{t+1}\|F(\tau+h_m)-\tilde{F}(\tau)\|^2\,d\tau\overset{m\to+\infty}{\longrightarrow}0,  
    \]
    and a constant $\delta_0>0$ and three sequences $\{t_p\}, \{h_{m_p}\}$
    and $\{h_{n_p}\}$ such that for all $p\in \N$,
    \[
      \sup_{t\in\R}\int_{t_p}^{t_p+1}\|U(\tau+h_{m_p})-U(\tau+h_{n_p})\|^2\,d\tau\geq\delta_0.
    \]
There exists  two real functions $\tilde{F}_1$ and $\tilde{F}_2$ such that
  \[
    \begin{aligned}
     &
     \sup_{t\in\R}\int_t^{t+1}\|F(\tau+t_p+h_{m_p})-\tilde{F}_1(\tau)\|^2\,d\tau\overset{k\to+\infty}{\longrightarrow}0. 
      \\
     &
     \sup_{t\in\R}\int_t^{t+1}\|F(\tau+t_p+h_{n_p})-\tilde{F}_2(\tau)\|^2\,d\tau\overset{k\to+\infty}{\longrightarrow}0. 
    \end{aligned}
  \]
  It  follows in a standard way that
  $\tilde{F}=\tilde{F}_1=\tilde{F}_2$ and we consider now the
  problem~\eqref{eq:auxiliar_Leray-nonlinear} with the two forces 
  \[
         F_{1 p}(t,x):=F(t+t_p+h_{m_p},x)
      \quad\text{and}\quad
      F_{2 p}(t,x):=F(t+t_p+h_{n_p},x),
      \]
  and the corresponding  solutions
  \[
    U_{1 p}(t,x):=U(t+t_p+h_{m_p},x)\quad\text{and}\quad
    U_{2 p}(t,x):=U(t+t_p+h_{n_p},x).
  \]
  By passing to the limit as $p\to+\infty$ we construct two
  solutions $U_1$ and $U_2$ corresponding to the same force
  $\tilde{F}$. In particular
  \[
    w_{1 p}\to w_1 \text{ in }C_{\loc}(\R;L^2),
      \qquad
    w_{2 p}\to w_2 \text{ in }C_{\loc}(\R;L^2).
  \]
  In addition we have 
  \begin{multline*}
    \delta_0
      \leq\int_{t_{p_s}}^{t_{p_s}+1}\|U(\tau+h_{m_{p_s}})-U(\tau+h_{n_{p_s}})\|^2\,d\tau =\\
      = \int_{0}^{1}\|U_{1 p_s}(\tau)-U_{2 p_s}(\tau)\|^2\,d\tau
      \longrightarrow \int_{0}^{1}\|U_{1}(\tau)-U_{2 }(\tau)\|^2\,d\tau.
  \end{multline*}
  On the other hand the asymptotic equivalence implies that
  $U_1=U_2$, since for all $t\in\R$,
  \[
    \|U_{1}(t)-U_{2 }(t)\|^2\leq   \|U_{1}(t_0)-U_{2 }(t_0)\|^2\e^{-\nu
      C_P(t-t_0)}
    \qquad t\geq t_0, 
  \]
  and letting $t_0\to-\infty$ we obtain a contradiction.

  To conclude the proof 
we show that the function $U\in \ste^2(\R;H^s(O))$ for all
$0\leq s<2$. In fact, take a sequence $\{r_n\}$ we can find a sub-sequence
    $\{r_{m_k}\}\subset\R$ and a function $\tilde{U}$ such that the
    sequence $\{U(\tau+r_{m_k})\}$ satisfies
    \[
      \sup_{t\in\R}\int_t^{t+1}
      \|U(\tau+r_{m_k})-\tilde{U}(\tau)\|^2\,d\tau\overset{k\to+\infty}{\longrightarrow}0.  
    \]
Hence the sequence $\{U(\tau+r_{m_k})\}$ is a Cauchy sequence in
$L^2_{\uloc}(\R;L^2(O))$, that is for every $\epsilon>0$ there is $N\in\N$
such that
\[
  \sup_{t\in\R}\int_t^{t+1} \|U(\tau+r_{m_p})-U(\tau+r_{m_s})\|^2\,d\tau\leq\epsilon
    \qquad p,s\geq N.
\]
Since $U \in H^1_\uloc(\R;L^2(O))\cap L^2_\uloc(\R;H^2(O))$, by
classical interpolation it follows that
\[
  U_{*}\in C(\R;H^{\sigma}(0,1;H^{2-2\sigma}(O))),
    \qquad 0\leq\sigma\leq1.
\] 
Hence we obtain that
\begin{multline*}
  \sup_{t\in\R}\int_t^{t+1} \|U(\tau+r_{m_p})-U(\tau+r_{m_s})\|^2_{H^s(O)}\,d\tau\leq\\
    \leq \sup_{t\in\R}\int_t^{t+1} \|U(\tau+r_{m_p})-U(\tau+r_{m_s})\|^{2-s}_{L^2(O)}
                  \|U(\tau+r_{m_p})-U(\tau+r_{m_s})\|^{s}_{H^2(O)}\,d\tau,
\end{multline*}
and by using H\"older inequality,
\[
  \begin{aligned}
    \int_t^{t+1} \|U(\tau+r_{m_p})-U(\tau+r_{m_s})\|^2_{H^s(O)}\,d\tau
      &\leq \left[\int_t^{t+1}\|U(\tau+r_{m_p})-U(\tau+r_{m_s})\|^{2}_{L^2(O)}\right]^{\frac{2-s}{2}}\cdot\\
      &\quad\left[\int_t^{t+1}\|U(\tau+r_{m_p})-U(\tau+r_{m_s})\|^{2}_{H^2(O)}\,d\tau\right]^{\frac{s}{2}}.
\end{aligned}
\]
Since 
\[
  \sup_{t\in\R} \int_t^{t+1}
    \|U(\tau+r_{m_p})\|^2_{H^2(O)}+\|U(\tau+r_{m_s})\|^{2}_{H^2(O)}\,d\tau\leq C,
\]
it follows that the sequence $\{U(\tau+r_{m_k})\}$ is a Cauchy sequence in
$L^2_{\uloc}(\R;H^2(O))$ as well, ending the proof.
 \end{proof}
\begin{remark}
The result with $f\in H^1(\R)$ follows in the same, even simpler, way.
\end{remark}
\section{Leray's problem in the framework of Besicovitch
  a.~p.~functions}
\label{sec:Besicovitch}
%
In this section we discuss the same problem in a more general
setting, and first we recall some definitions on almost periodic
solutions.
\subsection{Generalities on almost periodic
  functions}\label{sec:generalities_ap}
In the literature there are different definitions of \emph{almost
  periodic} functions and we need now to explain the precise setting
we are using.  We refer mainly to~\cite[ch.~I]{Bes1955} for further
details and references.  Let $\trig(\R)$ be the set of all
trigonometric polynomials, that is, $u\in\trig(\R)$ if there exist
$n\in\N$, $\xi_1,\dots,\xi_n\in\R$ and $u_1,\dots,u_n\in\C$ such that
\[
u(x) = \sum_{k=1}^n u_k\e^{\im\,\xi_k\,x},\qquad x\in\R.
\]
Next, a set $A\subseteq\R$ is \emph{relatively dense} if there exists
$L>0$ such that each interval of length $L$ contains an element of the
set $A$.
\begin{definition}[Bohr]
  \label{d:apbohr}
  A \emph{uniformly almost periodic function} ($\uap(\R)$) is a
  continuous function $f:\R\to\R$ such that there is a
  \emph{relatively-dense} set of $\varepsilon$-almost-periods. That
  is for all $\varepsilon>0$, there exist translations
  $T_\varepsilon>0$ of the variable $t$ such that
  \[
  |f(t+T_\varepsilon)-f(t)|\leq\varepsilon.
  \]
\end{definition}
It is easy to see that all trigonometric polynomials are almost
periodic according to the previous definition. Let $\uap(\R)$ be the
set of all uniformly almost periodic functions. Then $\uap(\R)$
coincides with the closure of $\trig(\R)$ with respect to the
\emph{sup-norm} $\|\cdot\|_{L^\infty}$. Alternatively, as recalled in
Section~\ref{sec:Stepanov}, a function $f\in \uap(\R)$ if the set
$\{f(\tau+\cdot):\tau\in\R\}$ of translates of $f$ is relatively
compact in $C(\R)$.  A more general notion of almost periodicity was
introduced by Stepanov in 1925.  To this end for $p\geq1$ and $r>0$,
define the norm
\[
\|f\|_{\ste^p,r} := \sup_{t\in\R}\Bigl(\frac{1}{r}\int_{t}^{t+r}
|f(s)|^p \,ds\Bigr)^{\frac1p}.
\]
Then, the space $\ste^p(\R)$ is the closure of $\trig(\R)$ with
respect to the norm $\|\cdot\|_{\ste^p,r}$ above. Notice also that
while the norm depends on $r$, the topology is independent of the
value of $r$, hence we re-obtain the definition used in
Section~\ref{sec:Stepanov} (Cf.~\cite{Bes1955}).

The definition was later extended by Weyl in 1927 by considering the
closure $\wie^p(\R)$ of trigonometric polynomials with respect to
the semi-norm
\begin{equation*}
\|f\|_{\wie^p} := \lim_{r\to\infty}\|f\|_{\ste^p,r}.
\end{equation*}
Finally, Besicovitch~\cite{Bes1955} defined the space $\bes^p(\R)$
as the closure of $\trig(\R)$ with respect to the \textit{semi-norm}
\[
\|f\|_{\bes^p} := \limsup_{R\to+\infty}\Bigl(\frac{1}{2R}\int_{-R}^R
|f(s)|^p\,ds\Bigr)^{\frac1p}.
\]
Notice that one can have $\|f\|_{\bes^p}=0$ even though
$f\not\equiv0$. For example this happens if $f$ is in $L^q(\R)$ with
$q>p$ or $f\in L^\infty(\R)$ and $|f(x)|\to0$ as $|x|\to\infty$. One
has the following strict inclusions
\begin{equation*}
\uap(\R) \subset\ste^p(\R) \subset\wie^p(\R)
\subset\bes^p(\R),\quad\text{for any } p\in]1,+\infty[,
\end{equation*}
(with obvious inclusions with different values of $p$). It turns out
that the spaces $\bes^p(\R)$ of Besicovitch almost periodic functions
are among the ``largest possible'' compatible with the treatment of
partial differential equations as we shall see in
Proposition~\ref{prop:riesz}.  Let us focus on the case $p=2$, since
$\bes^2(\R)$ has an Hilbert structure.  Given $f\in L^1_\loc(\R)$,
define
\[
\omean(f) := \limsup_{R\to\infty} \frac{1}{2R}\int_{-R}^R f(t)\,dt.
\]
The \emph{mean operator} $\mean(f)$ is defined as the above quantity
when the limit exists. Given $f\in\bes^1(\R)$, the \emph{(generalized)
  Fourier coefficients} of $f$ are defined as follows
\[
a_\lambda(f) := \mean(f(t)\e^{\im\lambda t})
\]
and the set
\[
\sigma(f) := \{\lambda\in\R: a_\lambda(f)\neq0\},
\] 
the \emph{spectrum} of $f$, is at most countable.

Define the equivalence relation $\apequiv$ as $f\apequiv g$ if
$a_\lambda(f-g) = 0$ for all $\lambda\in\R$. As stated above,
$|\cdot|_{\bes^p}$ is not a norm, as it can be zero on nonzero
functions. It turns out that the quotient space $\bes^p/\apequiv$ is a
Banach space (see~\cite{Bes1955}).  If $f,g\in\bes^2(\R)$ then the
mean $\mean(f g)$ is well-defined and $\mean(f\,
g)=\mean({f}_1\,{g}_1)$ if $f\apequiv{f}_1$ and $g\apequiv{g}_1$. The
space $\bes^2/\apequiv$ is an Hilbert space when endowed with the
scalar product $\scal{f,g}_{\bes^2}=\mean(f g)$ and we have the
fundamental result due to Besicovitch.
\begin{proposition}
  \label{prop:riesz}
  The exponential functions $t\mapsto\e^{\im\lambda t}$ are an
  orthonormal Hilbert basis for $\bes^2(\R)$. In different words, any
  $f\in\bes^2(\R)$ can be represented by its generalized Fourier
  series
   \begin{equation}
     \label{eq:generalized_Fourier}
  f(t)\apequiv\sum_{\lambda\in\sigma(f)}
  a_\lambda(f)\e^{\im\lambda t},
  \end{equation}
 and $\sum_\lambda |a_\lambda(f)|^2<\infty$.  

  Conversely, if one has a generalized series as above with square
  summable coefficients (as a generalized series), then there is a
  function $f\in\bes^2(\R)$ having the series as its own generalized
  Fourier series.
\end{proposition}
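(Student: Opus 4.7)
The plan is to realize $\bes^2(\R)/\apequiv$ as the abstract Hilbert space completion of $\trig(\R)$ under the pre-Hilbertian inner product $\scal{f,g}=\mean(f\overline{g})$ and then invoke the standard characterization of a complete orthonormal basis in a Hilbert space. To do so I would first check by direct computation that the family $(t\mapsto\e^{\im\lambda t})_{\lambda\in\R}$ is orthonormal: the mean
\[
\mean\big(\e^{\im\lambda t}\overline{\e^{\im\mu t}}\big)
  = \lim_{R\to\infty}\frac{1}{2R}\int_{-R}^R \e^{\im(\lambda-\mu)s}\,ds
\]
is equal to $1$ when $\lambda=\mu$ and, when $\lambda\neq\mu$, reduces to $\lim_R \frac{\sin((\lambda-\mu)R)}{(\lambda-\mu)R}=0$. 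In particular, on a trigonometric polynomial $g=\sum g_k\e^{\im\xi_k t}$ one has $\|g\|_{\bes^2}^2=\sum|g_k|^2$, so $\|\cdot\|_{\bes^2}$ is a genuine Hilbert norm on $\trig(\R)$ and the $\limsup$ in its definition is actually a limit there.

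Next I would extend the inner product $\mean(f\overline{g})$ from $\trig(\R)$ to all of $\bes^2(\R)$: on trigonometric polynomials $\mean(f\overline g)$ is explicit and satisfies the Cauchy--Schwarz inequality $|\mean(f\overline g)|\leq\|f\|_{\bes^2}\|g\|_{\bes^2}$, simply by Cauchy--Schwarz on each truncated integral and passage to the $\limsup$. Since $\bes^2(\R)$ is by construction the completion of $\trig(\R)$ for $\|\cdot\|_{\bes^2}$, this bilinear form extends by continuity to $\bes^2(\R)\times\bes^2(\R)$, descends to the quotient $\bes^2/\apequiv$ (where $f\apequiv g$ is exactly the condition $\|f-g\|_{\bes^2}=0$, because $\mean(h\,\e^{\im\lambda t})=0$ for every $\lambda$ forces $\|h\|_{\bes^2}^2=0$ by Parseval on trigonometric approximants), and endows $\bes^2/\apequiv$ with a Hilbert-space structure.

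With these preliminaries in place, the exponentials are an orthonormal system whose finite linear span is dense by the very definition of $\bes^2(\R)$; the standard Hilbert-space theorem then asserts they form a complete orthonormal basis. Applied to any $f\in\bes^2(\R)$ this gives Parseval's identity $\|f\|_{\bes^2}^2=\sum_\lambda|a_\lambda(f)|^2<+\infty$, whence $\sigma(f)$ is at most countable, and the partial sums of $\sum_\lambda a_\lambda(f)\e^{\im\lambda t}$ converge in $\|\cdot\|_{\bes^2}$ to an element whose generalized Fourier coefficients agree with those of $f$ by continuity of each functional $g\mapsto a_\mu(g)$; this is exactly~\eqref{eq:generalized_Fourier}. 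Conversely, given square-summable data $(a_\lambda)_{\lambda\in\Lambda}$, the partial sums $\sum_{\lambda\in F}a_\lambda\e^{\im\lambda t}$ (finite $F\subseteq\Lambda$) form a Cauchy net in $\bes^2/\apequiv$ by orthonormality; completeness of the quotient produces a limit $f\in\bes^2(\R)$, and continuity of the coefficient functionals identifies its Fourier coefficients with the prescribed ones.

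The only real analytic obstacle lies in the preliminary verification that the $\limsup$ defining $\|\cdot\|_{\bes^2}$ behaves as an honest $L^2$-type norm: showing that the pointwise limit exists on $\trig(\R)$ (handled by the explicit computation above) and that Cauchy--Schwarz survives the passage to the $\limsup$ on the whole space, so that the mean inner product is well-defined and jointly continuous. Once these ingredients are secured, the theorem reduces to the abstract characterization of orthonormal bases, and the countability of $\sigma(f)$ follows automatically from Parseval.
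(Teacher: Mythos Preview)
The paper does not supply its own proof of this proposition; it is stated as a classical result due to Besicovitch, with a reference to \cite{Bes1955}. Your argument is the standard one and is correct in outline: realize $\bes^2/\apequiv$ as the Hilbert-space completion of $\trig(\R)$, check orthonormality of the exponentials by direct computation of the mean, and use that finite linear combinations are dense (this is the very definition of $\bes^2$) to conclude completeness of the orthonormal system via abstract Hilbert-space theory.

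One small point of logical hygiene: your parenthetical identification of the relation $\apequiv$ (vanishing of all Fourier coefficients) with the relation $\|f-g\|_{\bes^2}=0$ is itself essentially Parseval, so invoking it while constructing the quotient is circular. The clean route is to quotient first by the kernel of the seminorm $\|\cdot\|_{\bes^2}$ (the standard completion construction), establish Parseval on that quotient, and only then observe \emph{a posteriori} that the two equivalence relations coincide---one inclusion being immediate from Cauchy--Schwarz ($|a_\lambda(h)|\leq\|h\|_{\bes^2}$), the other being Parseval itself. With that reordering your proof is complete.
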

Hence, we can identify a function by means of its generalized series
as follows
\begin{equation}
  \label{eq:bes_parseval}
  \bes^2(\R) := \Big\{ f\in L^2_{loc}(\R):\ 
  \|f\|_{\bes^2}^2:=\sum_{\lambda\in\sigma(f)}|a_\lambda(f)|^2<+\infty
  \Big\}
\end{equation}
and the identification $f(t)\apequiv
\sum_{\lambda\in\sigma(f)}a_\lambda(f)\e^{\im \lambda t}$ holds in
the sense of convergence in $\bes^2(\R)$.

We turn to the pipe problem~\eqref{eq:pipe-global} in the \emph{almost
  periodic} case. Since the problem is linear, it is reasonable to
find a solution in terms of Fourier transform (or series). It is clear
that, once the problem is solved in the Fourier space, we are given
with coefficients $a(\xi)\in \C$ for $\xi\in\R$ and we are left with
the problem of reconstructing the solution by inverse Fourier
transforming. Since the convergence for classical Fourier series is
robust in $L^2$ for $\ell^2$ coefficients, likewise in the context of
almost periodic functions we consider the (correct) space $\bes^2$,
for which the analogous of the Riesz-Fischer theorems holds true.

In the following we shall also need spaces of the type
{Sobolev-Besicovitch}, which are defined in the following way.
\begin{definition}
  Given a real $s>0$, a function $f\in \bes^2(\R)$ belongs to
  $\bes^{s,2}(\R)$ if
  \[
  \|f\|_{\bes^{s,2}}^2 :=
  \sum_{\lambda\in\sigma(f)}(1+|\lambda|^2)^{s}|a_\lambda(f)|^2<+\infty.
  \]
\end{definition}
In particular, if $f\in\bes^{1,2}(\R)$, then the Fourier series for
the (formal) derivative $f'$ of $f$ is convergent and defines an
element of $\bes^2(\R)$.
\subsection{Construction of the ``basic flow'' in the Besicovitch
  setting}\label{s:bes_basic}
In this section we solve problem~\eqref{eq:pipe-global} in the
unknowns $(w,\pi)$, with the Besicovitch meaning. As a by-product
of the method we obtain also a new proof of the existence of the
basic flow in the periodic case.
\begin{theorem}
  \label{thm:bes_basic}
  Given $f\in\bes^{1,2}(\R)$ there are $w\in\bes^2(\R;H^2(D))$, with
  $\partial_t w\in\bes^2(\R;L^2(D))$, and $\pi\in\bes^2(\R)$ such
  that
  \[
  \partial_t w - \nu\Delta w\apequiv \pi
  \qquad\text{and}\qquad
  \int_D w(t,x)\,dx\apequiv f(t).
  \]
  Moreover, $w$ and $\pi$ are unique up to identification as almost
  periodic functions.  Finally, there exists $c>0$ such that
  \[
  \begin{gathered}
    \|\Delta w\|_{\bes^2(L^2)}\leq c\|f\|_{\bes^2} +
    \frac{c}{\nu}\|f'\|_{\bes^2},
    \\
    \|\pi\|_{\bes^2} + \|\partial_t w\|_{\bes^2(L^2(D))}\leq
    c\nu\|f\|_{\bes^2} + c\|f'\|_{\bes^2}. 
  \end{gathered}
  \]
\end{theorem}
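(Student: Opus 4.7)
The plan is to exploit the Hilbert structure of $\bes^2(\R)$ afforded by Proposition~\ref{prop:riesz} and reduce the linear inverse problem~\eqref{eq:pipe-global} to a one-parameter family of resolvent-type elliptic problems indexed by the spectrum $\sigma(f)$. Writing the generalized Fourier expansions
\[
  f(t)\apequiv\sum_{\lambda\in\sigma(f)}a_\lambda(f)\,\e^{\im\lambda t},
  \qquad
  w(t,x)\apequiv\sum_{\lambda}w_\lambda(x)\,\e^{\im\lambda t},
  \qquad
  \pi(t)\apequiv\sum_{\lambda}\pi_\lambda\,\e^{\im\lambda t},
\]
and inserting them in the equation and the flux condition formally decouples the system into, for every $\lambda\in\sigma(f)$, the stationary problem
\[
  \im\lambda\,w_\lambda-\nu\Delta w_\lambda=\pi_\lambda\,\uno\ \text{in}\ D,
  \qquad
  w_\lambda|_{\partial D}=0,
  \qquad
  \int_D w_\lambda\,dx=a_\lambda(f).
\]
Expanding $w_\lambda=\sum_k c_{\lambda,k}e_k$ along the Dirichlet eigenbasis $(e_k,\lambda_k)$ gives $c_{\lambda,k}=\pi_\lambda\beta_k/(\im\lambda+\nu\lambda_k)$, and the flux constraint collapses to the scalar identity $\pi_\lambda\,S_\lambda=a_\lambda(f)$ with
\[
  S_\lambda:=\sum_{k\geq1}\frac{\beta_k^2}{\im\lambda+\nu\lambda_k}.
\]
The proof then reduces to (i) producing a quantitative lower bound on $|S_\lambda|$ so that $\pi_\lambda$ is well-defined; (ii) translating the mode-wise bounds into the announced $\bes^2$ estimates; (iii) invoking the Riesz--Fischer half of Proposition~\ref{prop:riesz} to sum the resulting series.

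Step (i) is the technical heart and the step I expect to be the main obstacle. Splitting real and imaginary parts,
\[
  \operatorname{Re}S_\lambda=\sum_k\frac{\nu\lambda_k\beta_k^2}{\nu^2\lambda_k^2+\lambda^2},
  \qquad
  \operatorname{Im}S_\lambda=-\lambda\sum_k\frac{\beta_k^2}{\nu^2\lambda_k^2+\lambda^2},
\]
and retaining only the $k=1$ contribution (legitimate since $\beta_1=(\uno,e_1)\neq0$, as the first Dirichlet eigenfunction on the smooth connected $D$ has a constant sign) gives at once
\[
  |S_\lambda|^2\geq\frac{\beta_1^4(\nu^2\lambda_1^2+\lambda^2)}{(\nu^2\lambda_1^2+\lambda^2)^2}=\frac{\beta_1^4}{\nu^2\lambda_1^2+\lambda^2},
\]
hence $|S_\lambda|^{-2}\leq C(D)(\nu^2+\lambda^2)$ uniformly in $\lambda\in\R$. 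This yields the mode-wise bound $|\pi_\lambda|^2\leq C(\nu^2+\lambda^2)|a_\lambda(f)|^2$. From the identity $\lambda^2\|w_\lambda\|^2=|\pi_\lambda|^2\sum_k \lambda^2\beta_k^2/(\lambda^2+\nu^2\lambda_k^2)\leq|\pi_\lambda|^2$ one controls $\partial_t w_\lambda$, and by rewriting the equation as $\nu\Delta w_\lambda=\im\lambda\,w_\lambda-\pi_\lambda\uno$ one deduces $\nu^2\|\Delta w_\lambda\|^2\leq 4|\pi_\lambda|^2$.

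Summing these estimates over $\lambda\in\sigma(f)$ and using the Parseval identity~\eqref{eq:bes_parseval} together with $\|f'\|_{\bes^2}^2=\sum_\lambda\lambda^2|a_\lambda(f)|^2$ yields precisely the three bounds in the statement, and shows in particular that the partial sums of $w$, $\partial_t w$ and $\pi$ are Cauchy, respectively, in $\bes^2(\R;H^2(D))$, $\bes^2(\R;L^2(D))$ and $\bes^2(\R)$. The Riesz--Fischer part of Proposition~\ref{prop:riesz}, applied componentwise to the vector-valued extensions, produces limits in these spaces, and by linearity the equation and the flux condition are satisfied in the $\apequiv$ sense. Uniqueness is then immediate: the difference of two solutions has all generalized Fourier coefficients satisfying the homogeneous mode problem with vanishing flux, whence the scalar relation $\pi_\lambda S_\lambda=0$ together with $S_\lambda\neq0$ forces every mode, and therefore the difference itself, to vanish modulo $\apequiv$.
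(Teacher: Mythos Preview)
Your proof is correct and follows the same overall strategy as the paper: decompose via the generalized Fourier series of Proposition~\ref{prop:riesz}, solve~\eqref{eq:pipe-global} mode by mode, bound the scalar inversion factor, and sum via Parseval. Your $S_\lambda$ is exactly the paper's $a_\xi=\int_D W_\xi\,dx$ with $W_\xi=(\im\xi-\nu\Delta)^{-1}\uno$; the formulas $\hatpi(\xi)=\hatf(\xi)/a_\xi$ and $\hatw(\xi)=\hatf(\xi)W_\xi/a_\xi$ are the same.

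The one genuine difference is in the key step, the lower bound on $|S_\lambda|=|a_\xi|$. The paper does not expand in the spatial eigenbasis; instead it derives the identity $a_\xi=\nu\|\nabla W_\xi\|_{L^2}^2-\im\xi\|W_\xi\|_{L^2}^2$ (Lemma~\ref{lem:bes_basic_props}), introduces the rescaling $W_\xi=\nu^{-1}\tildeW_{\xi/\nu}$, and proves a separate continuity/asymptotics lemma (in particular $\xi^2\|W_\xi\|_{L^2}^2\to1$ as $|\xi|\to\infty$) so that the required suprema over $|\xi/\nu|\leq1$ and $|\xi/\nu|\geq1$ are finite by compactness. Your argument is more elementary: keeping only the $k=1$ term in the spectral expansion, together with $\beta_1\neq0$, gives the explicit bound $|S_\lambda|^{-2}\leq\beta_1^{-4}(\nu^2\lambda_1^2+\lambda^2)$ directly, with no asymptotic analysis needed. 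This is shorter and produces explicit constants; the paper's route, in exchange, yields the asymptotic behaviour of $W_\xi$ as $|\xi|\to\infty$, which is of some independent interest but not required for the theorem itself. Your subsequent bounds $\lambda^2\|w_\lambda\|^2\leq|\pi_\lambda|^2$ and $\nu^2\|\Delta w_\lambda\|^2\leq4|\pi_\lambda|^2$ are a clean replacement for the paper's case-by-case estimates involving $n_0,n_1,n_2$.
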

In order to prove the theorem, we restate the problem by spectral
analysis in terms of Fourier transform with respect to the time
variable $t$ (with conjugate variable $\xi$). Once Fourier
transformed, problem~\eqref{eq:pipe-global} reads as follows: Find
$(\hatw,\hatpi)$ such that
\begin{equation}
  \label{eq:bes_basic_fou}
  \begin{cases}
    \im\, \xi\,\hatw(\xi,x) - \nu\Delta\hatw(\xi,x) = \hatpi(\xi), &\qquad
    x\in D,\ \xi\in\R,
    \\
    \hatw(\xi,x) = 0, &\qquad x\in\partial D,\ \xi\in\R,
    \\
    \int_D \hatw(\xi,x)\,dx = \hatf(\xi) &\qquad \xi\in\R.
  \end{cases}
\end{equation}
Clearly, the same result follows by a decomposition in Fourier series. 
The first equation yields $\hatw(\xi,x) = \hatpi(\xi) W_\xi(x)$, where
\[
W_\xi(x) := \bigl((\im\,\xi\,\mathrm{Id} - \nu\Delta)^{-1}\uno\bigr)(x)
\]
is defined to be the solution to the linear, stationary, and complex
system
\begin{equation}
  \label{eq:bes_basic_eqW}
  \begin{cases}
    \im \,\xi\, W_\xi - \nu\Delta W_\xi = 1,	&\qquad x\in D, 
    \\
    W_\xi(x) = 0,				&\qquad x\in\partial D,
  \end{cases}
\end{equation}
parametrized by $\xi\in\R$. 
Set
\[
a_\xi := \int_D W_\xi(x)\,dx,
\]
then by Fourier transforming the flux condition
in~\eqref{eq:pipe-global} we get $\hatf(\xi) = a_\xi\hatpi(\xi)$ and
in conclusion the solution to~\eqref{eq:pipe-global} (or, more
precisely, to~\eqref{eq:bes_basic_fou}), is given by
\begin{equation}
  \label{eq:bes_basic_formal}
  \hatpi(\xi) = \frac{1}{a_\xi}\hatf(\xi)
    \qquad\text{and}\qquad
  \hatw(\xi,x) = \frac{1}{a_\xi}\hatf(\xi)W_\xi(x).
\end{equation}
The problem reduces to analyse the behavior of the two terms $a_\xi$
and $W_\xi$ with respect to $\xi\in\R$. The main properties are
summarised in the following lemma.
\begin{lemma}
  \label{lem:bes_basic_props}
  For every $\xi\in\R$ it holds
  \begin{enumerate}
  \item $a_\xi = \nu\int_D |\nabla W_\xi(x)|^2\,dx - \im \xi\int_D
    |W_\xi(x)|^2\,dx$,
  \item $\nu\int_D \Delta W_\xi(x)\,dx = \im \xi a_\xi - 1$,
  \item $\nu^2\int_D |\Delta W_\xi(x)|^2\,dx + \xi^2\int_D
    |W_\xi(x)|^2\,dx = 1$.
  \end{enumerate}
\end{lemma}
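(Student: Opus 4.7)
The plan is to derive all three identities directly from the defining equation~\eqref{eq:bes_basic_eqW} by three different test procedures, exploiting the zero boundary condition on $W_\xi$.

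For \textbf{(1)}, I would multiply equation~\eqref{eq:bes_basic_eqW} by the complex conjugate $\overline{W_\xi}$ and integrate over $D$. The left-hand side yields $\im\xi\int_D|W_\xi|^2\,dx + \nu\int_D|\nabla W_\xi|^2\,dx$ after an integration by parts (using $W_\xi|_{\partial D}=0$), while the right-hand side gives $\int_D \overline{W_\xi}\,dx = \overline{a_\xi}$. Taking complex conjugates produces the stated formula.

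For \textbf{(2)}, the idea is to integrate equation~\eqref{eq:bes_basic_eqW} over $D$ against the constant function $1$. Recalling the normalization $|D|=1$ adopted in Section~\ref{sec:basic_Stepanov}, the right-hand side contributes $1$, while the left-hand side becomes $\im\xi\,a_\xi - \nu\int_D\Delta W_\xi\,dx$, which rearranges to the stated identity.

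For \textbf{(3)}, the natural approach is to take the squared $L^2(D)$-norm of both sides of~\eqref{eq:bes_basic_eqW}, since the right-hand side is just $1$ with $\|1\|^2_{L^2(D)}=|D|=1$. Expanding
\[
|\im\xi W_\xi - \nu\Delta W_\xi|^2
= \xi^2|W_\xi|^2 + \nu^2|\Delta W_\xi|^2 + \im\xi\nu\bigl(\overline{W_\xi}\Delta W_\xi - W_\xi\Delta\overline{W_\xi}\bigr),
\]
the cross term vanishes upon integration: by integrating by parts twice (with $W_\xi=\overline{W_\xi}=0$ on $\partial D$), both $\int_D\overline{W_\xi}\Delta W_\xi\,dx$ and $\int_D W_\xi\Delta\overline{W_\xi}\,dx$ equal $-\int_D|\nabla W_\xi|^2\,dx$, hence cancel. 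The remaining two real terms give exactly identity (3).

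There is no real obstacle here; the three properties are essentially the energy identities obtained by testing against $\overline{W_\xi}$, against $\uno$, and by computing the $L^2$-norm of the equation, and each uses only the homogeneous Dirichlet boundary condition together with $|D|=1$. The non-trivial content is (3), which displays the perfect cancellation that will eventually control the size of $a_\xi$ and $W_\xi$ uniformly in $\xi$ and is the key ingredient to invert~\eqref{eq:bes_basic_formal} in the Besicovitch framework.
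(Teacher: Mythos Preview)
Your arguments for (1) and (2) coincide with the paper's: testing against $\overline{W_\xi}$ (equivalently, conjugating the equation and testing against $W_\xi$) and integrating the equation over $D$.

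For (3) you take a genuinely different route. The paper conjugates~\eqref{eq:bes_basic_eqW}, multiplies by $\Delta W_\xi$, integrates by parts, and then invokes the already proved identities (1) and (2) to eliminate the mixed terms. Your approach---computing $\|\im\xi W_\xi - \nu\Delta W_\xi\|_{L^2}^2 = \|\uno\|_{L^2}^2 = 1$ directly---is more self-contained: the cross term vanishes by a single integration by parts without appealing to (1) or (2). Both arguments are correct and of comparable length; yours has the minor advantage of making (3) logically independent of the first two properties, while the paper's version perhaps makes the algebraic link among the three identities more visible.
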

\begin{proof}
  To prove the first property, take the complex conjugate of the
  equation satisfied by $W_\xi$, multiply by $W_\xi$ and integrate by
  parts, obtaining
  \[
  \begin{aligned}
    \int_D W_\xi(x)\,dx
    & = \int_D(-\im \xi|W_\xi|^2 - \nu W_\xi\Delta
    \overline{W_\xi})\,dx
    \\
    & = \nu\int_D |\nabla W_\xi(x)|^2\,dx - \im \xi\int_D |W_\xi(x)|^2\,dx.
  \end{aligned}
  \]
  For the second property, just integrate the equation for $W_\xi$ on $D$.
  In order to prove the third, take again the complex conjugate of the equation
  for $W_\xi$, but this time multiply by $\Delta W_\xi$. Next, integrate by
  parts and use the first two properties to get
  \[
  \begin{aligned}
    \im \xi a_\xi - 1
    & = \nu\int_D \Delta W_\xi\,dx
    = - \nu^2\int_D|\Delta W_\xi|^2\,dx + \im\nu\xi\int_D |\nabla
    W_\xi|^2\,dx
    \\
    & = - \nu^2\int_D|\Delta W_\xi|^2\,dx + \im \xi a_\xi - \xi^2\int_D |W_\xi|^2\,dx,
  \end{aligned}
  \]
  which proves the equality.
\end{proof}
Next, we need to understand the growth/decay of $a_\xi$ and $W_\xi(x)$
with respect to $\xi$, in order to show that the formal
expression~\eqref{eq:bes_basic_formal} defines a solution (in a
suitable sense).
\begin{lemma}
  The map $\xi\mapsto W_\xi$ is continuous on $\R$ with values in
  $H^2(D)\cap H_0^1(D)$. Moreover, as $|\xi|\to+\infty$ we have
  $W_\xi\to0$ in $H^2(D)$ and also
  \begin{equation}
    \label{eq:bes_basic_atinfty}
    \lim_{|\xi|\to +\infty} \xi\|\nabla W_\xi\|_{L^2}^2=0,
    \qquad\text{and}\qquad
    \lim_{|\xi|\to +\infty}  \xi^2\|W_\xi\|_{L^2}^2=1.
  \end{equation}
  Finally, the map $\xi\to a_\xi$ is continuous on $\R$ with
  values in $\R$ and 
  \[
  \lim_{|\xi|\to +\infty}  a_\xi= -\frac{1}{\xi}.
  \]
\end{lemma}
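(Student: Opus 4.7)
The plan is to exploit the energy identity of property~3 in Lemma~\ref{lem:bes_basic_props}, applied both to $W_\xi$ itself and to differences $W_{\xi_1}-W_{\xi_2}$, together with a singular-perturbation analysis of~\eqref{eq:bes_basic_eqW} as $|\xi|\to\infty$. For continuity, the difference $v:=W_{\xi_1}-W_{\xi_2}$ solves~\eqref{eq:bes_basic_eqW} with parameter $\xi_1$ and source $-\im(\xi_1-\xi_2)W_{\xi_2}$; the same computation that produces property~3 (take the $L^2$-norm squared of both sides and note that the cross term vanishes because $\int v\,\overline{\Delta v}=-\|\nabla v\|^2$ is real) yields
\[
  \nu^2\|\Delta v\|^2+\xi_1^2\|v\|^2=(\xi_1-\xi_2)^2\|W_{\xi_2}\|^2.
\]
Combined with the standard $H^2$ elliptic regularity $\|v\|_{H^2}\le C_D\|\Delta v\|$ for $v\in H^2\cap H^1_0(D)$, this gives continuity of $\xi\mapsto W_\xi$ with values in $H^2\cap H^1_0(D)$; continuity of $a_\xi=\int_D W_\xi$ then follows immediately via $|D|<\infty$.

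For the behaviour as $|\xi|\to\infty$, property~3 already supplies the uniform bounds $\nu\|\Delta W_\xi\|\le 1$ and $|\xi|\|W_\xi\|\le 1$, so $W_\xi\to 0$ in $L^2(D)$. The delicate step is to upgrade $\xi^2\|W_\xi\|^2\le 1$ to $\xi^2\|W_\xi\|^2\to 1$, equivalently $\|R_\xi\|_{L^2}\to 0$, where
\[
  R_\xi:=1-\im\xi W_\xi=-\nu\Delta W_\xi
\]
solves the singular-perturbation problem
\[
  -\nu\Delta R_\xi+\im\xi R_\xi=0\text{ in }D,\qquad R_\xi=1\text{ on }\partial D.
\]
A direct computation shows that $|R_\xi|^2$ is subharmonic, since $\Delta|R_\xi|^2=2|\nabla R_\xi|^2+2\mathrm{Re}(\overline{R_\xi}\Delta R_\xi)=2|\nabla R_\xi|^2$ (the last term is purely imaginary by the equation), whence $|R_\xi|\le 1$ on $\overline D$ by the maximum principle. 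Multiplying the equation by $\overline{R_\xi}$, integrating by parts and using the divergence theorem on the boundary contribution yields the \textit{a priori} estimate $\nu\|\nabla R_\xi\|^2\le|\xi|$. Testing the equation against $\chi_\varepsilon^2\overline{R_\xi}$, with a cutoff $\chi_\varepsilon\in C^\infty_c(D)$ supported at distance $\ge\varepsilon/2$ from $\partial D$ and satisfying $|\nabla\chi_\varepsilon|\le C/\varepsilon$, then taking imaginary parts and applying Cauchy--Schwarz, produces
\[
  \|R_\xi\|_{L^2(D)}^2 \le C\varepsilon + \frac{C\sqrt{\nu}}{\sqrt{|\xi|\varepsilon}},
\]
and the optimal choice $\varepsilon=(\nu/|\xi|)^{1/3}$ yields the quantitative boundary-layer decay $\|R_\xi\|_{L^2}^2=O((\nu/|\xi|)^{1/3})$.

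The remaining statements are routine. From $\xi^2\|W_\xi\|^2=1-\|R_\xi\|^2\to 1$ and elliptic regularity one gets $W_\xi\to 0$ in $H^2(D)$. The integration by parts $\|\nabla W_\xi\|^2\le\|W_\xi\|\|\Delta W_\xi\|$ combined with $|\xi|\|W_\xi\|\le 1$ gives $|\xi|\|\nabla W_\xi\|^2\le\|\Delta W_\xi\|\to 0$. Finally, the splitting $a_\xi=\nu\|\nabla W_\xi\|^2-\im\xi\|W_\xi\|^2$ from property~1 gives $\mathrm{Re}\,a_\xi=o(1/|\xi|)$ and $\mathrm{Im}\,a_\xi=-\xi\|W_\xi\|^2=-1/\xi+o(1/|\xi|)$, so that $\xi a_\xi\to -\im$, i.e.\ $a_\xi\sim-\im/\xi$ as $|\xi|\to\infty$, which is the claimed asymptotic.

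The only technically substantial step is the boundary-layer decay $\|R_\xi\|_{L^2}\to 0$; once the subharmonicity bound $|R_\xi|\le 1$ and the $H^1$ bound $\nu\|\nabla R_\xi\|^2\le|\xi|$ are in hand, the cutoff/optimization argument above is elementary, and all the other conclusions reduce to the two energy identities of Lemma~\ref{lem:bes_basic_props} together with standard elliptic regularity.
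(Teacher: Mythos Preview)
Your proof is correct and takes a genuinely different route from the paper's.

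For continuity, the paper multiplies the equation for $V=W_{\xi_0}-W_\xi$ by $\Delta\overline V$, then separates real and imaginary parts to bound $\|\nabla V\|$ and $\|\Delta V\|$; this forces a case distinction since the imaginary-part step degenerates at $\xi_0=0$. Your approach---taking the $L^2$-norm of both sides of $\im\xi_1 v-\nu\Delta v=-\im(\xi_1-\xi_2)W_{\xi_2}$ and using that the cross term is purely imaginary---is the same computation that produces property~3 of Lemma~\ref{lem:bes_basic_props}, and gives the clean identity $\nu^2\|\Delta v\|^2+\xi_1^2\|v\|^2=(\xi_1-\xi_2)^2\|W_{\xi_2}\|^2$ valid for \emph{all} $\xi_1,\xi_2$, with no special treatment of $\xi_2=0$.

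For the asymptotics as $|\xi|\to\infty$, the paper argues by soft compactness: $\nu\|\Delta W_\xi\|\le 1$ gives weak $H^2$-precompactness, the bound $|\xi|\,\|W_\xi\|\le 1$ forces $W_\xi\to 0$ in $L^2$ and hence identifies the weak limit as $0$, and then properties~1--3 yield~\eqref{eq:bes_basic_atinfty}. Your argument is instead constructive and quantitative: you recast the problem as a singular-perturbation boundary-layer problem for $R_\xi=-\nu\Delta W_\xi$, obtain the pointwise maximum-principle bound $|R_\xi|\le 1$ from subharmonicity of $|R_\xi|^2$, and combine it with the gradient bound and a cutoff to get the explicit rate $\|R_\xi\|_{L^2}^2=O((\nu/|\xi|)^{1/3})$. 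This buys you an actual decay rate (which the paper's proof does not produce) at the price of invoking the maximum principle and a cutoff/optimization step. Both arguments then finish the same way via properties~1 and~3.

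Incidentally, you correctly identify the asymptotic as $\xi a_\xi\to-\im$, i.e.\ $a_\xi\sim-\im/\xi$; the statement of the lemma contains a typographical slip (values in $\R$, limit $-1/\xi$), since $a_\xi=\nu\|\nabla W_\xi\|^2-\im\xi\|W_\xi\|^2$ is genuinely complex for $\xi\neq 0$.
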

\begin{proof}
  Fix $\xi,\xi_0\in\R$, with $\xi\neq0$, and set
  $V:=W_{\xi_0}-W_\xi$. By symmetry we can assume that $\xi>0$ and
  start with the case $\xi_0>0$.  The new function $V$ solves $\im\xi
  V - \nu\Delta V = \im(\xi - \xi_0)W_{\xi_0}$.  Multiply by
  $\Delta\overline{V}$ and integrate by parts to get
  \[
  \im\xi\|\nabla V\|_{L^2}^2 + \nu\|\Delta V\|_{L^2}^2
  = \im(\xi - \xi_0)\int_D \nabla W_{\xi_0}\cdot\nabla\overline{V}\,dx.
  \]
  The imaginary part of the above formula yields
  \[
  \xi\|\nabla V\|_{L^2}^2 = (\xi - \xi_0)\Re\Bigl(\int_D\nabla
  W_{\xi_0}\cdot\nabla\overline{V}\,dx\Bigr)
  \leq \frac{\xi}2\|\nabla V\|_{L^2}^2 + \frac{(\xi -
    \xi_0)^2}{2\xi}\|\nabla W_{\xi_0}\|_{L^2}^2
  \]
  and so $\|\nabla V\|_{L^2}\leq\tfrac{|\xi - \xi_0|}{|\xi|}\|\nabla
  W_{\xi_0}\|_{L^2}$.  On the other hand the real part yields
  \begin{multline*}
    \nu\|\Delta V\|_{L^2}^2
    =    (\xi_0 - \xi)\Im\Bigl(\int_D\nabla
    W_{\xi_0}\cdot\nabla\overline{V}\,dx\Bigr)\leq
    \\
    \leq |\xi - \xi_0|\,\|\nabla W_{\xi_0}\|_{L^2} \|\nabla V\|_{L^2}
    \leq \frac{(\xi - \xi_0)^2}{|\xi|}\|\nabla W_{\xi_0}\|_{L^2}^2
  \end{multline*}
  and as $\xi\to\xi_0$, continuity follows.
  
  In the case $\xi_0=0$ the proof is slightly different, since one can
  prove directly taking the real part that
  \begin{equation*}
    \nu\|\Delta V\|_{L^2}^2
    \leq |\xi|\|\nabla W_{\xi_0}\|_{L^2}^2.
  \end{equation*}
  Since $V\in H^1_0(D)$, this proves that as $\xi\to 0$, then R $V$
  tends to zero in $H^2(D)$, but now with the order of
  $|\xi|^\frac{1}{2}$.
  
  Next, we consider the limit at $\infty$. By the previous lemma we
  know that $\nu\|\Delta W_\xi\|_{L^2}\leq1$, so there is a
  sequence $W_{\xi_n}$ converging weakly in $H^2(D)$ to some
  $W_\infty\in H^2(D)$. Indeed $W_\infty=0$, since by the second
  property of Lemma~\ref{lem:bes_basic_props} $\xi a_\xi$ converges to
  a finite limit (and in particular implies
  \eqref{eq:bes_basic_atinfty}) and so $a_\xi\to0$, in particular
  $W_\infty=0$.  Moreover, from this it follows that the whole
  function $W_\xi\rightharpoonup 0$ weakly in $H^2(D)$.
  Finally, from the third property of
  Lemma~\ref{lem:bes_basic_props} $\|\Delta W_\xi\|_{L^2}$ converges
  strongly to $0$.

The statement on $a_\xi$ follows in the same way, by using again
Lemma~\ref{lem:bes_basic_props}.
\end{proof}
\begin{remark}
  With a little more effort one can show that $W_\xi\to
  -\tfrac\im{\xi}$ point-wise as $|\xi|\to+\infty$, but we are not
  going to use this property in the sequel.
\end{remark}
\begin{proof}[Proof of Theorem~\ref{thm:bes_basic}]
  We use the identification~\eqref{eq:bes_parseval} of an almost
  periodic function with its Fourier series and of the $\bes^2$
  semi-norm with the sum of squares of Fourier coefficients (namely,
  Parseval's identity).  Consider $f\apequiv\sum_\xi
  \hatf(\xi)\e^{\im\xi t}$, then we only need to prove suitable bounds
  for the quantities $|a_\xi|$, $\|\Delta W_\xi\|_{L^2}$, and
  $\|W_\xi\|_{L^2}$.

  Indeed, solving problem~\eqref{eq:bes_basic_fou} for the Fourier
  components yields
  \[
  w(t,x) \apequiv \sum_\xi \frac{W_\xi(x)}{a_\xi} \hatf(\xi)\e^{\im\xi t}
  \qquad\text{and}\qquad
  \pi(t) \apequiv \sum_\xi\frac{\hatf(\xi)}{a_\xi}\e^{\im\xi t}.
  \]
  In order to capture the dependence of constants from $\nu$, we
  observe that $W_\xi = \tfrac{1}{\nu}\tildeW_{\xi/\nu}$, where
  $\tildeW_\xi$ is the solution to~\eqref{eq:bes_basic_eqW}
  corresponding to $\nu=1$. Set
  \[
  n_0(\xi) = \|\tildeW_\xi\|_{L^2}^2,\qquad
  n_1(\xi) = \|\nabla\tildeW_\xi\|_{L^2}^2,\qquad
  n_2(\xi) = \|\Delta\tildeW_\xi\|_{L^2}^2.
  \]
  Indeed,
  \[
  \begin{aligned}
    \|\Delta w\|_{\bes^2(\R;L^2)}^2
    & = \sum_\xi \|\Delta\hatw\|_{L^2}^2
    =   \sum_\xi \frac{1}{|a_\xi|^2}|\hatf(\xi)|^2\|\Delta
    W_\xi\|_{L^2}^2
    \\
    & =   \sum_\xi |\hatf(\xi)|^2 \frac{\nu^2 n_2(\frac\xi\nu)}
    {\nu^2 n_1(\frac\xi\nu)^2 + \xi^2 n_0(\frac\xi\nu)^2}
    \\
    &\leq c_1\|f\|_{\bes^2(\R)}^2 + \frac{c_2}{\nu^2}\|f'\|_{\bes^2(\R)}^2,
  \end{aligned}
  \]
  since for $|\xi|\leq\nu$,
  \[
  \frac{\nu^2 n_2(\frac\xi\nu)}{\nu^2 n_1(\frac\xi\nu)^2 + \xi^2 n_0(\frac\xi\nu)^2}
  \leq \frac{n_2(\frac\xi\nu)}{n_1(\frac\xi\nu)^2}
  \leq \max_{|\sigma|\leq1} \frac{n_2(\sigma)}{n_1(\sigma)^2}
  =    c_1,
  \]
  while for $|\xi|\geq\nu$,
  \[
  \frac{\nu^2 n_2(\frac\xi\nu)}{\nu^2 n_1(\frac\xi\nu)^2 + \xi^2
    n_0(\frac\xi\nu)^2} \leq
  \frac{1}{\nu^2}\xi^2\frac{n_2(\frac\xi\nu)}{\bigl[\bigl(\frac\xi\nu\bigr)^2
    n_0(\frac\xi\nu)\bigr]^2} \leq
  \frac{1}{\nu^2}\xi^2\sup_{|\sigma|\geq1}\frac{n_2(\sigma)}{\bigl(\sigma^2n_0(\sigma)\bigr)^2}
  = \frac{c_2}{\nu^2}\xi^2.
  \]
  Similarly,
  \[
  \begin{aligned}
    \|\partial_t w\|_{\bes^2(L^2)}^2 & = \sum_\xi
    \xi^2\|\hatw(\xi)\|_{L^2}^2 = \sum_\xi
    \frac{\xi^2}{|a_\xi|^2}|\hatf(\xi)|^2\|W_\xi\|_{L^2}^2
    \\
    & = \sum_\xi|\hatf(\xi)|^2 \frac{\nu^2\xi^2 n_0(\frac\xi\nu)}
    {\nu^2 n_1(\frac\xi\nu)^2 + \xi^2 n_0(\frac\xi\nu)^2}
    \\
    &\leq c_3\nu^2\|f\|_{\bes^2}^2 + c_4\|f'\|_{\bes^2}^2,
  \end{aligned}
  \]
  since for $|\xi|\leq\nu$,
  \[
  \frac{\nu^2\xi^2 n_0(\frac\xi\nu)}{\nu^2 n_1(\frac\xi\nu)^2 + \xi^2 n_0(\frac\xi\nu)^2}
  \leq \frac{\nu^2}{n_0(\frac\xi\nu)}
  \leq \nu^2\max_{|\sigma|\leq1}\frac{1}{n_0(\sigma)}
  =    c_3\nu^2,
  \]
  while for $|\xi|\geq\nu$,
  \[
  \frac{\nu^2\xi^2 n_0(\frac\xi\nu)}{\nu^2 n_1(\frac\xi\nu)^2 + \xi^2 n_0(\frac\xi\nu)^2}
  \leq \xi^2 \frac{1}{\bigl(\frac\xi\nu\bigr)^2 n_0(\frac\xi\nu)}
  \leq \xi^2\frac1{\inf\limits_{|\sigma|\geq1}\sigma^2 n_0(\sigma)}
  =    c_4\xi^2.
  \]
  Finally, with similar computations,
  \[
  \begin{aligned}
    \|\pi\|_{\bes^2}
    & = \sum_\xi |\hatpi(\xi)|^2
    =   \sum_\xi \frac{1}{|a_\xi|^2}|\hatf(\xi)|^2
    =   \sum_\xi |\hatf(\xi)|^2 \frac{\nu^4}{\nu^2 n_1(\frac\xi\nu)^2
      + \xi^2 n_0(\frac\xi\nu)^2}
    \\
    &\leq c_5\nu^2\|f\|_{\bes^2}^2 + c_4\|f'\|_{\bes^2}^2,
  \end{aligned}
  \]
  where $c_5 = \max_{|\sigma|\leq1}\tfrac{1}{n_1(\sigma)^2}$.
  The quantities $c_1$, \dots, $c_5$ are easily seen to be finite
  by the previous lemma.
\end{proof}
\begin{remark}
  The computations in the proof of Theorem~\ref{thm:bes_basic} provide
  an alternate proof to Theorem~1 in~\cite{Bei2005c}, as well as to
  Corollary~\ref{cor:H^1} (once (generalized) Fourier transform are
  replaced by Fourier series). Moreover, the following estimates hold,
  \[
  \begin{aligned}
    &\int_\R \|\Delta w\|_{L^2}^2\,dt \leq c\|f\|_{L^2(\R)}^2 +
    \frac{c}{\nu^2}\|f'\|_{L^2(\R)}^2,
    \\
    &\int_\R \|\partial_t w\|_{L^2}^2\,dt + \int_\R|\pi(t)|^2\,dt \leq
    c\nu^2\|f\|_{L^2(\R)}^2 + c\|f'\|_{L^2(\R)}^2,
    \\
    &\sup_{t\in\R}\|\nabla w(t)\|_{L^2}^2 \leq
    c\bigl(\nu\|f\|_{L^2(\R)}^2 +
    \frac{1}{\nu}\|f'\|_{L^2(\R)}^2\bigr).
  \end{aligned}
  \]
  Indeed, in the proof above we have shown that
  \begin{equation}
    \label{eq:bes_basic_stime}
    \begin{aligned}
      &\frac{\|\Delta W_\xi\|_{L^2}}{|a_\xi|}\leq
      c\max\big\{1,\nu^{-1}|\xi|\big\}
    \\
    &\frac{|\xi|\,\|W_\xi\|_{L^2}}{|a_\xi|}\leq
    c\max\big\{|\xi|,\nu\big\}
    \\
    &\frac{1}{|a_\xi|}\leq
    c\max\big\{|\xi|,\nu\big\}.
  \end{aligned}
\end{equation} 
Hence, by Parseval's identity, we get
\[
\int_\R\|\Delta w\|_{L^2}^2
=    \int_\R \frac1{|a_\xi|^2}|\hatf(\xi)|^2\|\Delta W_\xi\|_{L^2}^2
\leq c\|f\|_{L^2}^2 + \frac{c}{\nu^2}\|f'\|_{L^2}^2,
\]
and the other inequalities are obtained similarly. Finally, the
inequality for $\nabla w$ follows by integration by parts and the
identity $\frac{d}{dt}\|\nabla w\|^2_{L^2}=-2\int_D w_t\,\Delta w\,dx$.
\end{remark}
%
%
\subsection{On the meaning of the solution}
\label{ss:meaning}
We need to spend a few words about the notion of solution we
constructed. We observe that a given $f\in \bes^{1,2}(\R)$ is clearly
identified in the sense of $\bes^{1,2}(\R)$, hence by means of its
generalized Fourier series. This implies, for instance that if $w$ is
a solution in the sense of Besicovitch spaces, then $w+\overline{w}$
is also a solution, for any $\overline{w}\in L^2(\R;H^2)\cap
H^1(\R;L^2)$. This poses some restrictions to the interpretation of
the result. One would like to have some embedding in the space of
continuous functions in order to have a more precise identification of
the solution. A larger spaces in which we are able to solve the
equation is balanced by a weaker notion of solution.

In general one cannot expect the validity of the usual Sobolev
embeddings in $\bes^{s,p}(\R)$ as is explained for instance in
Pankov~\cite{Pan1990} and especially the identification with
$\uap(\R)$ functions is not a trivial fact. Classical counterexamples
can be found in the references cited, while the following general
embedding result is proved for instance in~\cite{IBDS1998}.
\begin{proposition}
  \label{prop:Sobolev-embedding}
  Let $\Xi\subset\R$ be countable and assume there is $\beta>0$ such
  that the generalized sum satisfies
  \[
  \sum_{\xi\in\Xi}\frac{1}{|\xi|^\gamma}\qquad
  \begin{cases}
    <+\infty\quad\text{for }\gamma>\beta
    \\
    =+\infty\quad\text{for }\gamma<\beta.
  \end{cases}
  \]
  If $\beta<2s$, then for every $f\in\bes^{s,2}(\R)$ such that
  $\sigma(f)\subset\Xi$, we have $f\in C^{r,\alpha}(R)\cap \uap(\R)$ for
  all $\alpha\in[0,s-r-\beta/2)$, where $r = \lceil
  s-\tfrac\beta2\rceil$ (with corresponding inequality for the norms).
\end{proposition}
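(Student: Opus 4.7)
The plan is to represent $f$ through its generalised Fourier series $f\apequiv\sum_{\xi\in\sigma(f)}a_\xi\,\e^{\im\xi t}$ with weighted $\ell^2$ coefficients $\sum_\xi(1+|\xi|^2)^s|a_\xi|^2<+\infty$, and then to turn the sparsity hypothesis on $\Xi$ into absolute uniform convergence of the series and of its formal derivatives through a single Cauchy--Schwarz estimate. As a preliminary observation, the assumption $\sum_{\xi\in\Xi}|\xi|^{-\gamma}<+\infty$ for every $\gamma>\beta$ forces $\Xi\cap[-1,1]$ to be finite (otherwise the series would diverge for every $\gamma>0$), so the low-frequency portion of the expansion is a genuine trigonometric polynomial and plays no role in the convergence analysis; only the tail $|\xi|\geq 1$ has to be controlled.

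For continuity and almost periodicity I would apply Cauchy--Schwarz in the form
\[
  \sum_{|\xi|\geq 1}|a_\xi|
    \leq \|f\|_{\bes^{s,2}}\Big(\sum_{\xi\in\Xi,|\xi|\geq 1}|\xi|^{-2s}\Big)^{1/2},
\]
which is finite precisely because $2s>\beta$. Absolute uniform convergence on $\R$ gives a continuous limit that is the uniform limit of trigonometric polynomials, hence belongs to $\uap(\R)$ by the characterisation in Section~\ref{sec:generalities_ap}; a standard uniqueness argument for the generalised Fourier coefficients identifies this classical sum with the canonical representative of $f$ modulo $\apequiv$. For the higher regularity I would differentiate term by term to obtain the candidate
\[
  f^{(r)}(t)\apequiv\sum_{\xi}(\im\xi)^r a_\xi\,\e^{\im\xi t};
\]
the same Cauchy--Schwarz argument, now with weight $|\xi|^{2r}(1+|\xi|^2)^{-s}$, shows this series is uniformly convergent since the choice $r=\lceil s-\beta/2\rceil$ yields $2(s-r)>\beta$, so in particular $f\in C^r(\R)\cap\uap(\R)$. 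Finally, for the H\"older seminorm I would use the elementary estimate $|\e^{\im\xi h}-1|\leq 2^{1-\alpha}|\xi h|^\alpha$, $\alpha\in[0,1]$, together with Cauchy--Schwarz to get
\[
  |f^{(r)}(t+h)-f^{(r)}(t)|
    \leq 2^{1-\alpha}|h|^\alpha\,\|f\|_{\bes^{s,2}}
      \Big(\sum_{\xi\in\Xi}\frac{|\xi|^{2(r+\alpha)}}{(1+|\xi|^2)^s}\Big)^{1/2},
\]
and the last factor is finite exactly when $2(s-r-\alpha)>\beta$, i.e.\ on the stated range $\alpha\in[0,s-r-\beta/2)$. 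The corresponding norm inequalities follow directly from these estimates, with implicit constants depending only on $s$, $r$, $\alpha$ and on the sparsity constants of $\Xi$.

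The main obstacle I foresee is bookkeeping between the Besicovitch equivalence $\apequiv$ (under which $f$ is only defined up to a null class in $\bes^2$) and genuine pointwise identity: the conclusions $f\in\uap(\R)$ and $f\in C^{r,\alpha}(\R)$ become meaningful only after identifying $f$ with the classical sum of its generalised Fourier series, which in turn requires that two $\bes^2$-equivalent elements of $\uap(\R)$ actually coincide. Once this identification is pinned down, the rest is the routine Cauchy--Schwarz-plus-sparsity computation sketched above.
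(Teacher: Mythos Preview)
The paper does not actually prove this proposition: it is quoted from~\cite{IBDS1998}, and immediately afterwards the authors only sketch the key mechanism, namely the Cauchy--Schwarz inequality
\[
  \Bigl|\sum_{\xi\in\sigma(f)} c_\xi \e^{\im \xi t}\Bigr|^2
  \leq \Bigl(\sum_{\xi\in\sigma(f)} |\xi|^{2s} |c_\xi|^2\Bigr)
       \Bigl(\sum_{\xi\in\sigma(f)} \frac{1}{|\xi|^{2s}}\Bigr),
\]
together with the remark that $\ell^1$ Fourier coefficients give a uniformly convergent series in $\uap(\R)$. Your proposal is precisely a fleshed-out version of this sketch, carried through to the $r$-th derivative and to the H\"older increment via $|\e^{\im\xi h}-1|\leq 2^{1-\alpha}|\xi h|^\alpha$; in that sense there is nothing to compare---you are doing exactly what the paper hints at.

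There is, however, one concrete slip you should fix. You write that ``the choice $r=\lceil s-\beta/2\rceil$ yields $2(s-r)>\beta$'', but the ceiling gives $r\geq s-\beta/2$, hence $2(s-r)\leq\beta$, and likewise the interval $[0,s-r-\beta/2)$ for $\alpha$ would be empty. The statement as printed in the paper almost certainly intends the floor $r=\lfloor s-\beta/2\rfloor$ (so that $0\leq s-r-\beta/2<1$), and with that reading all three of your Cauchy--Schwarz estimates go through as written: $\sum_{\xi}|\xi|^{-2(s-r)}$ and $\sum_\xi|\xi|^{-2(s-r-\alpha)}$ are finite exactly when $2(s-r)>\beta$ and $2(s-r-\alpha)>\beta$, which is the claimed range. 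You should flag this as a typo rather than silently assert the false inequality. The point you raise in your final paragraph about identifying the $\bes^2$-class with its $\uap$ representative is genuine but standard: two $\uap$ functions with identical generalized Fourier coefficients coincide, so once the series converges uniformly the identification is unambiguous.
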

\begin{remark}
  To simplify the notation from now on we denote by $f_\xi$ the
  (generalized) Fourier coefficient $\hatf(\xi)$ that is (more
  precisely) written as $a_\xi(f)$ in~\eqref{eq:generalized_Fourier}.
\end{remark}
 To understand this result, let us observe that if $c_\xi\in
\ell^1(\C)$, then the series $\sum_\xi c_\xi \e^{\im \xi t}$ converges
uniformly and can be identified with a continuous almost periodic
function $f\apequiv\sum_\xi c_\xi \e^{\im \xi t}$.

Moreover, for classical Fourier series, i.~e.~$\sigma(f)=\Xi\subseteq
\Z$, the $\beta$-condition is satisfied for $\beta=1$ and this shows
that if $(c_j)_{j\in\Z}$, $(j c_j)_{j\in\Z}\in \ell^2(\C)$, then
\[
\Bigl|\sum_{j\in\Z} c_j \e^{\im j t}\Bigr|^2 \leq \Bigl|\sum_{j\in\Z}
|c_j|\Bigr|^2 \leq \Bigl(\sum_{j\in\Z} j^2 |c_j|^2\Bigr)
\Bigl(\sum_{j\in\Z\setminus{0}}\frac{1}{j^2}\Bigr) < +\infty.
\]
This is the guideline to understand the result for $\bes^{s,2}(\R)$, since one
has -- roughly speaking -- to show an inequality similar to
\[
\Bigl|\sum_{\xi\in\sigma(f)} c_\xi \e^{\im \xi t}\Bigr|^2
\leq \Bigl|\sum_{\xi\in\sigma(f)} |c_\xi|\Bigr|^2
\leq \Bigl(\sum_{\xi\in\sigma(f)} |\xi|^{2s} |c_\xi|^2\Bigr) 
\Bigl(\sum_{\xi\in\sigma(f)} \frac{1}{|\xi|^{2s}}\Bigr)
<    +\infty.
\]
For instance Proposition~\ref{prop:Sobolev-embedding} implies the
following result.
\begin{corollary}
  Let be given $f\in\bes^{1,2}(\R)$, with $f\apequiv\sum_\xi
  f_\xi\e^{\im\xi t}$ such that
  \[
  \sum_{\xi\in\sigma(f)} \frac{1}{|\xi|^2} < +\infty.
  \]
  Then, there are $w\in\bes^2(\R;H^2(D))\cap \uap(\R;H^1_0(D))$, with
  $\partial_t w\in\bes^2(\R;L^2(D))$, and $\pi\in\bes^2(\R)$ such
  that~\eqref{eq:pipe-global} is satisfied in the sense of
  Besicovitch.
\end{corollary}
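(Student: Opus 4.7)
The plan is to upgrade the Besicovitch solution from Theorem~\ref{thm:bes_basic} to the stronger $\uap(\R;H^1_0(D))$ regularity by combining that theorem with the Sobolev-type embedding of Proposition~\ref{prop:Sobolev-embedding}. First I would invoke Theorem~\ref{thm:bes_basic} to obtain the pair $(w,\pi)$ with the stated Besicovitch regularity, for which the explicit Fourier coefficients read $\hatw_\xi=f_\xi W_\xi/a_\xi$ and $\hatpi_\xi=f_\xi/a_\xi$; in particular $\sigma(w)\cup\sigma(\pi)\subseteq\sigma(f)$, so that the spectral hypothesis $\sum_{\xi\in\sigma(f)}|\xi|^{-2}<\infty$ also applies to~$w$ and~$\pi$. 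Note that this assumption forces $\sigma(f)$ to have no accumulation point other than $\infty$, so spectral series of the form $\sum_\xi(1+|\xi|^2)^{-s}$ for $s\ge1$ are finite.

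Next I would combine the sharp bounds of Lemma~\ref{lem:bes_basic_props} and~\eqref{eq:bes_basic_stime} (namely $\|\nabla W_\xi\|_{L^2}^2\le|a_\xi|/\nu$, $\|W_\xi\|_{L^2}^2\le 1/\xi^2$ and $|a_\xi|^{-1}\lesssim\max(|\xi|,\nu)$) with the spectral condition in order to verify that $w\in\bes^{s,2}(\R;H^1_0(D))$ for a suitable exponent $s$ satisfying the threshold $\beta<2s$ of Proposition~\ref{prop:Sobolev-embedding}. Here $\beta$ denotes the critical exponent associated with $\sigma(f)$: the assumption yields $\beta\le 2$, and in the generic situation $\beta<2$, so an index $s$ marginally larger than~$\beta/2$ is admissible. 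Once the Sobolev-Besicovitch bound for $\nabla w$ is in hand, one applies the Banach-valued analogue of Proposition~\ref{prop:Sobolev-embedding} (whose proof is just a Cauchy-Schwarz) to deduce absolute summability of the $H^1_0(D)$-valued Fourier series of~$w$.

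Concretely, this reduces to checking the inequality
\[
\sum_\xi \|\nabla\hatw_\xi\|_{L^2}
  \le \Bigl(\sum_\xi(1+|\xi|^2)^s \|\nabla\hatw_\xi\|_{L^2}^2\Bigr)^{1/2}
  \Bigl(\sum_\xi(1+|\xi|^2)^{-s}\Bigr)^{1/2},
\]
where the second factor is finite thanks to $\sum_\xi|\xi|^{-2}<\infty$, and the first is the Sobolev-Besicovitch norm estimated through the explicit coefficients and the bounds above. Once $\sum_\xi\|\nabla\hatw_\xi\|_{L^2}<\infty$, the series $\sum_\xi\hatw_\xi(x)\e^{\im\xi t}$ converges in $H^1_0(D)$ uniformly in $t\in\R$; its sum is then a continuous almost periodic function in $\uap(\R;H^1_0(D))$, and uniqueness in Theorem~\ref{thm:bes_basic} identifies this representative with~$w$. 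The remaining statements ($w\in\bes^2(\R;H^2(D))$, $\partial_t w\in\bes^2(\R;L^2(D))$, $\pi\in\bes^2(\R)$, and the equation itself) are a direct quote of Theorem~\ref{thm:bes_basic}.

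The delicate point is tuning $s$ in the previous step: the factor $|a_\xi|^{-1}\sim|\xi|$ at high frequencies makes $\nabla w$ one half-derivative worse than $f$ in the Besicovitch scale, so the natural Sobolev-Besicovitch regularity of $\nabla w$ is strictly below that of $f$. The hypothesis $\sum|\xi|^{-2}<\infty$ is exactly what compensates for this loss in the Cauchy-Schwarz above; extracting the admissible exponent from the interplay of $f\in\bes^{1,2}(\R)$, the explicit coefficient bounds, and the summability on the spectrum is the only substantial work in the proof.
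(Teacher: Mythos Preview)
Your approach is precisely the one the paper intends: the corollary is stated immediately after Proposition~\ref{prop:Sobolev-embedding} with the remark that the proposition implies it, so the argument is Theorem~\ref{thm:bes_basic} for existence plus the Sobolev--Besicovitch embedding for the upgrade to $\uap(\R;H^1_0(D))$. Your identification of the key mechanism --- the Cauchy--Schwarz splitting to obtain $\sum_\xi\|\nabla\hatw_\xi\|_{L^2}<\infty$ --- is exactly what underlies the embedding, and the paper offers no further detail.

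That said, the ``delicate point'' you flag is more than delicate: as written, the exponent bookkeeping does not close. From the bounds you quote (together with $\|\nabla W_\xi\|_{L^2}^2\le|a_\xi|/\nu$ and $|a_\xi|^{-1}\lesssim|\xi|$ for large $|\xi|$) one gets $\|\nabla\hatw_\xi\|_{L^2}^2\lesssim |f_\xi|^2|\xi|$, so $\nabla w$ lies in $\bes^{1/2,2}(\R;L^2)$ but no better is available from $f\in\bes^{1,2}$ alone. Applying Proposition~\ref{prop:Sobolev-embedding} with $s=\tfrac12$ then requires $\beta<1$, whereas the hypothesis $\sum_\xi|\xi|^{-2}<\infty$ only yields $\beta\le2$. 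Your hedge ``in the generic situation $\beta<2$'' is both unjustified (the corollary is stated for \emph{all} admissible $f$) and insufficient (even $\beta<2$ would not give $\beta<1$). If instead you split with $s=1$ so that the spectral factor $\sum_\xi(1+|\xi|^2)^{-1}$ is finite by hypothesis, the other factor becomes $\sum_\xi|\xi|^2\|\nabla\hatw_\xi\|_{L^2}^2\lesssim\sum_\xi|\xi|^3|f_\xi|^2$, which is not controlled by $f\in\bes^{1,2}(\R)$. No intermediate choice of $s$ helps, since one needs simultaneously $2s\le 1$ (for the first factor) and $2s>\beta$ (for the second), which is impossible once $\beta\ge1$.

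So the sketch --- yours and, implicitly, the paper's --- has a genuine gap at precisely the step you call ``the only substantial work''. Either an additional estimate on $\|\nabla W_\xi\|/|a_\xi|$ sharper than those recorded is needed, or the spectral hypothesis should be read as imposing a stronger constraint (effectively $\beta<1$) than the bare condition $\sum_\xi|\xi|^{-2}<\infty$.
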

This makes also possible to consider the flux as
\[
f(t) = f_1(t) + f_2(t),
\]
with $f_1\in\bes^{1,2}(\R)$ and $f_2\in H^1(\R)$, so that
$\|f_2\|_{\bes^{1,2}}=0$. One can construct the solutions $w_1\in
\bes^{1,2}(\R;L^2(D))$ and $w_2\in W^{1,2}(\R;L^2(D))$ corresponding to $f_1$
and $f_2$ respectively, and add together.

This is not completely satisfactory, since we still do not have a
precise identification on the pressure. To this end one would like to
have a solution in the classical $\uap(\R)$ space for example. This can be
achieved by assuming stronger conditions on $f$ (rather than on its
spectrum), as shown by the following result.
\begin{proposition}
  \label{prop:regL1}
  Let $f$ be given, with $f\apequiv\sum_\xi f_\xi\e^{\im\xi t}$, such
  that
  \begin{equation*}
    \sum_\xi (1 + |\xi|) |f_\xi| < +\infty.
  \end{equation*}
  Then, there exists a unique solution $(w,\pi)$ to~\eqref{eq:pipe-global} such
  that
  \begin{equation}
    \label{eq:bes_basic_stimeL1}
    \begin{gathered}
      \sum_\xi \|\Delta w_{\xi}\|_{L^2}
      \leq c\sum_\xi |f_\xi| + \frac{c}{\nu}\sum_\xi |\xi|\,|f_\xi|,
      \\
      \sum_\xi |\pi_{\xi}| + \sum_\xi |\xi|\,\|w_{\xi}\|_{L^2}
      \leq c\nu\sum_\xi |f_\xi| + c\sum_\xi |\xi|\,|f_\xi|,  
    \end{gathered}
  \end{equation}
  where $w_{\xi}$ and $\pi_{\xi}$ are the (generalized) Fourier
  coefficients of $w$ and $\pi$, respectively. In particular, $w\in
  \uap(\R;H^2)$, $\partial_t w\in \uap(\R;H)$ and $\pi\in \uap(\R)$.
\end{proposition}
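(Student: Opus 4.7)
The plan is to re-use the explicit Fourier representation derived in the proof of Theorem~\ref{thm:bes_basic}, that is
\[
  w(t,x)\apequiv\sum_{\xi}\frac{W_\xi(x)}{a_\xi}f_\xi\,\e^{\im\xi t},
  \qquad
  \pi(t)\apequiv\sum_\xi\frac{f_\xi}{a_\xi}\e^{\im\xi t},
\]
together with the pointwise multiplier bounds~\eqref{eq:bes_basic_stime}, and to read them term-by-term rather than square-summing them via Parseval. The idea is that the stronger $\ell^1$-type hypothesis on $f$ passes directly through the multipliers $|a_\xi|^{-1}$, $\|\Delta W_\xi\|_{L^2}/|a_\xi|$, and $|\xi|\,\|W_\xi\|_{L^2}/|a_\xi|$ already studied in Section~\ref{s:bes_basic}.

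Concretely, the first step is to multiply each inequality in~\eqref{eq:bes_basic_stime} by $|f_\xi|$. Since $\max\{1,|\xi|/\nu\}\leq 1+|\xi|/\nu$ and $\max\{|\xi|,\nu\}\leq |\xi|+\nu$, summing over $\xi\in\sigma(f)$ yields precisely the three inequalities in~\eqref{eq:bes_basic_stimeL1}. In particular, combining with the elliptic regularity bound $\|w_\xi\|_{H^2(D)}\leq c\,\|\Delta w_\xi\|_{L^2(D)}$ on $H^2(D)\cap H^1_0(D)$, the first estimate gives $\sum_\xi\|w_\xi\|_{H^2(D)}<+\infty$, while the second yields both $\sum_\xi|\pi_\xi|<+\infty$ and $\sum_\xi\|\partial_t w_\xi\|_{L^2(D)}=\sum_\xi|\xi|\,\|w_\xi\|_{L^2(D)}<+\infty$.

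The second step is to promote the identification from $\bes^2$ to $\uap$. Define the trigonometric partial sums
\[
  w_N(t,x):=\sum_{|\xi|\leq N}\frac{W_\xi(x)}{a_\xi}f_\xi\,\e^{\im\xi t},
  \qquad
  \pi_N(t):=\sum_{|\xi|\leq N}\frac{f_\xi}{a_\xi}\e^{\im\xi t};
\]
these are, respectively, $H^2(D)$-valued and $\R$-valued trigonometric polynomials and thus belong to the corresponding $\uap$ spaces. The absolute summability obtained at the previous step shows that $(w_N)$ is Cauchy in $C^0_b(\R;H^2(D))$, $(\partial_t w_N)$ is Cauchy in $C^0_b(\R;L^2(D))$, and $(\pi_N)$ is Cauchy in $C^0_b(\R)$. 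Since each $\uap$-space is the uniform closure of the corresponding trigonometric polynomials, the uniform limits lie in $\uap(\R;H^2(D))$, $\uap(\R;L^2(D))$, and $\uap(\R)$ respectively. By construction these limits agree, modulo $\apequiv$, with the Besicovitch solution provided by Theorem~\ref{thm:bes_basic}, and they are the sought canonical continuous representatives.

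Uniqueness in $\uap$ is inherited from the Besicovitch uniqueness already established in Theorem~\ref{thm:bes_basic}: two continuous almost periodic solutions which coincide modulo $\apequiv$ must coincide pointwise by density of the trigonometric polynomials. I do not foresee any substantial obstacle: the delicate analytic content (continuity of $\xi\mapsto W_\xi$ and the decay of $a_\xi$ at infinity, encoded in~\eqref{eq:bes_basic_stime}) has been fully developed in Section~\ref{s:bes_basic}, so what remains is essentially the bookkeeping verification that the $\ell^1$-type pairing between the hypothesis on $f$ and those multipliers produces absolutely convergent trigonometric series.
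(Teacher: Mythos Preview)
Your proposal is correct and follows essentially the same route as the paper: both derive the $\ell^1$-type bounds~\eqref{eq:bes_basic_stimeL1} directly from the multiplier estimates~\eqref{eq:bes_basic_stime}, then pass to $\uap$ by approximating with trigonometric polynomials and using absolute summability to get uniform convergence. One small imprecision: your truncation $\{|\xi|\leq N\}\cap\sigma(f)$ need not be finite (the spectrum may accumulate), so $w_N$ is not literally a trigonometric polynomial; the paper instead takes an increasing exhaustion of $\sigma(f)$ by \emph{finite} subsets $\sigma_n(f)$, which fixes this at no cost.
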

\begin{proof}
  First we notice that since $\sum_\xi |f_\xi|<+\infty$, then
  \[
  \sum_\xi |f_\xi|^2 \leq \Bigl(\sum_\xi |f_\xi|\Bigr)^2,
  \]
  showing that $f\in \bes^2(\R)$. The same argument shows also that
  $f'\in\bes^2(\R)$, and so Theorem~\ref{thm:bes_basic} ensures the existence
  of a unique solution. Since $w_{\xi} = a_\xi^{-1}{W_\xi} f_\xi$ and
  $\pi_{\xi}=a_\xi^{-1}f_\xi$, the estimates~\eqref{eq:bes_basic_stimeL1} follow
  immediately from~\eqref{eq:bes_basic_stime}.
  In order to show that $w$ and $\pi$ are Bohr-almost periodic, we consider
  a truncation 
  \[
  f_n = \sum_{\xi\in \sigma_n(f)} f_\xi\e^{\im\xi t},
  \]
  where $(\sigma_n(f))_{n\in\N}$ is an increasing sequence of finite
  subset of $\sigma(f)$ such that $\bigcup_n \sigma_n(f) = \sigma(f)$.
  For each $n\in\N$ we can consider~\eqref{eq:pipe-global} with flux given by the
  trigonometric polynomial $f_n$ and the
  estimates~\eqref{eq:bes_basic_stimeL1} imply uniform convergence of the
  corresponding solutions $(w_n,\pi_n)$ towards $\uap$ functions with
  the requested properties.
\end{proof}
%
\subsection{The nonlinear case}
\label{ss:bes_nonlin}
In this last section we consider the non-linear problem. Assume
preliminarily (we shall assume stronger assumptions on $f$ later) that
$f\in\bes^{1,2}(\R)$ and denote by $w_1$, $w_2$ the basic flows in the
two pipes $O_1$, $O_2$ respectively, provided by
Theorem~\ref{thm:bes_basic}. Let $V_0$ be the flow defined as
in~\eqref{eq:V0_def}, it is clear that $V_0$ is also almost periodic
and keeps the same regularity properties of $w_1$ and $w_2$, namely
\[
V_0\in\bes^2(\R;H^2(O)\cap H_0^1(O))\cap\bes^{1,2}(\R;L^2(O)),
\]
as well as the flow $w$ defined in~\eqref{eq:v0_def}. Indeed, both
flows are obtained by applying only linear operators in the space
variable to $w_1$ and $w_2$.

Consider the full nonlinear Leray's problem in the (Besicovitch)
almost periodic setting, namely to find a solution $(u, p)$ to the
problem
\begin{equation}
  \label{eq:bes_nl}
  \begin{cases}
    \partial_t u - \nu\Delta u + (u\cdot\nabla)\,u + \nabla p \apequiv
    0,
    \\
    \nabla\cdot u\apequiv 0,
  \end{cases}
\end{equation}
such that
\[
  \|u - w_i\|_{\bes^2(\R;H^1(O_i))}\leq c_0,
  \qquad i=1,2,
\] 
and this implies that
\begin{equation}
  \label{eq:bes_nl_close}
  \lim_{z\to+\infty}\|u - w_i\|_{\bes^2(\R;H^{1/2}(D_i))}=0.
\end{equation}

If $w$ is the flow defined in~\eqref{eq:v0_def}, consider the solution
$u = U + w$ as a perturbation of $w$. Consequently,
\begin{equation}
  \label{eq:bes_nl2}
  \begin{cases}
    \partial_t U - \nu\Delta U + (U\cdot\nabla)\,U + (U\cdot\nabla)\,w +
    (w\cdot\nabla)\,U + \nabla P \apequiv F,
    \\
    \nabla\cdot U\apequiv 0,
  \end{cases}
\end{equation}
where $F$ is defined in~\eqref{eq:F_def}. The main theorem of the
section is the following.
\begin{theorem}
  \label{thm:bes_nl}
  Assume that the flux $f$ satisfies
  \begin{equation}
    \label{eq:bes_phi1}
    \phi_*:=  \sum_\xi ( 1 + |\xi|) |f_\xi| < \infty.
  \end{equation}
  Then, there exists $\nu_0 > 0$, with $\nu_0 = \nu_0(f, O)$, such
  that for every $\nu\geq\nu_0$ problem~\eqref{eq:bes_nl} admits a solution
  \[
    u\in\bes^2(\R;H^1_0(O))\cap\bes^{\frac12,2}(\R;L^2(O)),
  \]
  which satisfies~\eqref{eq:bes_nl_close}.
\end{theorem}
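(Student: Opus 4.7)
The plan is to exploit~\eqref{eq:bes_phi1} via Proposition~\ref{prop:regL1} to place all the data in Bohr classes with absolutely summable generalized Fourier coefficients, then solve~\eqref{eq:bes_nl2} by a Banach fixed point in a space of such summable coefficients, where the quadratic nonlinearity is tamed by a Young-type convolution estimate on $\ell^1$.

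First I would check that under~\eqref{eq:bes_phi1}, Proposition~\ref{prop:regL1} applied in each pipe plus the bilinearity of the gluing in~\eqref{eq:V0_def}--\eqref{eq:v0_def} yields $w\in\uap(\R;H^2(O)\cap H^1_0(O))$ with $\sum_\xi\bigl(\nu\|w_\xi\|_{H^2(O)}+|\xi|\,\|w_\xi\|_{L^2(O)}\bigr)\leq c\,\phi_*$. Then the forcing $F$ from~\eqref{eq:F_def} has Fourier coefficients $F_\xi\in L^2(O)$, supported in $O^1$, with $\sum_\xi\|F_\xi\|_{L^2(O)}<\infty$; the only nontrivial term $(w\cdot\nabla)w$ is handled by the bilinear convolution bound
\[
\sum_\xi\bigl\|\bigl[(w\cdot\nabla)w\bigr]_\xi\bigr\|_{L^2(O)}\leq c\Bigl(\sum_\xi\|w_\xi\|_{H^2(O)}\Bigr)\Bigl(\sum_\xi\|\nabla w_\xi\|_{L^2(O)}\Bigr),
\]
where the uniform Sobolev embedding $H^2\hookrightarrow L^\infty$ on unit strips of $O$ (cf.~\cite[Lemma~2.1]{Gal1994b}) is used modewise.

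Next, writing $U\apequiv\sum_\xi U_\xi(x)\,\e^{\im\xi t}$ and $P\apequiv\sum_\xi P_\xi(x)\,\e^{\im\xi t}$, equation~\eqref{eq:bes_nl2} becomes the family of stationary complex Stokes problems
\[
\begin{cases}
\im\xi\,U_\xi-\nu\Delta U_\xi+\nabla P_\xi=F_\xi-N_\xi(U),&x\in O,\\
\nabla\cdot U_\xi=0,&x\in O,\\
U_\xi=0,&x\in\partial O,
\end{cases}
\]
with $N_\xi(U):=\sum_{\xi_1+\xi_2=\xi}\bigl[(U_{\xi_1}\cdot\nabla)U_{\xi_2}+(U_{\xi_1}\cdot\nabla)w_{\xi_2}+(w_{\xi_1}\cdot\nabla)U_{\xi_2}\bigr]$. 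Pairing the $\xi$-equation with $\overline{U_\xi}$ and $-\Delta\overline{U_\xi}$ in $L^2(O)$ and combining with standard Stokes regularity (again strip-wise to obtain uniform constants on the unbounded domain $O$) yields the modewise bound
\[
\nu\|U_\xi\|_{H^2(O)}+|\xi|\,\|U_\xi\|_{L^2(O)}\leq c\,\|F_\xi-N_\xi(U)\|_{L^2(O)}.
\]
I would then work in the Banach space
\[
X:=\Bigl\{U:\ \nabla\cdot U\apequiv 0,\ U|_{\partial O}\apequiv 0,\ \|U\|_X:=\sum_\xi\bigl(\nu\|U_\xi\|_{H^2(O)}+|\xi|\,\|U_\xi\|_{L^2(O)}\bigr)<\infty\Bigr\},
\]
and define $\Phi:X\to X$ by letting $\Phi(U)$ solve the linearised Stokes family above with right-hand side $F-N(U)$.

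The convolution estimate for $N$, combined with the strip-wise embedding $H^2\hookrightarrow L^\infty$, gives $\sum_\xi\|N_\xi(U)\|_{L^2(O)}\leq\tfrac{c}{\nu}\|U\|_X^2+\tfrac{c}{\nu}\phi_*\|U\|_X$, and hence $\|\Phi(U)\|_X\leq c\phi_*+\tfrac{c}{\nu}\bigl(\|U\|_X^2+\phi_*\|U\|_X\bigr)$; an analogous bilinear estimate controls $\Phi(U_1)-\Phi(U_2)$. Choosing $R:=2c\phi_*$ and $\nu\geq\nu_0(f,O)$ large enough that $cR/\nu\leq 1/4$ and $c\phi_*/\nu\leq 1/4$, the map $\Phi$ sends the ball $B_R\subset X$ into itself and is a strict contraction, so its unique fixed point $U\in X$ embeds into $\bes^2(\R;H^1_0(O))\cap\bes^{1/2,2}(\R;L^2(O))$ and $u:=U+w$ is the sought solution; the asymptotic condition~\eqref{eq:bes_nl_close} follows from the $O^1$-support of the source terms in $F$ together with the decay of the source-free Stokes flow in the semi-infinite cylinders, as in Theorem~\ref{thm:Leray_Stepanov}. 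The main obstacle is precisely the quadratic mode-coupling generated by $(U\cdot\nabla)U$: it is this coupling that forces the stronger $\ell^1$-type hypothesis~\eqref{eq:bes_phi1} rather than the weaker $\bes^{1,2}$ hypothesis sufficing for Theorem~\ref{thm:bes_basic}, since in the generalised-Fourier framework the convolution sums defining $N_\xi$ cannot be given a pointwise meaning, let alone bounded, from square-summability alone.
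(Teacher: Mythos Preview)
Your argument is correct, but the route differs from the paper's in two substantive ways. First, the paper works at the $H^1_0$ level: it linearises by introducing an auxiliary field $\tildeU$, solves the linearised problem first for finitely many modes via a Galerkin--Brouwer argument (Proposition~\ref{prop:bes_aux}), passes to the limit $N\to\infty$ (Proposition~\ref{prop:bes_aux2}), and only then runs a contraction on the map $\tildeU\mapsto U$. The key technical device is the cancellation identity of Lemma~\ref{lem:bes_nl_tool2}, which kills the dangerous cubic term $\sum_\xi\sum_{\eta+\theta=\xi}\int\overline{U_\xi}\cdot(\tildeU_\eta\cdot\nabla)U_\theta$ in the energy estimate and allows the whole proof to stay at $H^1$ regularity. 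You instead work directly at the $H^2$ level via the modewise Stokes resolvent bound, which lets you treat $(U\cdot\nabla)U$ as a small quadratic perturbation through the strip-wise embedding $H^2\hookrightarrow L^\infty$ and Young's $\ell^1$ convolution inequality---no cancellation is needed. Second, your fixed point is set up on the fully nonlinear map $\Phi$ in one shot, bypassing the finite-mode truncation entirely. What your approach buys is a shorter, more transparent argument and a stronger solution ($U\in\bes^\star(\R;H^2)$, which certainly embeds into the claimed space); what the paper's approach buys is existence of the finite-mode approximants without any size restriction on $\nu$ (smallness enters only for uniqueness and the contraction), and a proof that stays closer to the natural energy space of weak Navier--Stokes theory. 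Both approaches rely equally on the $\ell^1$-type hypothesis~\eqref{eq:bes_phi1}, for exactly the reason you identify in your final remark.
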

The rest of the section is devoted to the proof of this result.
\subsubsection{Spectrum and module}
Before turning to the analysis of problem~\eqref{eq:bes_nl2}, we recall
that, since $f\in\bes^2(\R)$, its generalized Fourier series is
well-defined and its spectrum $\sigma(f)$ is the set of modes
$\xi\in\R$ corresponding to non-zero coefficients $f_\xi$ in the
Fourier expansion of $f$.

Since $f$ is real, it follows that $\overline{f_\xi}=f_{-\xi}$ and so
the spectrum is symmetric, namely $-\sigma(f) = \sigma(f)$.
\begin{definition}
  The set $\mu(f)$ is the $\Z$-\emph{module} of the spectrum of $f$,
  namely the smallest subset of $\R$ which contains $\sigma(f)$ and is
  closed for the sum (that is, if $\xi,\eta\in\mu(f)$, then $a\,
  \xi+b\,\eta\in\mu(f)$, for all $a,b\in\Z$).
\end{definition}
It is clear that $\mu(f)$ is also symmetric and, since $\sigma(f)$ is
at most countable, $\mu(f)$ is at most countable too.  Moreover, it is
easy to see that the spectra of $w$ and $F$, by linearity, are
contained in the spectrum of $f$. Indeed, by construction, the terms
$V_0$ and $w$, defined in~\eqref{eq:V0_def} and in~\eqref{eq:v0_def},
respectively have spectrum contained in $\sigma(f)$.

In the following, with the purpose of approximations, we shall need to
consider finite dimensional truncations. To this aim, we fix an increasing
sequence $(\mu_N(f))_{N\in\N}$ of subsets of $\mu(f)$ converging to $\mu(f)$,
that is $\mu_N(f)\subset\mu_{N+1}(f)$ and $\mu(f)=\bigcup_N\mu_N(f)$, and
such that $\mu_N(f) = - \mu_N(f)$.
%
\subsubsection{Reduction to a system in Fourier variables}
A remarkable feature of the nonlinearity we are going to analyse is
that if $u_1$, $u_2$ are Besicovitch almost-periodic, then
$(u_1\cdot\nabla)\,u_2$ is also in the same class. This result on
product of almost periodic functions is not true in general, but
as we will see in our case it holds since the spectrum of
the nonlinearity is contained in the module generated by $\sigma(u_1)$
and $\sigma(u_2)$. Having this in mind, we recast
problem~\eqref{eq:bes_nl2} in Fourier variables,
\begin{equation}
  \label{eq:bes_nl3}
  \im\xi U_\xi - \nu\Delta U_\xi
  + \sum_{\eta+\theta=\xi}\bigl[ (U_\eta\cdot\nabla)\,U_\theta +
  (U_\eta\cdot\nabla)\,w_\theta + (w_\eta\cdot\nabla)\,U_\theta\bigr]   +
  \nabla P_\xi = f_\xi, 
\end{equation}
for $\xi\in\mu(f)$, with $\nabla\cdot U_\xi = 0$, and the sum in the formula
above is extended over all $\eta,\theta\in\mu(f)$.

We shall use the following strategy to prove Theorem~\ref{thm:bes_nl}. We 
linearise the nonlinearity (by introducing an auxiliary field $\tildeU$) and 
solve the new linearised problem (in two steps, first for a finite number
of modes, then for all modes). The assumption on the viscosity allows to
have a uniquely defined map that gives a solution to the linearised problem
for each field $\tildeU$. The same assumption ensures that this map is a
contraction and its fixed point is the solution to problem~\eqref{eq:bes_nl}.
%
\subsubsection{Preliminary tools}
We prove two preliminary tools for the analysis of the problem.

We first consider the fields $w$ and $F$ defined respectively as
in~\eqref{eq:v0_def} and~\eqref{eq:F_def} and prove the following
estimates in terms of $\phi_*$.
\begin{lemma}
  \label{lem:bes_nl_tool1}
  Let be given $f\in\bes^{1,2}(\R)$, assume that~\eqref{eq:bes_phi1}
  holds. Then, there is $c>0$ (independent of $\nu$) such that
  \[
  \begin{gathered}
    \|\Delta w\|_{\bes^2(R;L^2)} \leq c\bigl(1 +
    \tfrac1\nu\bigr)\|f\|_{\bes^{1,2}(\R)},
    \\
    \|\pi\|_{\bes^2(\R)} +
    \|\partial_t w\|_{\bes^2(\R;L^2)}, \leq c(1 + \nu)\|f\|_{\bes^{1,2}(\R)}
    \\
    \sum_\xi\|\Delta w_{\xi}\|_{L^2} \leq c\bigl(1 +
    \tfrac1\nu\bigr)\phi_\star, 
    \\
    \sum_\xi \bigl(|\pi_{\xi}|
    + |\xi|\|w_{\xi}\|_{L^2}\bigr) \leq c(1 + \nu)\phi_\star,
    \\
    \|F\|_{\bes^2(\R;L^2)} \leq c\bigl((1+\nu) +
    (1+\tfrac1\nu)^2\phi_\star\bigr)\|f\|_{\bes^{1,2}(\R)},
    \\
    \sum_\xi\|f_\xi\|_{L^2} \leq c\bigl((1+\nu) +
    (1+\tfrac1\nu)^2\phi_\star\bigr)\phi_\star.
  \end{gathered}
  \]
\end{lemma}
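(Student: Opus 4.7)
The proof proposal is to derive all six bounds by reducing to the pipe-wise estimates supplied by Theorem~\ref{thm:bes_basic} and Proposition~\ref{prop:regL1}, exploiting the fact that the passage from the basic flows $(w_i,\pi_i)$ to the extended field $w$ is \emph{linear in the space variable only}, so that it commutes with the Fourier decomposition in $t$ and preserves every norm that only sees the time variable.

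First, I would apply Theorem~\ref{thm:bes_basic} to each $w_i$ in $O_i$. Rewriting the bounds with the single quantity $\|f\|_{\bes^{1,2}}^2 \sim \|f\|_{\bes^2}^2 + \|f'\|_{\bes^2}^2$ gives
\[
  \|\Delta w_i\|_{\bes^2(L^2)} \leq c(1+\tfrac1\nu)\|f\|_{\bes^{1,2}},
  \quad
  \|\pi_i\|_{\bes^2} + \|\partial_t w_i\|_{\bes^2(L^2)} \leq c(1+\nu)\|f\|_{\bes^{1,2}},
\]
and analogously Proposition~\ref{prop:regL1} gives the corresponding $\sum_\xi$ bounds in terms of $\phi_\star$. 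The definitions \eqref{eq:V0_def} and \eqref{eq:v0_def} only involve spatial cut-offs $\phi_i$ and the (spatially acting) Bogovski{\u\i} operator $\mathcal{B}$, so the bounds transfer verbatim to $w$ with a constant depending only on $O_0,O_1,O_2$. This handles the first four inequalities.

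For $F$, the key observation is that $F$ is supported in the bounded set $O^1$: outside $O^1$ the field $w$ coincides with a Poiseuille-type profile $w_i(t,x)$ independent of $z$, so $(w\cdot\nabla)w=0$, $\phi_i\equiv1$ hence $\partial_z(z\phi_i\pi_i)=\pi_i$, and the basic-flow equation produces exact cancellation. On $O^1$ the Sobolev embedding $H^2\subset L^\infty$ (in dimension three on a bounded domain) is available with uniform constant. The linear pieces $\partial_t w$, $\nu\Delta w$ and the pressure terms give directly a contribution bounded by $c(1+\nu)\|f\|_{\bes^{1,2}}$ (respectively $c(1+\nu)\phi_\star$) from the estimates already established.

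The only delicate point — and the main obstacle — is the nonlinear term $(w\cdot\nabla)w$, which translates in Fourier variables into a convolution $\bigl((w\cdot\nabla)w\bigr)_\xi=\sum_{\eta+\theta=\xi}(w_\eta\cdot\nabla)w_\theta$. I would estimate
\[
  \|((w\cdot\nabla)w)_\xi\|_{L^2(O^1)}
    \leq \sum_{\eta+\theta=\xi}\|w_\eta\|_{L^\infty(O^1)}\,\|\nabla w_\theta\|_{L^2(O^1)}
    \leq C\sum_{\eta+\theta=\xi}\|\Delta w_\eta\|_{L^2}\,\|\nabla w_\theta\|_{L^2},
\]
and then invoke Young's convolution inequality $\ell^1\ast\ell^2\hookrightarrow\ell^2$ to bound
\[
  \|(w\cdot\nabla)w\|_{\bes^2(L^2(O^1))}
    \leq C\Bigl(\sum_\eta\|\Delta w_\eta\|_{L^2}\Bigr)\|\nabla w\|_{\bes^2(L^2)},
\]
while for the summed version I use $\ell^1\ast\ell^1\hookrightarrow\ell^1$. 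The first factor is $\leq c(1+\tfrac1\nu)\phi_\star$ by the third inequality already proved; the second factor is $\leq c(1+\tfrac1\nu)\|f\|_{\bes^{1,2}}$, obtained by interpolating $\|\nabla w_\xi\|_{L^2}^2\leq\|\Delta w_\xi\|_{L^2}\|w_\xi\|_{L^2}$ mode by mode and applying Cauchy--Schwarz in $\xi$. Combining yields the $c(1+\tfrac1\nu)^2\phi_\star\|f\|_{\bes^{1,2}}$ contribution to $F$; the analogous $\ell^1$ computation yields the $(1+\tfrac1\nu)^2\phi_\star^2$ term for $\sum_\xi\|F_\xi\|_{L^2}$, completing the last two bounds. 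The whole difficulty is really concentrated in legitimising the convolution identity for the nonlinearity in the Besicovitch setting, which is why the assumption \eqref{eq:bes_phi1} on $\phi_\star$ is needed to ensure absolute summability of the relevant spectral sums.
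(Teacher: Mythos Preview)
Your proposal is correct and follows essentially the same route as the paper: the first four bounds come straight from Theorem~\ref{thm:bes_basic} and Proposition~\ref{prop:regL1} together with the linearity (in space only) of the extension~\eqref{eq:v0_def}; for $F$ one reduces to the convolution $\bigl((w\cdot\nabla)w\bigr)_\xi=\sum_{\eta+\theta=\xi}(w_\eta\cdot\nabla)w_\theta$, bounds each summand by $c\|\nabla w_\eta\|_{L^2}\|\Delta w_\theta\|_{L^2}$, and applies Young's inequality $\ell^1*\ell^2\hookrightarrow\ell^2$ (resp.\ $\ell^1*\ell^1\hookrightarrow\ell^1$). The only cosmetic difference is that the paper puts $\|\nabla w_\cdot\|$ in $\ell^1$ and $\|\Delta w_\cdot\|$ in $\ell^2$, whereas you do the opposite; and your interpolation step for $\|\nabla w\|_{\bes^2(L^2)}$ is an unnecessary detour, since Poincar\'e applied modewise gives $\|\nabla w_\xi\|_{L^2}\leq c\|\Delta w_\xi\|_{L^2}$ directly.
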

\begin{proof}
  The inequalities for $w$, $\pi$ are a straightforward consequence of
  Theorem~\ref{thm:bes_basic}, Proposition~\ref{prop:regL1} and the
  definition~\eqref{eq:v0_def}. To prove the inequalities for $F$, we
  only need to consider the term $(w\cdot\nabla)\,w$ (the estimate of
  the other terms in $F$ follow from the estimates for $w$ and
  $\pi$). For $\xi\in\mu(f)$, writing the explicit expression
  for $  [(w\cdot\nabla)\,w]_\xi
    = \sum_{\eta+\theta=\xi} (w_\eta\cdot\nabla)\,w_\theta$, we obtain that
\[
\|[(w\cdot\nabla)\,w]_\xi\|_{L^2}
\leq c \sum_{\eta+\theta=\xi} \|\nabla w_\eta\|_{L^2}\|\Delta w_\theta\|_{L^2},
\]
hence
\[
\begin{aligned}
  \|(w\cdot\nabla)\,w\|_{\bes^2(\R;L^2)}^2 \leq
  \sum_\xi\Bigl(\sum_{\eta+\theta=\xi} \|\nabla w_\eta\|_{L^2}\|\Delta
  w_\theta\|_{L^2}\Bigr)^2 
  \\
  \leq \Bigl(\sum_\xi \|\nabla
  w_\xi\|_{L^2}\Bigr)^2\|\Delta w\|_{\bes^2(\R;L^2)}^2,
\end{aligned}
\]
and also 
\[
\begin{aligned}
  \sum_\xi\|[(w\cdot\nabla)\,w]_\xi\|_{L^2} \leq
  \sum_\xi\sum_{\eta+\theta=\xi} \|\nabla w_\eta\|_{L^2}\|\Delta
  w_\theta\|_{L^2} 
  \\
  \leq \Bigl(\sum_\xi \|\nabla
  w_{\xi}\|_{L^2}\Bigr)\Bigl(\sum_\xi\|\Delta w_{\xi}\|_{L^2}\Bigr),
\end{aligned}
\]
which complete the proof.
\end{proof}
The second result is the extension to our setting of the usual
cancellation property of the nonlinear convective term, when energy
estimates are derived.
\begin{lemma}
  \label{lem:bes_nl_tool2}
  Let be given $X=\sum X_\xi \e^{\im\xi t}$ and $X=\sum X_\xi
  \e^{\im\xi t}$ both belonging to $\bes^2(\R;H^1_0)$, with $Y$
  divergence-free.  Assume that the spectra of $X$ and $Y$ are
  contained in $\mu(f)$ and fix an integer $N\geq1$. Then
  \[
  \lsum_\xi\lsum_{\eta+\theta=\xi}\int_O
  \overline{X_\xi}\cdot(Y_\eta\cdot\nabla)\,X_\theta\,dx = 0,
  \]
  where the superscript on the sum above means that the sum is
  extended only over modes in $\mu_N(f)$. 

  Moreover, the same holds true for $N=\infty$ if at least one between
  $X$ and $Y$ is in $\bes^\star(\R;H^1_0)$, with
  \begin{equation*}
    \bes^\star(\R;H^1_0) :=\Big\{f\in L^1_{loc}(\R):\ 
    \|f\|_{\bes^\star(\R;H^1_0)}:=\sum_{\xi\in\sigma(f)}(1+|\xi|)\|f_\xi\|_{H^1}<+\infty\Big\}.
  \end{equation*}
\end{lemma}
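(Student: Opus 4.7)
The plan is to reduce the identity to the classical pointwise cancellation: for a divergence-free $v \in H^1(O)$ and a real $u \in H^1_0(O)$ one has $\int_O u \cdot (v \cdot \nabla) u \,dx = 0$. The key observation is that the truncated double sum is precisely the time-mean of a trilinear expression built from suitable trigonometric polynomials. Fix $N \in \N$ and form the truncations
\[
X^{(N)}(t,x) := \sum_{\xi \in \mu_N(f)} X_\xi(x) \e^{\im \xi t},
\qquad
Y^{(N)}(t,x) := \sum_{\eta \in \mu_N(f)} Y_\eta(x) \e^{\im \eta t},
\]
which are trigonometric polynomials in $t$ (only finitely many coefficients $X_\xi$, $Y_\eta$ with index in $\mu_N(f)$ are nonzero, because $\sigma(X),\sigma(Y)\subseteq\mu(f)$ is at most countable and $\mu_N(f)$ is, in effect, a finite window). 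Since $X,Y$ are real valued the coefficients satisfy $X_{-\xi} = \overline{X_\xi}$ and $Y_{-\eta} = \overline{Y_\eta}$, and because $\mu_N(f)$ is symmetric both truncations are real. Spatial divergence commutes with the time Fourier expansion, so $\nabla\cdot Y = 0$ forces $\nabla\cdot Y_\eta = 0$ for every $\eta$, hence $\nabla\cdot Y^{(N)} = 0$; similarly $X^{(N)}(t,\cdot) \in H^1_0(O)$ for every $t$.

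Applying the pointwise identity at each $t$ yields $\int_O X^{(N)} \cdot (Y^{(N)} \cdot \nabla) X^{(N)} \,dx = 0$. Taking the time mean, expanding the triple product into a sum over three spectral indices, and using the orthogonality relation $\mean(\e^{\im(\alpha+\eta+\theta)t}) = 1$ when $\alpha+\eta+\theta = 0$ and $0$ otherwise, together with the reality relation that rewrites the surviving index $\alpha = -\xi$ via $X_{-\xi} = \overline{X_\xi}$, produces exactly
\[
0 = \mean\Bigl(\int_O X^{(N)} \cdot (Y^{(N)} \cdot \nabla) X^{(N)} \,dx\Bigr)
  = \lsum_\xi \lsum_{\eta + \theta = \xi} \int_O \overline{X_\xi} \cdot (Y_\eta \cdot \nabla) X_\theta \,dx,
\]
which settles the finite-$N$ statement.

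For $N = \infty$ the plan is to pass to the limit by dominated summation. The trilinear Sobolev estimate (from $H^1(O) \hookrightarrow L^4(O)$) yields
\[
\Bigl|\int_O \overline{X_\xi} \cdot (Y_\eta \cdot \nabla) X_\theta \,dx\Bigr| \leq C\,\|X_\xi\|_{H^1} \|Y_\eta\|_{H^1} \|X_\theta\|_{H^1},
\]
so the absolute convergence of the unrestricted sum reduces to finiteness of $\sum_\xi \sum_{\eta+\theta=\xi} \|X_\xi\|_{H^1}\|Y_\eta\|_{H^1}\|X_\theta\|_{H^1}$. By Young's convolution inequality $\|a \star b\|_{\ell^2} \leq \|a\|_{\ell^1}\|b\|_{\ell^2}$, applied with $a_\eta := \|Y_\eta\|_{H^1}$, $b_\theta := \|X_\theta\|_{H^1}$, followed by Cauchy--Schwarz against $c_\xi := \|X_\xi\|_{H^1}$, this quantity is bounded by $\|Y\|_{\bes^\star(\R;H^1_0)}\|X\|_{\bes^2(\R;H^1)}^2$, finite by hypothesis. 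The symmetric choice (putting $X$ into $\ell^1$ and $Y$ into $\ell^2$) covers the case $X\in\bes^\star$. Once absolute summability is in hand, the finite-sum identity passes to the limit $N\to\infty$ and yields the result.

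The main obstacle I expect is purely algebraic: keeping track, in the time-mean expansion, of the reindexing that converts the constraint $\alpha+\eta+\theta=0$ (coming from orthogonality of exponentials) into $\eta+\theta=\xi$ with a complex conjugate on $X_\xi$. The symmetry of $\mu_N(f)$ and the reality relations $X_{-\xi}=\overline{X_\xi}$, $Y_{-\eta}=\overline{Y_\eta}$ are what make this reindexing work and ensure that the truncations inherit the divergence-freeness and the homogeneous boundary conditions; after this is checked, the integration-by-parts cancellation is entirely standard.
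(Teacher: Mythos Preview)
Your proof is correct. For the finite-$N$ case you and the paper rely on the same underlying ingredients---the antisymmetry of the trilinear form under integration by parts, the symmetry $\mu_N(f)=-\mu_N(f)$, and the reality relation $X_{-\xi}=\overline{X_\xi}$---but the executions differ. The paper works purely at the coefficient level: it rewrites the double sum as a single sum over the constraint $\xi+\eta+\theta=0$, applies integration by parts to swap the two $X$-factors, and concludes that the sum equals its own negative, hence vanishes. You instead reconstruct the real trigonometric polynomials $X^{(N)}$, $Y^{(N)}$, invoke the pointwise identity $\int_O X^{(N)}\cdot(Y^{(N)}\cdot\nabla)X^{(N)}\,dx=0$ at each fixed $t$, and then recover the desired sum as the time mean via orthogonality of exponentials. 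Your route is slightly longer but makes the origin of the cancellation transparent; the paper's algebraic argument is more compact. For $N=\infty$ the two proofs coincide: the $H^1$ trilinear bound together with Young's convolution inequality gives absolute summability and allows passage to the limit. One minor wording issue: your truncations are trigonometric polynomials simply because $\mu_N(f)$ is, by construction in the paper, a \emph{finite} set---the countability of the spectrum is not the relevant point there.
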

\begin{proof}
  Let us denote by $\term{n}$ the sum in the statement of the lemma,
  then by a change of summation index
  \[
  \term{n} = \lsum_{\xi+\eta+\theta = 0}\int_O
  \overline{X_\xi}\cdot(Y_\eta\cdot\nabla)\,X_\theta\,dx = -
  \lsum_{\xi+\eta+\theta = 0}\int_O
  \overline{X_\theta}\cdot(Y_\eta\cdot\nabla)\,X_\xi\,dx, = - \term{n},
  \]
  since $\overline{X_\xi} = X_{-\xi}$, and the claim is true. To show
  that the same holds for $N=\infty$, it is sufficient to prove that
  the following sum extended is bounded uniformly in $N$. Indeed, if
  $Z$ is another field,
  \[
  \begin{aligned}
    \lsum_\xi\lsum_{\eta+\theta=\xi}\int_O
    \overline{X_\xi}\cdot(Y_\eta\cdot\nabla)\,Z_\theta\,dx &\leq
    \lsum_\xi\lsum_{\eta+\theta=\xi} \|\nabla X_\xi\|_{L^2} \|\nabla
    Y_\eta\|_{L^2} \|\nabla Z_\theta\|_{L^2}
    \\
    &\leq \|\nabla X\|_{\bes^2(H^1_0)} \|\nabla Y\|_{\bes^2(H^1_0)}
    \|Z\|_{\bes^\star(H^1_0)},
  \end{aligned}
  \]
  by Young's inequality for convolutions.
\end{proof}
We observe that in the cancellation property above it is fundamental
that we deal with the complex conjugate of $X$ and with the fact that
the spectrum is symmetric, since flux and solution are both
real--valued.
%

The proof of Theorem~\ref{thm:bes_nl} is split into three preliminary
steps.
\subsubsection{First step: existence for the finite modes
  approximation}
Given $\tildeU\in\bes^2(\R;H^1_0)$ with $\nabla\cdot\tildeU = 0$ and
an integer $N\geq1$, we seek for a solution to the following problem,
\begin{equation}
  \label{eq:bes_nl_aux}
  \im\xi U_\xi - \nu\Delta U_\xi
  + \lsum_{\eta+\theta=\xi}\bigl[ (\tildeU_\eta\cdot\nabla)U_\theta +
  (\tildeU_\eta\cdot\nabla)w_\theta +  (w_\eta\cdot\nabla)U_\theta\bigr] 
  + \nabla P_\xi = f_\xi,
\end{equation}
for $\xi\in\mu_N(f)$, where again the superscript on the above sum
means that the sum is extended only over modes in $\mu_N(f)$.
\begin{proposition}
  \label{prop:bes_aux}
  Let $f\in\bes^{1,2}(\R)$ and $\tildeU\in\bes^{1,2}(\R;H^1_0)$ and
  set
  \[
  \phi_\star^N := \lsum_\xi (1 + |\xi|)|f_\xi|\qquad\text{and} \qquad
  \psi_\star^N := \lsum_\xi \|\nabla\tildeU_\xi\|_{L^2}.
  \]
  Then, there is at least one solution $U^{(N)}$ to
  problem~\eqref{eq:bes_nl_aux}.  Moreover, there are non-negative
  $c_1(\cdot,\dots)$ and $c_2(\cdot,\dots,\dots)$ increasing functions
  of their arguments and (depending only on the domain $O$) such that
  \begin{equation}
    \label{eq:bes_nl_unif}
    \|U^{(N)}\|_{B^{\frac12,2}(\R;L^2)} + \|U^{(N)}\|_{B^2(\R;H^1)}
    \leq c(\nu,\|f\|_{\bes^{1,2}(\R)})\psi_\star^N + c(\nu,\phi_\star^N,\|f\|_{\bes^{1,2}(\R)}).
  \end{equation}
  If additionally
  \[
  \nu > \psi_\star^N + c(1 + \tfrac1\nu)\phi_\star^N,
  \]
  where $c$ is the constant of Lemma~\ref{lem:bes_nl_tool1}, then the solution
  is unique and
  \begin{equation}
    \label{eq:bes_nl_unif2}
    \begin{aligned}
      &\bigl(\nu - \psi_\star^N - c(1 +
      \tfrac1\nu)\phi_\star^N\bigr)\Bigl(\lsum_\xi \|\nabla
      U_\xi^{(N)}\|_{L^2}\Bigr)
      \\
      &\qquad\leq c(1+\nu)\phi_\star^N + c(1+\tfrac1\nu)^2(\phi_\star^N)^2 +
      c(1+\tfrac1\nu)\phi_\star^N\psi_\star^N.
    \end{aligned}
  \end{equation}
\end{proposition}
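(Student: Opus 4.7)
The plan is to construct $U^{(N)}$ by a spatial Galerkin scheme that exploits the cancellation in Lemma~\ref{lem:bes_nl_tool2} to get coercivity, and then to derive~\eqref{eq:bes_nl_unif2} from a per-mode Stokes resolvent bound summed in $\ell^1$ rather than $\ell^2$.

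For fixed $N$ I would pick an $L^2(O)$-orthonormal basis $\{\phi_k\}$ of divergence-free $H^1_0$-fields (Stokes eigenfunctions) and look for $U^{(N,M)}_\xi = \sum_{k=1}^M c^k_\xi \phi_k$ with the reality constraint $\overline{c^k_\xi}=c^k_{-\xi}$. Testing~\eqref{eq:bes_nl_aux} against $\phi_\ell$ reduces to a finite-dimensional linear system in the $(c^k_\xi)$, whose unique solvability follows from Lax--Milgram applied to the sesquilinear form
\[
  a(U,V) := \lsum_\xi\bigl[\im\xi(U_\xi,V_\xi) + \nu(\nabla U_\xi,\nabla V_\xi)\bigr] + \lsum_\xi\lsum_{\eta+\theta=\xi}\bigl[((\tildeU_\eta\cdot\nabla)U_\theta,V_\xi) + ((w_\eta\cdot\nabla)U_\theta,V_\xi)\bigr],
\]
since Lemma~\ref{lem:bes_nl_tool2} (applied to both $\tildeU$ and $w$, which are divergence free) reduces the real part of $a(U,U)$ to $\nu\lsum_\xi\|\nabla U_\xi\|_{L^2}^2$. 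Testing with $V = U^{(N,M)}$ and estimating the linear functional $L(V) = \lsum_\xi(F_\xi,V_\xi) - \lsum_\xi\lsum_{\eta+\theta=\xi}((\tildeU_\eta\cdot\nabla)w_\theta,V_\xi)$ by Cauchy--Schwarz, the three-dimensional Sobolev embedding $H^1\hookrightarrow L^3\cap L^6$, and Young's convolution $\ell^1\ast\ell^2\hookrightarrow\ell^2$ (with $\tildeU$ on the $\ell^1$ side), one obtains a uniform-in-$M$ bound $\nu\|\nabla U^{(N,M)}\|_{\bes^2(L^2)} \leq C\psi^N_\star\|\nabla w\|_{\bes^2(L^2)} + C\|F\|_{\bes^2(L^2)}$. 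Inserting the bounds from Lemma~\ref{lem:bes_nl_tool1} on $\|\nabla w\|_{\bes^2(L^2)}$ and $\|F\|_{\bes^2(L^2)}$ yields the $\bes^2(\R;H^1_0)$ piece of~\eqref{eq:bes_nl_unif}. For the complementary $\bes^{1/2,2}(\R;L^2)$ piece, I would apply the Leray projector to reach the identity $\im\xi U_\xi + \nu AU_\xi = P_L(F_\xi - [\text{conv}]_\xi)$, deduce $\|\partial_t U\|_{\bes^2(H^{-1})}<\infty$ by $H^{-1}$-duality, and interpolate with the $\bes^2(H^1_0)$ bound just obtained. Passing $M\to\infty$ by weak compactness and linearity produces $U^{(N)}$.

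For the refined inequality~\eqref{eq:bes_nl_unif2} I would work mode by mode: testing the equation for $U^{(N)}_\xi$ with $\overline{U^{(N)}_\xi}$, taking real parts, and using Poincar\'e gives the per-mode Stokes resolvent bound $\nu\|\nabla U^{(N)}_\xi\|_{L^2}\leq C\|G_\xi\|_{H^{-1}}$, where
\[
  G_\xi = F_\xi - \lsum_{\eta+\theta=\xi}\bigl[(\tildeU_\eta\cdot\nabla)U_\theta + (\tildeU_\eta\cdot\nabla)w_\theta + (w_\eta\cdot\nabla)U_\theta\bigr].
\]
Writing $(a\cdot\nabla)b = \nabla\cdot(a\otimes b)$ for divergence-free $a$ and using $H^1\hookrightarrow L^4$ bounds each convective piece in $H^{-1}$ by a product $\|\nabla X_\eta\|_{L^2}\|\nabla Y_\theta\|_{L^2}$; summing in $\xi$ and applying Young's $\ell^1\ast\ell^1\hookrightarrow\ell^1$, together with the $\ell^1$-bounds of Lemma~\ref{lem:bes_nl_tool1} on $w$ and $F$, one is led to
\[
  \nu\lsum_\xi\|\nabla U^{(N)}_\xi\|_{L^2} \leq C\bigl[\psi^N_\star + (1+\tfrac1\nu)\phi^N_\star\bigr]\lsum_\xi\|\nabla U^{(N)}_\xi\|_{L^2} + (\text{data terms}),
\]
where the data terms are exactly those appearing on the right-hand side of~\eqref{eq:bes_nl_unif2}. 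The hypothesis $\nu>\psi^N_\star + c(1+\tfrac1\nu)\phi^N_\star$ is precisely the threshold at which the $U$-dependent term can be absorbed on the left, delivering~\eqref{eq:bes_nl_unif2}. Uniqueness follows immediately by applying the same $\ell^1$ inequality to the difference of two solutions, whose right-hand side is then zero.

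The main obstacle I expect is exactly this $\ell^2$-to-$\ell^1$ transition in the third step. The Hilbert-space energy identity is forgiving because of the cancellation in Lemma~\ref{lem:bes_nl_tool2}, but the $\ell^1$-sum tolerates no loss: the argument only closes thanks to the stronger $\ell^1$-Besicovitch bounds on $w$ and $F$ supplied by Lemma~\ref{lem:bes_nl_tool1} (themselves hinging on assumption~\eqref{eq:bes_phi1}), and the smallness of $\nu$ is not a slack in the estimate but the genuine threshold needed to dominate the quadratic-in-$U$ convective coupling at the $\ell^1$-level.
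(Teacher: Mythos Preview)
Your proposal is correct and follows essentially the same route as the paper. The only notable differences are cosmetic: (i) for existence of the finite-dimensional Galerkin system you invoke Lax--Milgram, which is natural since the linearised problem~\eqref{eq:bes_nl_aux} is genuinely linear in $U$; the paper instead appeals to Fujita's Brouwer-based technique from Temam, which is standard for steady Navier--Stokes but unnecessary here. (ii) For the $\bes^{1/2,2}(\R;L^2)$ piece of~\eqref{eq:bes_nl_unif} you propose bounding $\partial_t U\in\bes^2(\R;H^{-1})$ and interpolating, whereas the paper is terse on this point and effectively reads it off the imaginary part of the per-mode energy identity; both are valid. Your derivation of~\eqref{eq:bes_nl_unif2}---test with $\overline{U_\xi}$, extract the per-mode resolvent bound $\nu\|\nabla U_\xi\|\leq C\|G_\xi\|_{H^{-1}}$, then sum in $\ell^1$ with Young's $\ell^1\ast\ell^1\hookrightarrow\ell^1$---is exactly the paper's ``divide by $\|\nabla U_\xi\|_{L^2}$ and sum'' step in slightly different language, and your identification of the $\ell^2$-to-$\ell^1$ transition as the crux is on point.
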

\begin{proof}
  The proof can be carried on with the standard technique of Fujita
  (cf.~Theorem 1.4 of~\cite[Ch.~2]{Tem2001}) for the case of existence
  of solutions for the steady Navier-Stokes equations in unbounded
  domains (but we have the additional advantage of the Poincar\'e
  inequality~\eqref{eq:Poincare}). We use Galerkin approximations $U^n
  = \sum_{k=1}^n u^k e_k$ (not necessarily made with eigenfunctions)
  and we consider the projection of~\eqref{eq:bes_nl_aux} on the
  finite dimensional Galerkin space as a problem on $\R^{2nN}$ (the
  real and imaginary parts of each $u^k$ count as two variables) with
  the scalar product induced by the one of $H^1_0(O)^{\otimes 2N}$.

  We show existence of a solution of the finite dimensional problem by
  means of Lemma 1.4 of~\cite[Chapter 2]{Tem2001} (which in turns is a
  consequence of Brouwer's fixed point theorem). Let $P(U^n)$ be given
  component-wise by the projection of~\eqref{eq:bes_nl_aux} so that if
  $P(U^n)=0$ then $U^n$ is the solution to the Galerkin projected
  problem. It is sufficient to show that $P(U^n)\cdot U_n >0$ on
  $|U^n| = C_0$ for some $C_0>0$, where product and norm are those we
  have given on $\R^{2nN}$. This is immediate by
  Lemma~\ref{lem:bes_nl_tool2} since
  \[
  \begin{aligned}
    P(U^n)\cdot U^n
    &\geq \nu|U^n|^2
    - \lsum_\xi\lsum_{\eta+\theta=\xi}|U^n_\xi|\|\nabla\tildeU_\eta\|_{L^2}\|\nabla w_\theta\|_{L^2}
    - c\|F\|_{\bes^2(L^2)}|U^n|
    \\
    &\geq \nu|U^n|^2
    - \psi_\star^N \|\nabla w\|_{\bes^2(L^2)} |U^n|
    - c\|F\|_{\bes^2(L^2)}|U^n|,
  \end{aligned}
  \]
  which is strictly positive if we choose $\nu$ such that
  \[
  C_0>
  \tfrac1\nu\bigl(\psi_\star^N \|\nabla w\|_{\bes^2(L^2)} + c\|F\|_{\bes^2(L^2)}\bigr).
  \]
  Passing to the limit in the Galerkin approximation is standard (the
  non-linearity contains a finite sum) and follows from uniform bounds
  (in $n$) on $U^n$ which are similar to~\eqref{eq:bes_nl_unif} and
  whose proof is formally similar.  Hence, we
  prove~\eqref{eq:bes_nl_unif} directly. For each $\xi\in\mu_N(f)$
  multiply~\eqref{eq:bes_nl_aux} by $\overline{U_\xi}$, integrate by
  parts on $O$, sum over $\xi\in\mu_N(f)$, and use
  Lemma~\ref{lem:bes_nl_tool2} to get
  \begin{multline*}
    \im\lsum_\xi \xi\|U_\xi\|_{L^2}^2
    + \nu\lsum_\xi \|\nabla U_\xi\|_{L^2}^2 +
    \\
    + \lsum_\xi\lsum_{\eta+\theta=\xi}\int_O
    \overline{U_\xi}\cdot(\tildeU_\eta\cdot\nabla)\,w_\theta\,dx 
    = \lsum_\xi\int_O \overline{U_\xi}\cdot f_\xi.
  \end{multline*}
  Hence by Young's inequality for convolutions and taking the real
  part we get
  \[
  \begin{aligned}
    \nu\|\nabla U\|_{\bes^2(L^2)}^2 &\leq
    \lsum_\xi\lsum_{\eta+\theta=\xi} \|\nabla U_\xi\|_{L^2}
    \|\nabla\tildeU_\eta\|_{L^2} \|\nabla w_\theta\|_{L^2} +
    c\|F\|_{\bes(L^2)}\|\nabla U\|_{\bes^2(L^2)}
    \\
    &\leq \psi_\star^N \|\nabla w\|_{\bes^2(L^2)} \|\nabla
    U\|_{\bes^2(L^2)} + c\|F\|_{\bes(L^2)}\|\nabla U\|_{\bes^2(L^2)},
  \end{aligned}
  \]
  and so inequality~\eqref{eq:bes_nl_unif} follows from Lemma~\ref{lem:bes_nl_tool1}.

  To prove~\eqref{eq:bes_nl_unif2}, multiply~\eqref{eq:bes_nl_aux} by
  $\overline{U_\xi}$, integrate by parts on $O$ and \emph{divide} by
  the non-zero $\|\nabla U_\xi\|_{L^2}$ to get
  \[
  \nu\|\nabla U_\xi\|_{L^2} \leq c\|f_\xi\| + \lsum_{\eta+\theta=\xi}
  \bigl(\|\nabla\tildeU_\eta\|_{L^2}\|\nabla U_\theta\|_{L^2} +
  \|\nabla\tildeU_\eta\|_{L^2}\|\nabla w_\theta\|_{L^2} + \|\nabla
  w_\eta\|_{L^2}\|\nabla U_\theta\|_{L^2} \bigr).
  \]
  Inequality~\eqref{eq:bes_nl_unif2} follows by summing in $\xi$, using
  Young's convolution inequality and Lemma~\ref{lem:bes_nl_tool1}.
  
  Finally, if $U_1^{(N)}$, $U_2^{(N)}$ are two solutions corresponding
  to the same data, let $D^{(N)}:= U_1^{(N)} - U_2^{(N)}$ and $Q^{(N)}:=
  P_1^{(N)} - P_2^{(N)}$, then
  \[
  \im\xi D^{(N)}_\xi - \nu\Delta D^{(N)}_\xi + \lsum_{\eta+\theta=\xi}
  \bigl(\tildeU_\eta\cdot\nabla)\,D^{(N)}_\theta +
  (w_\eta\cdot\nabla)\,D^{(N)}_\theta\bigr) + \nabla Q^{(N)}_\xi= 0,
  \]
  and hence taking the scalar product with $\overline{D^{(N)}_\xi}$
  \[
  \begin{aligned}
    \nu\|\nabla D^{(N)}\|_{\bes^2(\R;L^2)}^2
    &\leq \lsum_\xi\lsum_{\eta+\theta=\xi} \|\nabla w_\eta\|_{L^2}
    \|\nabla D^{(N)}_\xi\|_{L^2} \|\nabla D^{(N)}_\theta\|_{L^2}
    \\
    &\leq \Bigl(\lsum_\xi \|\nabla w_\eta\|_{L^2}\Bigr)\|\nabla
    D^{(N)}\|_{\bes^2(\R;L^2)}^2,
  \end{aligned}
  \]
  which, by the assumption on $\nu$, implies that $D\equiv0$.
\end{proof}
\subsubsection{Second step: existence of a limit as
  \texorpdfstring{$N\to\infty$}{N goes to infinity}} 
Let $f\in\bes^{1,2}(\R)\cap \bes^\star(\R)$ and assume additionally that the quantity
\[
\phi_\star := \sum_\xi (1+|\xi|)|f_\xi|
\]
is finite. This implies in particular, as in Lemma~\ref{prop:regL1},
that $f$ and $f'$ have representatives which are Bohr-almost
periodic.  Let $\tildeU\in\bes^2(\R;H^1_0)$, assume that the quantity
$\sum_\xi \|\nabla\tildeU_\xi\|_{L^2}$ is also finite and consider the
problem
\begin{equation}
  \label{eq:bes_nl_aux2}
  \im\xi U_\xi - \nu\Delta U_\xi
  + \sum_{\eta+\theta=\xi}\bigl[ (\tildeU_\eta\cdot\nabla)\,U_\theta +
  (\tildeU_\eta\cdot\nabla)\,w_\theta +  (w_\eta\cdot\nabla)\,U_\theta\bigr] 
  + \nabla P_\xi = f_\xi,
\end{equation}
for $\xi\in\mu(f)$.
\begin{proposition}
  \label{prop:bes_aux2}
  There exists $\nu_0>0$, with $\nu_0 = \nu_0(f,O)$, such that for
  every $\nu\geq\nu_0$ there is $\psi_\star>0$ such that
  \[
  \nu>\psi_\star + c(1+\tfrac1\nu)\phi_\star,
  \]
  and if
  \[
  \sum_\xi \|\nabla\tildeU_\xi\|_{L^2}\leq\psi_\star
  \]
  then there is a unique solution
  $U\in\bes^2(\R;H^1_0(O))\cap\bes^{1/2,2}(\R;L^2(O))$ to
  problem~\eqref{eq:bes_nl_aux2}.
  
  Moreover,
  \begin{equation}
    \label{eq:bes_nl_unif3}
    \begin{aligned}
      &\|U\|_{B^{\frac12,2}(\R;L^2)} + \|U\|_{B^2(\R;H^1_0)}
      \leq c(\nu,\|f\|_{\bes^{1,2}(\R)})\psi_\star +
      c(\nu,\phi_\star,\|f\|_{\bes^{1,2}(\R)}),
      \\
      &\sum_\xi \|\nabla U_\xi\|_{L^2}\leq \psi_\star
    \end{aligned}
  \end{equation}
\end{proposition}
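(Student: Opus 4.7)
The plan is to produce $U$ as the weak limit of the finite-mode solutions $U^{(N)}$ given by Proposition~\ref{prop:bes_aux}, with $\nu_0$ and $\psi_\star$ chosen so that the a priori bound~\eqref{eq:bes_nl_unif2} is self-improving, i.e.\ so that the associated solution map sends the ball $\{\sum_\xi\|\nabla\tildeU_\xi\|_{L^2}\leq\psi_\star\}$ into itself. Set $\alpha(\nu):=c(1+1/\nu)\phi_\star$ and $\beta(\nu):=c(1+\nu)\phi_\star+c(1+1/\nu)^2\phi_\star^2$, the constants coming from Lemma~\ref{lem:bes_nl_tool1} and~\eqref{eq:bes_nl_unif2}. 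The compatibility between the uniqueness hypothesis $\nu>\psi_\star+\alpha(\nu)$ and the estimate $(\beta+\alpha\psi_\star)/(\nu-\psi_\star-\alpha)\leq\psi_\star$ reduces to the quadratic inequality $\psi_\star^2+(2\alpha-\nu)\psi_\star+\beta\leq 0$; its discriminant $(\nu-2\alpha)^2-4\beta$ becomes positive for $\nu\geq\nu_0$ with $\nu_0=\nu_0(f,O)$ large enough (as $\nu-2\alpha\sim\nu$ while $\beta=O(\nu\phi_\star)$), and for every such $\nu$ I fix $\psi_\star$ equal to the smaller root of the quadratic. Both the uniqueness condition and the self-improving bound then hold.

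\textbf{Uniform estimates and weak limit.} With this choice and the standing hypothesis $\sum_\xi\|\nabla\tildeU_\xi\|_{L^2}\leq\psi_\star$, the truncations satisfy $\phi_\star^N\leq\phi_\star$ and $\psi_\star^N\leq\psi_\star$ uniformly in $N$, so Proposition~\ref{prop:bes_aux} produces a unique $U^{(N)}$ with $\lsum_\xi\|\nabla U^{(N)}_\xi\|_{L^2}\leq\psi_\star$ and the $\bes^{1/2,2}(\R;L^2)\cap\bes^2(\R;H^1_0)$-bound~\eqref{eq:bes_nl_unif}, both uniform in $N$. Weak compactness in these Hilbert spaces extracts a subsequence $U^{(N_k)}\rightharpoonup U$; in particular $U^{(N_k)}_\xi\rightharpoonup U_\xi$ in $H^1_0(O)$ for every $\xi\in\mu(f)$. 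Fatou's lemma applied to the countable sum over $\xi$ gives $\sum_\xi\|\nabla U_\xi\|_{L^2}\leq\psi_\star$, and lower semicontinuity of the $\bes$-norms gives the first line of~\eqref{eq:bes_nl_unif3}.

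\textbf{Passage to the limit in~\eqref{eq:bes_nl_aux2}.} The linear terms pass to the limit mode-by-mode immediately. For the bilinear convolution $\sum^{(N_k)}_{\eta+\theta=\xi}(\tildeU_\eta\cdot\nabla)\,U^{(N_k)}_\theta$, testing against any smooth divergence-free $v\in H^1_0(O)$, each individual summand converges by weak convergence of $\nabla U^{(N_k)}_\theta$ in $L^2(O)$; the sum is absolutely dominated by $C\|v\|_{H^1}\|\nabla\tildeU_\eta\|_{L^2}\|\nabla U^{(N_k)}_\theta\|_{L^2}$, and the $\ell^1$-bounds $\sum\|\nabla\tildeU_\eta\|\leq\psi_\star$, $\sum\|\nabla U^{(N_k)}_\theta\|\leq\psi_\star$ provide an $\ell^1$-summable majorant through Young's convolution inequality, so dominated convergence identifies the limit with $\sum_{\eta+\theta=\xi}(\tildeU_\eta\cdot\nabla)\,U_\theta$ in $H^{-1}(O)$. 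The $(w\cdot\nabla)\,U^{(N_k)}$ coupling is treated identically via $\sum\|\nabla w_\eta\|\leq\alpha(\nu)$ from Lemma~\ref{lem:bes_nl_tool1}, and the pressure $P_\xi$ is reconstructed mode-by-mode by De~Rham's theorem.

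\textbf{Uniqueness and main obstacle.} Given two solutions $U_1,U_2$ with the same $\tildeU$, the difference $D:=U_1-U_2$ satisfies the homogeneous linearised system, and the bound $\sum\|\nabla D_\xi\|_{L^2}\leq 2\psi_\star<\infty$ places $D$ in $\bes^\star(\R;H^1_0)$. Testing with $\overline{D_\xi}$, summing over $\xi\in\mu(f)$, and using Lemma~\ref{lem:bes_nl_tool2} in the $N=\infty$ regime to cancel the $(\tildeU\cdot\nabla)D$ coupling, the argument of Proposition~\ref{prop:bes_aux} yields $(\nu-\alpha(\nu))\,\|\nabla D\|_{\bes^2(\R;L^2)}^2\leq 0$, whence $D\equiv 0$. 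The real obstacle is the self-improving bookkeeping in the first paragraph: $\psi_\star$ must be small enough for the fixed-point scheme of Theorem~\ref{thm:bes_nl} to close, yet large enough that the a priori bound~\eqref{eq:bes_nl_unif2} does not exceed it; the quadratic analysis above is what makes both demands simultaneously realisable, but only for $\nu$ large, which is why the smallness of $1/\nu$ (equivalently, largeness of viscosity) must be built into $\nu_0=\nu_0(f,O)$ from the start.
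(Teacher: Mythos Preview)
Your proof is correct and follows essentially the same approach as the paper: construct the finite-mode approximations $U^{(N)}$ from Proposition~\ref{prop:bes_aux}, use the uniform bounds~\eqref{eq:bes_nl_unif} and~\eqref{eq:bes_nl_unif2} to extract a weak limit, and identify $\nu_0$ and $\psi_\star$ via the same quadratic inequality in $\psi_\star$. You are in fact more explicit than the paper in several places (the dominated-convergence argument for passing to the limit in the bilinear couplings, the use of Fatou to transfer the $\ell^1$ bound $\sum_\xi\|\nabla U_\xi\|_{L^2}\leq\psi_\star$ to the limit, and the observation that $D\in\bes^\star$ is what licenses Lemma~\ref{lem:bes_nl_tool2} with $N=\infty$ in the uniqueness step), whereas the paper simply notes that the equation is linear in $U$ once $\tildeU$ is fixed and that weak convergence suffices.
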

\begin{proof}
  Let $(U^{(N)})_{N\geq1}$ be the sequence of solutions
  to~\eqref{eq:bes_nl_aux} provided by
  Proposition~\ref{prop:bes_aux}. By~\eqref{eq:bes_nl_unif} it follows
  that $(U^{(N)})_{N\geq1}$ is bounded in $\bes^{1/2,2}(\R;L^2)$ and
  in $\bes^2(\R;H^1_0)$, hence there is a sub-sequence weakly
  convergent to a limit point
  $U\in\bes^{1/2,2}(\R;L^2)\cap\bes^2(\R;H^1_0)$. Since~\eqref{eq:bes_nl_aux2},
  weak convergence is enough to pass to the limit in the equation.
  Uniqueness follows as in Proposition~\ref{prop:bes_aux}, using the
  bound on the viscosity.
  
  We only have to identify $\nu_0$ and
  $\psi_\star$. From~\eqref{eq:bes_nl_unif2} it follows that
  \[
  \bigl(\nu - \psi_\star - c(1 +
  \tfrac1\nu)\phi_\star\bigr)\Bigl(\sum_\xi \|\nabla
  U_\xi\|_{L^2}\Bigr) \leq c(1+\nu)\phi_\star +
  c(1+\tfrac1\nu)^2(\phi_\star)^2 +
  c(1+\tfrac1\nu)\phi_\star\psi_\star,
  \]
  so everything boils down to show that for $\nu$ large enough there
  is $\psi_\star$ such that
  \[
  \frac{c(1+\nu)\phi_\star + c(1+\tfrac1\nu)^2(\phi_\star)^2 +
    c(1+\tfrac1\nu)\phi_\star\psi_\star}%
  {\nu - \psi_\star - c(1 + \tfrac1\nu)\phi_\star} \leq \psi_\star,
  \]
  that is
  \[
  \psi_\star^2
  - \bigl(\nu - 2c(1+\tfrac1\nu)\phi_\star\bigr)\psi_\star
  + c(1 + \nu)\phi_\star + c^2(1+\tfrac1\nu)^2\phi_\star^2
  \leq 0.
  \]
  It is elementary to verify that the above polynomial has two positive
  solutions for $\nu$ large enough.
\end{proof}
\begin{remark}
  Clearly, without the assumption on the size of $\nu$ in the previous proposition,
  one can still show existence of at least one solution to~\eqref{eq:bes_nl_aux2}.
  The size condition on $\nu$ is necessary only for proving uniqueness.
\end{remark}
%
\subsubsection{Third step: the fixed point argument}
Under the assumptions of Proposition~\ref{prop:bes_aux2} we have a
well defined map $\tildeU\mapsto U$, where $U$ is the solution to
problem~\eqref{eq:bes_nl_aux2}. Denote the map by $\map$, then it is
clear that any fixed point of $\map$ is a solution
to~\eqref{eq:bes_nl3} and hence to~\eqref{eq:bes_nl2}.
\begin{proof}[Proof of Theorem~\ref{thm:bes_nl}]
  Fix $\nu\geq\nu_0$, where $\nu_0$ is given in
  Proposition~\ref{prop:bes_aux2}.  We prove that the map $\map$ is a
  contraction on the set $\mathcal{X}$ of all
  $U\in\bes^2(\R;H^1_0(O))\cap\bes^{1/2,2}(\R;L^2(O))$ that verify the
  bounds~\eqref{eq:bes_nl_unif3}.
  
  The fact that $\map$ maps $\mathcal{X}$ into $\mathcal{X}$ clearly
  follows from Proposition~\ref{prop:bes_aux2}, so we only need to
  prove that $\map$ is a contraction. This is obtained as in the proof
  of uniqueness of Proposition~\eqref{prop:bes_aux}. Indeed, if
  $E:=\tildeU_1 - \tildeU_2$ and $D:=\map(\tildeU_1) -
  \map(\tildeU_2)$, then
  \[
  \im\xi E_\xi - \nu\Delta E_\xi + \sum_{\eta+\theta=\xi}
  (E_\eta\cdot\nabla)\,U_\theta^1 + (\tildeU_\eta^2\cdot\nabla)D_\theta
  + (E_\eta\cdot\nabla)\,w_\theta + \nabla Q_\xi=0,
  \]
  with a suitable $Q$. By multiplying by $\overline{D_\xi}$,
  integrating by parts, and summing over $\xi$ we get
  \[
  (\nu - \psi_\star)\|\nabla E\|_{\bes^2(L^2)} \leq \bigl(\psi_\star +
  c(1+\tfrac1\nu)\phi_\star\bigr)\|\nabla E\|_{\bes^2(L^2)}
  \]
  that is $\|\nabla E\|_{\bes^2(L^2)}\leq K_0 \|\nabla
  E\|_{\bes^2(L^2)}$, with 
  \begin{equation*}
    K_0 := \frac{\psi_\star +
      c(1+\tfrac1\nu)\phi_\star}{\nu - \psi_\star}.
  \end{equation*}
  Likewise we also have $\|D\|_{\bes^{\frac12,2}(L^2)}\leq \nu
  K_0^2\|\nabla E\|_{\bes^2(L^2)}$.  Finally, by multiplying by
  $\overline{D_\xi}$, dividing by $\|\nabla D_\xi\|_{L^2}$ and summing
  over $\xi$ we get
  \[
  \sum_\xi \|\nabla E_\xi\|_{L^2} \leq K_0 \sum_\xi \|\nabla
  E_\xi\|_{L^2}.
  \]
  In conclusion, if $\nu$ is large enough it follows that $K_0<1$ and $\nu K_0^2<1$
  and the map $\map$ is a contraction.
  
  In order to conclude the proof, we need to show that if
  $(U_\xi)_{\xi\in\mu(f)}$ is solution to~\eqref{eq:bes_nl3}, then the
  Besicovitch almost periodic vector field $U$ having Fourier
  coefficients $(U_\xi)_{\xi\in\mu(f)}$ is a weak solution
  to~\eqref{eq:bes_nl2}, namely that for every divergence-free
  $\varphi\in C^\infty_c(O;\R^3)$ and every $\xi\in\R$,
  \[
  \mean\Bigl[\Bigl(\scal{\partial_t U,\varphi} - \nu\scal{U,
    \Delta\varphi} - \scal{U, \bigl((U+w)\cdot\nabla\bigr)\,\varphi} -
  \scal{w, (U\cdot\nabla)\,\varphi} \Bigr)\e^{\im\xi t}\Bigr] = 0,
  \]
  which is an easy consequence due to the
  bounds~\eqref{eq:bes_nl_unif3}.
\end{proof}
\subsubsection{Final considerations}
\label{sec:final_considerations}
Apparently the assumption~\eqref{eq:bes_phi1} seems to be essential
for the proof in the Besicovitch setting to work. The technical
problem is essentially related to the term
\[
\sum_\xi \sum_{\eta+\theta=\xi} U_\xi\cdot (V_\eta\cdot\nabla)W_\theta
\]
which is of order three, although all bounds on $U$, $V$, $W$ are of
order two, if one works in the framework of $\bes^2$ spaces. Young's
convolution inequalities tell us that in general there is no
possibility to bound the above term under these assumptions. In terms
of the time variable, we are trying to bound the Navier-Stokes
nonlinearity \emph{over the whole} $\R$.

Another possibility would be to use the other a-priori estimate,
namely the bound in $\bes^{1/2,2}(\R;L^2)$, which plays no role in the
proof of Theorem~\ref{thm:bes_nl}, using for instance the results in
Section~\ref{ss:meaning}. \emph{This possibility is ruled out by the
  non-linear term}. In fact, in the standard case of Leray-Hopf weak
solutions one has a better knowledge of the time derivative and this
can be used for instance with the Aubin-Lions compactness lemma to
handle the non linear term.

Indeed the non-linearity reads in Fourier variables (in time) as a
convolution and, whatever is the spectrum $\sigma(f)$ of the flux, the
spectrum of the solution to the non-linear problem will have the
$\Z$-module $\mu(f)$ as its spectrum. In different words, the
non-linearity creates a full set of harmonic resonances in the time
frequency. The structure of $\Z$-modules in $\R$ shows that the only
possibility to use a bound on the derivatives (while obtaining a
useful information for all times $t\in\R$) is the periodic case,
previously studied in~\cite{Bei2005c}. Indeed it is easy to verify the
following result.
\begin{proposition}
  Let $G\subset\R$ be a $\Z$-module. Then, either $G = \kappa\,\Z$ for some $\kappa\in\R$
  or $G$ is dense in $\R$.
\end{proposition}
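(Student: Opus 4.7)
The proposition is the classical dichotomy for additive subgroups of $\R$: every nontrivial $\Z$-submodule of $\R$ is either discrete (cyclic) or dense. My plan is the standard one based on the infimum of the positive part.

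First I would dispose of the trivial case $G=\{0\}$, which is already of the form $0\cdot\Z$. Assuming $G$ is nontrivial, since $G$ is closed under negation, the set $G^+ := G\cap(0,+\infty)$ is non-empty, and I set $\kappa := \inf G^+ \in [0,+\infty)$. The entire proof then splits on whether $\kappa>0$ or $\kappa=0$.

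If $\kappa>0$, I first show $\kappa\in G$. Otherwise, by definition of infimum, I could find $g_1,g_2\in G^+$ with $\kappa < g_2 < g_1 < \kappa + \tfrac12\kappa \cdot\varepsilon$ for arbitrarily small $\varepsilon$; then $g_1-g_2\in G^+$ and $0<g_1-g_2<\kappa$, contradicting the definition of $\kappa$. Once $\kappa\in G$, closure under $\Z$-combinations gives $\kappa\Z\subseteq G$. For the reverse inclusion, given any $g\in G$, Euclidean division of $g$ by $\kappa$ yields $g = n\kappa + r$ with $n\in\Z$ and $0\le r<\kappa$. Since $r = g - n\kappa\in G$ and $r<\kappa=\inf G^+$, we must have $r=0$, so $g\in\kappa\Z$. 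Hence $G=\kappa\Z$.

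If $\kappa=0$, I show $G$ is dense in $\R$. Fix $x\in\R$ and $\varepsilon>0$; by definition of infimum, there exists $g\in G^+$ with $0<g<\varepsilon$. Choosing $n := \lfloor x/g\rfloor\in\Z$ gives $ng\in G$ and $|x-ng|<g<\varepsilon$, so every real number is approximated arbitrarily well by elements of $G$, i.e.\ $G$ is dense.

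There is no real obstacle here—this is a textbook fact and each step is elementary; the only point requiring a little care is the argument that $\kappa\in G$ in the discrete case, where one must use a difference of two elements of $G^+$ close to $\kappa$ to derive the contradiction. The slight subtlety about terminology (a $\Z$-submodule of $\R$ is simply an additive subgroup, since scalar multiplication by $\Z$ is already encoded in the group operation) is worth a one-line remark at the start.
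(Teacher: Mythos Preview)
Your argument is the standard and correct proof of the classical dichotomy for additive subgroups of $\R$. The paper itself does not prove this proposition at all: it merely introduces it with the phrase ``it is easy to verify the following result'' and gives no argument, so there is nothing to compare against beyond confirming that your proof supplies exactly what the authors omit.

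One cosmetic point: in the case $\kappa>0$, the expression ``$\kappa + \tfrac12\kappa\cdot\varepsilon$ for arbitrarily small $\varepsilon$'' is awkward and the $\varepsilon$ plays no real role. It is cleaner to say: if $\kappa\notin G$, then by definition of infimum there are two distinct elements $g_1>g_2$ of $G^+$ in the interval $(\kappa,2\kappa)$, and then $g_1-g_2\in G^+$ with $0<g_1-g_2<\kappa$, a contradiction. This is clearly what you intend, but tightening the wording would remove any ambiguity.
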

\bibliographystyle{amsplain}

\end{document}